\newcommand*{\FF}{\mathbb{F}}
\newcommand*{\NN}{\mathbb{N}}
\newcommand*{\ZZ}{\mathbb{Z}}
\newcommand*{\RR}{\mathbb{R}}
\newcommand*{\PP}{\mathbb{P}}
\DeclareMathOperator*{\EE}{\mathbb{E}}
\newcommand*{\calE}{\mathcal{E}}
\newcommand*{\calF}{\mathcal{F}}
\newcommand*{\calO}{\mathcal{O}}
\newcommand*{\calU}{\mathcal{U}}
\newcommand*{\calV}{\mathcal{V}}
\newcommand*{\calX}{\mathcal{X}}
\newcommand*{\A}{\mathtt{A}}
\newcommand*{\1}{\mathbf{1}}
\newcommand*{\mb}[1]{\mathbf{#1}}
\newcommand*{\bOmega}{\mb{\Omega}}
\newcommand*{\st}{\,:\,}
\newcommand*{\ball}[3][\relax]{\mathrm{B}^{#1}(#2, #3)}
\newtheorem{lemma}{Lemma}[section]
\newtheorem{cor}[lemma]{Corollary}
\newtheorem{prop}[lemma]{Proposition}
\newtheorem{theorem}[lemma]{Theorem}
\newtheorem{conj}[lemma]{Conjecture}
\newtheorem{mainthm}{Theorem}
\theoremstyle{definition}
\newtheorem{example}[lemma]{Example}
\newtheorem{defn}[lemma]{Definition}
\DeclareMathOperator{\conv}{conv}
\DeclareMathOperator{\Prob}{Prob}
\DeclareMathOperator{\Unif}{Unif}
\DeclareMathOperator{\Hom}{Hom}
\DeclareMathOperator{\Sym}{Sym}
\DeclareMathOperator{\diam}{diam}
\DeclareMathOperator{\shent}{H}
\DeclareMathOperator{\h}{h}
\DeclareMathOperator{\f}{f}
\DeclareMathOperator{\press}{p}
\DeclareMathOperator{\Press}{P}
\DeclareMathOperator{\ann}{mod}
\DeclareMathOperator{\arctanh}{arctanh}
\let\mbf\mathbf
\newcommand*{\KS}{\mathrm{KS}}
\newcommand*{\edge}{\mathrm{edge}}
\newcommand*{\FB}{\mathrm{FB}}
\DeclareMathOperator{\lwlim}{lwlim}
\DeclarePairedDelimiter{\abs}{\lvert}{\rvert}
\newcommand{\nnorm}[1]{{\left\vert\kern-0.25ex\left\vert\kern-0.25ex\left\vert #1 
    \right\vert\kern-0.25ex\right\vert\kern-0.25ex\right\vert}}
\DeclarePairedDelimiterX{\inprod}[2]{\langle}{\rangle}{#1,\ #2}
\newcommand*{\good}{{good}}
\title{Equilibrium and nonequilibrium Gibbs states on sofic groups}
\author{Christopher Shriver}
\begin{document}
\maketitle

\begin{abstract}
	Recent work of Barbieri and Meyerovitch has shown that, for very general spin systems indexed by sofic groups, equilibrium (i.e. pressure-maximizing) states are Gibbs. The main goal of this paper is to show that the converse fails in an interesting way: for the Ising model on a free group, the free-boundary state typically fails to be equilibrium as long as it is not the \emph{only} Gibbs state. For every temperature between the uniqueness and reconstruction thresholds a typical sofic approximation gives this state finite but non-maximal pressure, and for every lower temperature the pressure is non-maximal over \emph{every} sofic approximation.
	
	We also show that, for more general interactions on sofic groups, the local limit of Gibbs states over a sofic approximation $\Sigma$, if it exists, is a mixture of $\Sigma$-equilibrium states. We use this to show that the plus- and minus-boundary-condition Ising states are $\Sigma$-equilibrium if $\Sigma$ is any sofic approximation to a free group. Combined with a result of Dembo and Montanari, this implies that these states have the same entropy over every sofic approximation.
\end{abstract}

\tableofcontents

\section{Overview}
Consider a shift system $\A^\Gamma$, where $\A$ is a finite set and $\Gamma$ is a countable sofic group acting on $\A^\Gamma$ by the ``shift'' action
	\[ (g \cdot \mb{x})_h = \mb{x}_{g^{-1} h} . \]
Suppose $u \colon \A^\Gamma \to \RR$ is a continuous function, which we think of as specifying for each $\mb{x} \in \A^\Gamma$ the ``specific energy at the identity $e \in \Gamma$.'' Also let $\h_\Sigma \colon \Prob^\Gamma(\A^\Gamma) \to [-\infty, \log\abs{\A}]$ denote the sofic entropy with respect to some sofic approximation $\Sigma$ to $\Gamma$.

We consider two standard notions of what it means for a probability measure $\mu \in \Prob^\Gamma(\A^\Gamma)$ to describe the state of a system in statistical equilibrium. First, we say that $\mu$ is a $\Sigma$-\emph{equilibrium state} if it maximizes the \emph{sofic pressure}
	\[ \press_\Sigma(\mu) = \h_\Sigma(\mu) - \int u\, d\mu. \] 
Definitions of similar quantities often include a factor of temperature or inverse temperature on one of the terms; we absorb this factor into the function $u$. See Section \ref{sec:definitions} for more precise definitions.

The second notion is that of a \emph{Gibbs state}. Briefly, a measure $\mu \in \Prob(\A^\Gamma)$ is a Gibbs state with respect to some potential if it has certain local conditional probabilities specified by that potential. The collection of these conditional probabilities is called a specification, or the DLR equations, after Dobrushin, Lanford, and Ruelle who initially developed the concept. They can be interpreted as a kind of local equilibrium criterion. We will not provide a precise definition of ``Gibbs state'' here; instead we will work with specific standard examples. For precise definitions see for example \cite{georgii2011}. When we say a measure ``is Gibbs'' or ``is a Gibbs state,'' we always mean with respect to a particular potential. 

When $\Gamma$ is amenable and the energy function $u$ is ``nice enough,'' the set of equilibrium states is the same as the set of Gibbs states. The first versions of this theorem are due to Dobrushin, Lanford, and Ruelle. For $\ZZ^d$-indexed finite-alphabet shift systems with $u$ defined by an ``absolutely summable potential,'' see \cite[Chapter 15]{georgii2011}. For general amenable groups, see \cite{tempelman1984}.

One direction of this equivalence has been extended to actions of sofic groups: Barbieri and Meyerovitch have recently shown that every equilibrium state is Gibbs as long as $\Gamma$ is sofic and $u$ satisfies a somewhat technical (but very general) criterion \cite{barbieri2022a}.

The most natural version of a converse would be that every Gibbs state is equilibrium for ``nice enough'' interactions. It is not clear what ``nice enough'' should mean exactly, but it is natural to assume it should include the Ising model: this is generally seen as the simplest nontrivial interaction. We will consider, then, whether every Gibbs state for an Ising model on the Cayley graph of a free group $\Gamma$ (\emph{i.e.} an infinite regular tree) must be an equilibrium state.

But, if the temperature is low enough, there exists a sofic approximation $\Sigma$ to the free group $\Gamma$ such that the free-boundary Ising state $\mu^{\FB}$ (which is Gibbs) has $\h_\Sigma(\mu^{\FB}) = -\infty$, so $\press_\Sigma(\mu^{\FB}) = -\infty$. Intuitively, this is because a good model for $\mu^{\FB}$ on a sofic approximation is a bipartition, which has very small cut density if the temperature is low. Therefore a typical sofic approximation does not admit good models for $\mu^{\FB}$. But any Bernoulli shift, for example, will have finite $\press_\Sigma(\cdot)$, so $\mu^{\FB}$ is a Gibbs state which is not $\Sigma$-equilibrium.

This sort of degenerate behavior of sofic entropy, where $\Sigma$ is simply incompatible with $\mu^{\FB}$, is well-known and does not provide a very satisfying counterexample. The same example can be used to show that there are two sofic approximations which give $\mu^{\FB}$ different sofic entropy values. But it is still an open question whether a measure can have different \emph{finite} sofic entropy values.

This suggests looking for a partial converse: that every Ising Gibbs state with \emph{finite} $\Sigma$-entropy is $\Sigma$-equilibrium. In this paper we show that this is not true. We show that for all temperatures below the uniqueness threshold (and no external field) the free-boundary state is $\Sigma$-non-equilibrium for some $\Sigma$ for which $\h_\Sigma(\mu^{\FB}) > - \infty$. For temperatures above the reconstruction threshold, this is true for a ``typical'' $\Sigma$.

Furthermore, we show that for low enough temperatures $\mu^{\FB}$ is nonequilibrium over \emph{every} $\Sigma$. This also rules out the possibility that some additional assumption on the sofic approximation could make the converse hold, or that every Gibbs state is equilibrium over \emph{some} sofic approximation.

Specifically, for free groups of rank at least 2, we establish that $\mu^{\FB}$ is non-equilibrium over \emph{typical} $\Sigma$ for all temperatures between the uniqueness and reconstruction thresholds (Theorem \ref{thm:existence}). The temperature being below either the reconstruction threshold or half the uniqueness threshold is a sufficient condition for $\mu^{\FB}$ to be non-equilibrium over \emph{every} $\Sigma$ (Theorem \ref{thm:noneq}); it is unclear whether this is true up to the uniqueness threshold. See Figure~\ref{fig:tempranges}. \\

\begin{figure}
\begin{center}
\includegraphics[width=\textwidth]{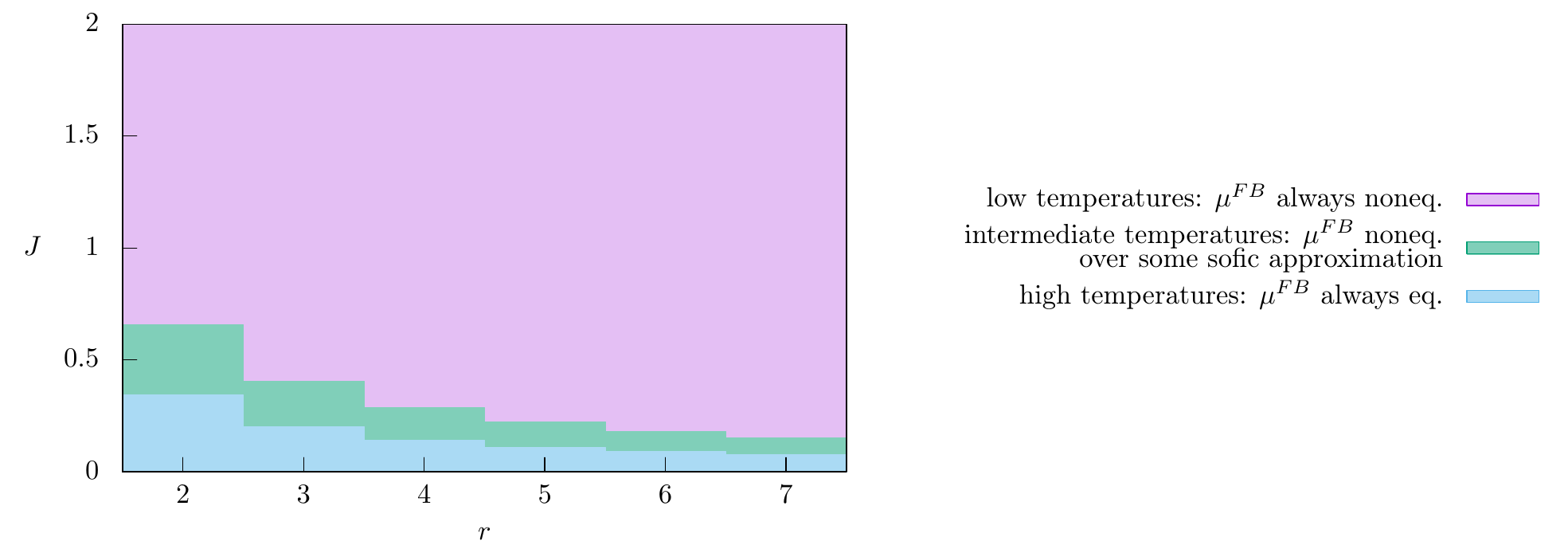}
\end{center}
\caption{Temperature ranges where equilibrium status of the free boundary state $\mu^{\FB}$ is established. The horizontal axis is $r$, the rank of the free group (and half the degree of the Cayley tree), and the vertical axis is inverse temperature or interaction strength. \\ In the upper region, $\mu^{\FB}$ is never equilibrium. In the middle region, it is nonequilibrium over some sofic approximations. In the lower region it is the only Gibbs state, so must be equilibrium.}
\label{fig:tempranges}
\end{figure}

We also develop a method for establishing that some states are equilibrium (Theorem \ref{thm:eqgeneral}). We use it (and \cite{montanari2012}) to show that the plus and minus states (with no external field) are equilibrium over every sofic approximation to a free group. Combined with \cite{dembo2010a}, this implies that the plus and minus Ising states on a free group have the same sofic entropy over every sofic approximation.

\subsection{Statements of main results}

The proof of Theorem~\ref{thm:existence}, that $\mu^{\FB}$ is nonequilibrium over a typical sofic approximation, is based on the following ``first moment'' calculation:

\begin{prop}
\label{prop:fnoneq}
	At every temperature below the uniqueness threshold, the free boundary state is $\f$-nonequilibrium.
\end{prop}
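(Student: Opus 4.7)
The plan is to prove the strict inequality $\press_\f(\mu^{\FB}) < \sup_\nu \press_\f(\nu)$ via a first-moment (annealed) calculation on the random $2r$-regular graph $G_n$ underlying $\f$. Write $Z_n(\beta) = \sum_\sigma e^{-H(\sigma)}$ for the Ising partition function on $G_n$. A Varadhan/Laplace-principle argument identifies
\[
\sup_{\nu} \press_\f(\nu) \;=\; \lim_{n \to \infty} \frac{1}{n} \log \EE_{G_n} Z_n(\beta),
\]
because $\EE Z_n$ decomposes as an exponential sum over empirical types: grouping configurations by the measure they induce, $\EE Z_n \approx \sum_\mu \EE N(G_n, \mu) \cdot e^{-n \int u\, d\mu}$, so the exponential rate of $\EE Z_n$ is $\max_\mu \press_\f(\mu)$.

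Next I would upper bound $\press_\f(\mu^{\FB})$ by the zero-magnetization sector of the annealed free energy. Because $\mu^{\FB}$ is invariant under the global spin flip and the external field vanishes, its expected magnetization is zero, so every sequence of good models must have empirical magnetization tending to zero. Restricting the first-moment sum to this sector gives, for every $\epsilon > 0$,
\[
\press_\f(\mu^{\FB}) \;\leq\; \limsup_{n} \frac{1}{n} \log \EE_{G_n} \sum_{\sigma:\, \lvert m(\sigma)\rvert \leq \epsilon} e^{-H(\sigma)} \;=:\; \phi_{\mathrm{ann}}(\beta, 0) + o_\epsilon(1),
\]
where $\phi_{\mathrm{ann}}(\beta, m)$ is the annealed log partition function restricted to empirical magnetization near $m$.

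Finally I would compare $\phi_{\mathrm{ann}}(\beta, 0)$ to $\max_m \phi_{\mathrm{ann}}(\beta, m)$. An explicit configuration-model / contingency-table calculation yields a closed-form expression for $\phi_{\mathrm{ann}}(\beta, m)$ as an entropy-plus-energy function of $m$, whose critical point $m = 0$ is a strict local minimum precisely when $(2r-1)\tanh\beta > 1$, i.e., below the tree uniqueness threshold. In this regime the maximizers are $\pm m^\ast \neq 0$, so $\phi_{\mathrm{ann}}(\beta, 0) < \max_m \phi_{\mathrm{ann}}(\beta, m) = \sup_\nu \press_\f(\nu)$. Chaining gives $\press_\f(\mu^{\FB}) < \sup_\nu \press_\f(\nu)$, which is exactly $\f$-nonequilibrium.

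The main obstacle is making the first step precise: identifying $\sup_\nu \press_\f(\nu)$ with the annealed log partition function requires a Laplace-principle statement that the mass of $\EE Z_n$ concentrates exponentially on a single empirical type, where one must be careful about the definition of ``good model'' (approximate local matching up to a tolerance, which is then sent to zero). The remaining steps are more routine: Step 2 reduces to flip symmetry of $\mu^{\FB}$, and Step 3 is a Hessian computation whose sign change at $(2r-1)\tanh\beta = 1$ is well known and coincides with the bifurcation of the tree Ising recursion.
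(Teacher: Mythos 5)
Your argument is correct, and the key computation in Step 3 does check out: writing the edge profile at magnetization $m$ as $(a,b,b,c)$ and optimizing over $b$, the cross-derivative of the Bethe functional in $(m,b)$ vanishes at the symmetric point, so by the envelope theorem the second derivative of $\phi_{\mathrm{ann}}(\beta,\cdot)$ at $m=0$ is the pure $m$-derivative, which comes out to $r(1-e^{-2J})-1$; this is positive exactly when $J>\tfrac12\log\tfrac{r}{r-1}=\arctanh\tfrac{1}{2r-1}=J_{uniq}(r)$. The route is genuinely different from the paper's, though both ultimately detect the same saddle by a Hessian sign change. The paper never invokes a Laplace principle or the identification $\sup_\nu\press_{\f}(\nu)=\lim_n\tfrac1n\log\EE Z_n$: it only needs to exhibit \emph{some} nearby measure of larger pressure, and does so along the explicit one-parameter family $\mu_t$ of completely homogeneous Markov chains from Georgii's parametrization, computing $\press_{\f}(\mu_t)$ in closed form via Bowen's Markov-chain formula for $\f$ and checking that $\tfrac{d^2}{dt^2}\big|_{t=0}\press_{\f}(\mu_t)=(\tanh J+1)[(2r-1)\tanh J-1]>0$. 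Your curve (magnetization with the edge marginal partially maximized) is a different curve through the same critical point. Note also that your Step 1, the only step you flag as delicate, is more than you need: the chain of inequalities only uses $\max_m\phi_{\mathrm{ann}}(\beta,m)\le\sup_\nu\press_{\f}(\nu)$, which follows by taking $\nu$ to be the Markov chain with the optimal marginals at $m^\ast$ and applying Bowen's formula, with no exponential-tightness or covering argument. What your framing buys is explicit contact with the first-moment computation on random regular graphs; what the paper's buys is brevity and self-containedness.
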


By ``$\f$-equilibrium'' we mean that a state maximizes the pressure where the entropy quantity is the $\f$ invariant. As shown in \cite{bowen2010}, the $\f$ invariant is the sofic entropy over a particular random sofic approximation: a sequence of uniformly random homomorphisms. In random graph terminology, this corresponds to the permutation model.

We prove Proposition~\ref{prop:fnoneq} using a natural one-parameter family $\{\mu_t \st t \in \RR\}$ of Markov chains in which $t=0$ gives the free boundary state. The uniqueness threshold is exactly where the second derivative of the $\f$-pressure at $t=0$ becomes positive. Since the free boundary state is a local minimum of the pressure within this family, it must be nonequilibrium.

This alone is not a satisfying counterexample because it is not obvious whether $\mu^{\FB}$ being $\f$-nonequilibrium is simply due to the degenerate behavior mentioned above: at low enough temperatures the $\f$ invariant, and hence the $\f$-pressure, is affected by the possibility of the random sofic approximation supporting no good models for $\mu^{\FB}$ at all. What we really want is a deterministic $\Sigma$ such that $\mu^{\FB}$ has finite $\h_\Sigma$ but is not $\Sigma$-equilibrium.

Moreover, Seward's ergodic decomposition formula for the $\f$-invariant \cite{seward2016} prevents any non-ergodic measure from being $\f$-equilibrium. If the ergodic decomposition of $\mu$ has Shannon entropy $\delta$, then (since the average specific energy is affine) $\press_{\f}(\mu)$ is $\delta$ less than the average of $\press_{\f}(\cdot)$ over the ergodic decomposition, so cannot be maximal. This is not a concern for $\mu^{\FB}$, since this state is ergodic at every temperature, but it is another reason why the converse can fail for somewhat degenerate reasons with random sofic approximations.

As long as the free-boundary state is tail-trivial (i.e. when the temperature is above the reconstruction threshold), we can use \cite{shriver2021} to show that a ``typical'' sofic approximation has the desired property:

\begin{mainthm}
\label{thm:existence}
	For every temperature between the uniqueness and reconstruction thresholds, for a ``typical'' sofic approximation $\Sigma$ the free boundary Ising state has finite $\Sigma$-entropy but is $\Sigma$-nonequilibrium.
	
	More precisely, there is a sequence of sets of homomorphisms $\{ S_n \subset \Hom(\FF_r, \Sym(n)) \}_{n=1}^\infty$ with $\PP(S_n) \to 1$ such that if $\Sigma = \{\sigma_n\}_{n=1}^\infty$ is a sequence of maps with $\sigma_n \in S_{m_n}$ for some $m_n \to \infty$ then
	\begin{enumerate}
		\item $\Sigma$ is a sofic approximation to $\FF_r$
		\item $\press_\Sigma(\mu^{\FB}) = \press_{\f}(\mu^{\FB}) < \press_{\f}(\mu^+) = \press_\Sigma(\mu^+)$
	\end{enumerate}
\end{mainthm}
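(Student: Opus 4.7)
The plan is to combine three ingredients: Proposition~\ref{prop:fnoneq} at the $\f$-invariant level, Theorem~\ref{thm:eqgeneral} (together with \cite{montanari2012}) to pin down $\mu^+$, and a quenched concentration theorem from \cite{shriver2021} to pass from the random to the typical deterministic setting. Proposition~\ref{prop:fnoneq} gives $\press_{\f}(\mu^{\FB}) < \sup_\nu \press_{\f}(\nu)$ throughout the temperature range considered. Theorem~\ref{thm:eqgeneral} combined with \cite{montanari2012} gives that $\mu^+$ is $\Sigma$-equilibrium for every sofic approximation, and in particular for the random-homomorphism approximation defining $\f$, so $\press_{\f}(\mu^+) = \sup_\nu \press_{\f}(\nu)$. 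Together these give the strict middle inequality $\press_{\f}(\mu^{\FB}) < \press_{\f}(\mu^+)$ needed in item~2.

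To upgrade this to a statement about typical deterministic $\Sigma$, I would use that in the target temperature range both $\mu^{\FB}$ (above reconstruction) and $\mu^+$ are tail-trivial, so the concentration result of \cite{shriver2021} applies to each. On a uniformly random $\sigma \in \Hom(\FF_r, \Sym(n))$, define three events: $E_n^{\mathrm{sofic}}$ that $\sigma$ satisfies the level-$n$ sofic-approximation condition, $E_n^{\FB}$ that $\h_\sigma(\mu^{\FB})$ is within $\epsilon_n$ of the corresponding $\f_n$-quantity, and analogously $E_n^+$ for $\mu^+$. Each event has probability tending to~$1$: the first by a standard random-permutation cycle count, the second and third by \cite{shriver2021} for $\epsilon_n$ decaying slowly enough. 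Set $S_n := E_n^{\mathrm{sofic}} \cap E_n^{\FB} \cap E_n^+$; by the union bound $\PP(S_n) \to 1$. By construction any $\Sigma = \{\sigma_n\}$ with $\sigma_n \in S_{m_n}$ and $m_n \to \infty$ satisfies item~1, and passing to the limit gives $\h_\Sigma(\mu^{\FB}) = \f(\mu^{\FB})$ and $\h_\Sigma(\mu^+) = \f(\mu^+)$ (the energy term is identical on both sides), hence the outer pressure equalities in item~2. The middle strict inequality was established in the first paragraph.

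The main obstacle is controlling the interplay between three limits: the shrinking weak-$*$ neighborhoods in the definition of sofic entropy, the vanishing tolerance $\epsilon_n$ in the concentration bound, and the convergence rate of $\f_n(\mu)$ to $\f(\mu)$. These must be chosen compatibly so that $\PP(S_n) \to 1$ survives while the limiting equalities $\h_\Sigma(\mu) = \f(\mu)$ are sharp rather than just approximate; concretely, one would likely pick a countable weak-$*$ basis, diagonalize over it, and let $\epsilon_n$ shrink strictly slower than any failure rate from \cite{shriver2021} on the $n$-th basis element. A secondary technical point is verifying that the hypotheses of Theorem~\ref{thm:eqgeneral} genuinely apply to $\mu^+$ throughout the uniqueness-to-reconstruction window, using \cite{montanari2012}'s analysis of the Ising reconstruction problem on regular trees as the key input.
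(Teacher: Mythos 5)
Your proposal follows essentially the same route as the paper: Proposition~\ref{prop:fnoneq} for the strict $\f$-pressure gap, Theorem~\ref{thm:eqgeneral} with \cite{montanari2012} to make $\mu^+$ equilibrium (hence $\f$-pressure-maximal), and tail-triviality plus a second-moment/concentration argument to extract typical deterministic $\Sigma$ with $\h_\Sigma = \f$ for both states, assembled by a union bound and diagonalization over shrinking neighborhoods exactly as in Proposition~\ref{prop:tailtrivialf}. The only caveat is that the quenched concentration you attribute wholesale to \cite{shriver2021} is actually proved here from scratch (switching-martingale concentration plus Paley--Zygmund), with \cite[Corollary 16]{shriver2021} supplying only the second-moment criterion $\f(\lambda)\leq 2\f(\mu)$ for self-joinings of tail-trivial Gibbs states.
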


The probability measure on $\Hom(\FF_r, \Sym(n))$ is the uniform distribution.

\emph{Remarks}:
\begin{enumerate}
	\item This result says something about the Boltzmann distribution over a typical large random regular graph: while there are microstates corresponding to the free-boundary state, the total probability of the set of these microstates is exponentially small at intermediate temperatures. At low enough temperatures, there are typically no free-boundary microstates. An analogous result for the Potts model is proven in \cite{coja-oghlan2023}. There, this phenomenon is called ``phase coexistence.''
	\item The main theorem of \cite{nam2022} implies that $\mu^{\FB}$ has good models over every sofic approximation, at all temperatures down to some multiple of the reconstruction threshold. In particular, the $\f$ invariant is unaffected by the possibility of having no good models. The second-moment method we use here instead applies over a wider range of temperatures, and also tells us that the $\Sigma$-pressure is exactly the $\f$-pressure for typical $\Sigma$.
\end{enumerate}

We then show that, for an overlapping range of temperatures, the free boundary state is never equilibrium over any sofic approximation:

\begin{mainthm}
\label{thm:noneq}
	For every temperature below either half the uniqueness threshold or below the reconstruction threshold, the free boundary state is $\Sigma$-nonequilibrium for every $\Sigma$.
\end{mainthm}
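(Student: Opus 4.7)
The strategy is to combine the paper's $\Sigma$-equilibrium result for $\mu^+$ with a uniform-in-$\Sigma$ large-deviation bound on the Ising Boltzmann mass of $\mu^{\FB}$-good configurations. From Theorem~\ref{thm:eqgeneral} applied via \cite{montanari2012} together with \cite{dembo2010a}, one obtains
\[ \press_\Sigma(\mu^+) = \press_{\f}(\mu^+) = \Press_\Sigma \quad \text{for every sofic } \Sigma. \]
Let $\Omega_\epsilon(\sigma_n)$ denote the set of configurations on $\sigma_n$ whose empirical measure lies within $\epsilon$ of $\mu^{\FB}$ in a weak$^*$ metric. Since $\sum_{\omega \in \Omega_\epsilon} e^{-n u_n(\omega)}$ agrees with $\abs{\Omega_\epsilon} \cdot e^{-n \int u\, d\mu^{\FB}}$ up to a factor of $e^{o(n)}$, comparison with the full partition function $Z_n$ gives
\[ \press_\Sigma(\mu^{\FB}) \;\leq\; \press_\Sigma(\mu^+) \;+\; \limsup_{n\to\infty} \tfrac{1}{n}\log\Prob_{\mathrm{Ising},\sigma_n}\!\left(\Omega_\epsilon\right). \]
The theorem therefore reduces to showing that $\Prob_{\mathrm{Ising},\sigma_n}(\Omega_\epsilon)$ decays exponentially in $n$, uniformly over sofic sequences $(\sigma_n)$, in each of the stated temperature ranges.

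Below half the uniqueness threshold, my approach is a second-moment calculation. Both $\EE[Z_n]$ and $\EE[Z_n \1_{\Omega_\epsilon}]$ over uniformly random $\sigma_n \in \Hom(\FF_r, \Sym(n))$ reduce to partition functions on a coupled Ising chain, which can be evaluated by transfer-matrix methods; their ratio is exponentially small precisely when a Kesten--Stigum-type spectral condition for the doubled chain holds, sharp at half the uniqueness threshold. A concentration argument --- an edge-switching martingale combined with Azuma --- upgrades this expected bound to an almost-sure bound, which then applies to every $\Sigma$ whose members lie in the (full-measure) generic set.

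Below the reconstruction threshold, the same exponential decay follows from Dembo--Montanari-type free-energy estimates: on any sofic approximation the Ising Boltzmann measure concentrates on configurations of magnetization near $\pm m$, so the near-zero-magnetization set $\Omega_\epsilon(\mu^{\FB})$ receives only exponentially small mass. The decay rate is controlled by the strictly positive Bethe free-energy gap $\press_{\f}(\mu^+) - \press_{\f}(\mu^{\FB})$.

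The main obstacle is obtaining the exponential decay uniformly in $\Sigma$, rather than only for typical $\Sigma$: the second-moment method naturally yields almost-sure statements, and converting these into statements valid for every sofic sequence relies on the ferromagnetic structure (correlation and monotonicity inequalities). Closing the gap between ``half the uniqueness threshold'' and the full uniqueness threshold seems to require tools beyond a direct second moment, in the spirit of the replica-symmetric analysis of \cite{coja-oghlan2023}.
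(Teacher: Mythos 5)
There is a genuine gap, and it is exactly at the point you flag as ``the main obstacle.'' Your entire quantitative machinery --- second moments of $Z_n\1_{\Omega_\epsilon}$ over uniformly random $\sigma_n\in\Hom(\FF_r,\Sym(n))$, followed by an Azuma/switching concentration argument --- can only ever produce statements about homomorphisms in a set of probability tending to $1$. That is precisely the content of Theorem~\ref{thm:existence} (``typical $\Sigma$''), and it cannot be upgraded to ``every $\Sigma$'': a sofic approximation is an arbitrary sequence of $(F,\delta)$-sofic maps and need not meet any prescribed high-probability set even infinitely often. The appeal to ``ferromagnetic correlation and monotonicity inequalities'' is not an argument here, and the reconstruction-threshold half of your plan has the same problem: the claim that the Boltzmann measure concentrates near magnetization $\pm m$ \emph{on any sofic approximation}, with rate controlled by the gap $\press_{\f}(\mu^+)-\press_{\f}(\mu^{\FB})$, presupposes that $\h_\Sigma(\mu^{\FB})\le\f(\mu^{\FB})$ for every deterministic $\Sigma$. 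No such inequality holds in general --- the paper's own example of a non-\good{} sofic approximation exhibits a measure with $\f$-entropy $-\infty$ but nonnegative entropy over some $\Sigma$ --- so an adversarial $\Sigma$ could a priori support far more free-boundary microstates than the annealed count predicts.

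What is missing is a \emph{deterministic, $\Sigma$-independent} upper bound on $\h_\Sigma(\mu^{\FB})$. The paper obtains one by restricting to a cyclic subgroup $\langle s_k\rangle\cong\ZZ$: any good model for $\mu$ over $\sigma_n$ restricts to a good model for the $\ZZ$-marginal $\mu_k$, whence $\h_\Sigma(\mu)\le\h^{\KS}(\mu_k)\le\shent^{\edge}(\mu_k)$ for every $\Sigma$. Comparing the resulting bound $\press^{\edge}(\mu^{\FB})$ with the pressure of the deterministic state $\delta_+$ (entropy $0$, energy $-Jr$ over every $\Sigma$) reduces the theorem to the explicit inequality $2Jr>\phi(1+e^{2J})-\phi(e^{2J})$, verified in both stated temperature ranges; no randomness enters anywhere. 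Your choice to compare against $\mu^+$ rather than $\delta_+$ is reasonable in spirit (the paper conjectures this yields better constants for large $r$), but you still need $\press_\Sigma(\mu^+)$ from below --- which Theorem~\ref{thm:isingeq} does supply --- \emph{and} a $\Sigma$-free bound on $\press_\Sigma(\mu^{\FB})$ from above, which your proposal never provides. It is also where the factor ``half'' actually comes from: it is an artifact of the edge-entropy bound versus $\delta_+$, not of a Kesten--Stigum condition for a doubled chain.
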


In the condition ``half the uniqueness threshold,'' the constant $\frac{1}{2}$ is not optimal. The hypothetical ideal constant would be $1$, but numerically it appears the the method used here can only achieve up to $\frac{1}{1.65} \approx 0.61$ . This is discussed further following the proof at the end of Section~\ref{sec:thmBproof}.

We also have a positive result showing that the plus and minus states are always equilibrium:

\begin{mainthm}
\label{thm:isingeq}
	With zero external field, the plus and minus Ising states are equilibrium over every sofic approximation to a free group. These two states are the only $\f$-equilibrium states.
\end{mainthm}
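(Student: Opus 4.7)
The plan is to prove the two assertions in turn, using Theorem~\ref{thm:eqgeneral} as the main engine for the first and a combination of Seward's formula and a Bethe-type variational argument for the second. Fix an arbitrary sofic approximation $\Sigma=\{\sigma_n\}$ to $\FF_r$. On each approximating Schreier graph $G_{\sigma_n}$ let $\nu_n$ be the Boltzmann measure for the zero-field Ising interaction, which is Gibbs by construction. The local convergence theory on random regular graphs, in the form of \cite{montanari2012}, identifies the local limit of $\nu_n$ along $\Sigma$ as the spin-flip-symmetric mixture $\tfrac{1}{2}\mu^{+}+\tfrac{1}{2}\mu^{-}$; above the uniqueness threshold this reduces to the unique Gibbs state and there is nothing to prove. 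Theorem~\ref{thm:eqgeneral} then realizes this limit as a mixture of $\Sigma$-equilibrium states. Because $\mu^{+}$ and $\mu^{-}$ are ergodic under $\Gamma$ and extremal among $\Gamma$-invariant Ising Gibbs states, the unique ergodic decomposition of $\tfrac{1}{2}\mu^{+}+\tfrac{1}{2}\mu^{-}$ is $\tfrac{1}{2}\delta_{\mu^{+}}+\tfrac{1}{2}\delta_{\mu^{-}}$, and affinity of the sofic pressure on the simplex of $\Gamma$-invariant measures then forces the ergodic components $\mu^{+}$ and $\mu^{-}$ to be $\Sigma$-equilibrium themselves.

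For the second assertion, apply Part~1 to the uniform-random-homomorphism sofic approximation, which computes the $\f$-invariant by \cite{bowen2010}, to deduce that $\mu^{+}$ and $\mu^{-}$ are $\f$-equilibrium. Conversely, let $\nu$ be any $\f$-equilibrium state. By the Barbieri-Meyerovitch result cited in the introduction, $\nu$ must be Ising Gibbs. By Seward's ergodic decomposition formula for the $\f$-invariant \cite{seward2016}, as recalled in the discussion following Proposition~\ref{prop:fnoneq}, a non-ergodic invariant measure has strictly sub-maximal $\f$-pressure, so $\nu$ is ergodic. Among ergodic $\Gamma$-invariant Ising Gibbs states, Proposition~\ref{prop:fnoneq} excludes $\mu^{\FB}$, and I would exclude any remaining candidate using the Bethe-type variational principle behind \cite{montanari2012}: the maximal $\f$-pressure coincides with the Bethe free energy, whose maximizers among translation-invariant message-passing fixed points are exactly the ones corresponding to $\mu^{+}$ and $\mu^{-}$.

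The main obstacle I anticipate is the last step of Part~2: the structure of ergodic $\Gamma$-invariant Gibbs states for the free-group Ising model at low temperatures is intricate, and ruling out states other than $\mu^{\FB}, \mu^{+}, \mu^{-}$ likely requires either an explicit identification of the extremal invariant Gibbs states together with a direct evaluation of $\f$ on each, or a uniform upper bound on $\press_{\f}$ in terms of the Bethe free energy. A more minor subtlety already inside Part~1 is verifying that the ergodic components of a $\Sigma$-equilibrium state are themselves $\Sigma$-equilibrium; this should follow from affinity of sofic entropy on the simplex of $\Gamma$-invariant measures, but deserves explicit attention since this affinity is more delicate for sofic than for amenable entropy.
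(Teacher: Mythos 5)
Your skeleton for the first assertion matches the paper's up to the last step, but that last step rests on a claim that is false in this setting. From Theorem~\ref{thm:eqgeneral} and \cite{montanari2012} you correctly get that some convex combination $t\mu^{+}+(1-t)\mu^{-}$ is $\Sigma$-equilibrium; to pass from this mixture to its ergodic components you invoke ``affinity of the sofic pressure on the simplex of $\Gamma$-invariant measures.'' Sofic entropy is not affine (nor even concave) in general: over an expander sequence every non-ergodic measure has entropy $-\infty$, and the paper devotes a definition (``good'' sofic approximations) and an explicit counterexample to precisely this failure. Since $\Sigma$ here is arbitrary, you cannot assume goodness. The paper avoids the issue entirely with a measurable sign-flip map $\Phi$ (the identity on a shift-invariant set carrying $\mu^{+}$, global spin flip elsewhere), which pushes any convex combination of $\mu^{+},\mu^{-}$ forward to $\mu^{+}$; being finite-to-one it does not decrease sofic entropy, and by symmetry it preserves energy, so it does not decrease pressure. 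That factor-map argument, not affinity, is what upgrades ``some mixture is equilibrium'' to ``$\mu^{+}$ is equilibrium.''

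For the second assertion, the step you flag as the main obstacle is indeed where the proposal is incomplete, and the gesture toward a ``Bethe-type variational principle'' does not supply the missing argument. The paper's route is: (i) by \cite[Theorem 11.1]{bowen2010c}, replacing $\mu$ by its Markov approximation weakly increases $\f$, strictly unless $\mu$ is already a Markov chain, while the Ising energy depends only on vertex and edge marginals; hence any $\f$-equilibrium state is a tree-indexed Markov chain, and a further term-by-term comparison in the formula for $\press_{\f}$ forces it to be completely homogeneous; (ii) $\f$-equilibrium states are Gibbs by \cite{barbieri2022a}; (iii) the completely homogeneous Markov-chain Gibbs states are exactly $\mu^{\FB},\mu^{+},\mu^{-}$ by \cite[Section 12.2]{georgii2011}; (iv) $\mu^{\FB}$ is excluded by Proposition~\ref{prop:fnoneq}. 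Your appeal to Seward's formula \cite{seward2016} to restrict to ergodic states is fine but does not substitute for step (i): there are many ergodic invariant Gibbs states at low temperature that are not Markov chains, and without the Markov-approximation monotonicity of $\f$ you have no way to exclude them. As written, the second part of the proposal is a plan rather than a proof.
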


This is a consequence of a more general result (Theorem \ref{thm:eqgeneral}) which shows that, for any continuous energy function $u$, the local limit of finitary Gibbs states over a sofic approximation $\Sigma$ (if it exists) is a mixture of $\Sigma$-equilibrium measures.

Theorem \ref{thm:isingeq} implies uniqueness of sofic entropy for the plus/minus states:

\begin{cor}
	With zero external field, the plus and minus Ising states have the same sofic entropy over every sofic approximation.
\end{cor}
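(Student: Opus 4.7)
The plan is to derive the corollary directly from Theorem~\ref{thm:isingeq}, using the $\ZZ/2$ spin-flip symmetry of the zero-field Ising interaction. Fix an arbitrary sofic approximation $\Sigma$ to the free group. By Theorem~\ref{thm:isingeq} both $\mu^+$ and $\mu^-$ are $\Sigma$-equilibrium states, so they achieve the common value $\sup_\nu \press_\Sigma(\nu)$. In particular,
\[ \h_\Sigma(\mu^+) - \int u\, d\mu^+ \;=\; \press_\Sigma(\mu^+) \;=\; \press_\Sigma(\mu^-) \;=\; \h_\Sigma(\mu^-) - \int u\, d\mu^- . \]

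Next I would exploit the involution $\phi\colon \{-1,+1\}^\Gamma \to \{-1,+1\}^\Gamma$ defined by $(\phi(\mb{x}))_g = -\mb{x}_g$. It commutes with the shift action, and because the interaction is nearest-neighbor Ising with no external field, the specific energy at the identity is a sum of products $\mb{x}_e \mb{x}_g$ and is therefore fixed by $\phi$, so $u\circ\phi = u$. The plus and minus Gibbs states are exchanged by $\phi$: $\phi_* \mu^+ = \mu^-$. Hence
\[ \int u\, d\mu^- \;=\; \int u\circ\phi\, d\mu^+ \;=\; \int u\, d\mu^+ . \]
Substituting this into the previous display yields $\h_\Sigma(\mu^+) = \h_\Sigma(\mu^-)$, and since $\Sigma$ was arbitrary this proves the corollary.

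I do not expect any real obstacle: once Theorem~\ref{thm:isingeq} is in hand, the corollary is pure bookkeeping. All the substantive work — in particular the inputs from \cite{montanari2012} and \cite{dembo2010a} used to identify $\mu^+$ and $\mu^-$ as $\Sigma$-equilibrium states, and the general equilibrium-from-Gibbs mechanism of Theorem~\ref{thm:eqgeneral} — is absorbed into Theorem~\ref{thm:isingeq}. The role of the spin-flip symmetry is only to equate the two energy integrals; everything else is definitional.
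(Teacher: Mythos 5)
Your argument is internally correct, but it proves only the weaker of the two readings of the corollary: that for each \emph{fixed} $\Sigma$ one has $\h_\Sigma(\mu^+)=\h_\Sigma(\mu^-)$. The point of the corollary in this paper — as signalled by the abstract's phrase ``combined with a result of Dembo and Montanari'' and by the surrounding discussion of the open question of whether a single measure can have different finite sofic entropy values — is the stronger statement that the common value $\h_\Sigma(\mu^+)=\h_\Sigma(\mu^-)$ does not depend on $\Sigma$ at all. Your proof never compares two different sofic approximations, so it cannot reach that conclusion; if you only needed the fixed-$\Sigma$ equality, the citation to \cite{dembo2010a} would be superfluous, which is a sign that something is missing.

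The missing ingredient is exactly \cite[Theorem 2.4]{dembo2010a}: the sofic \emph{topological} pressure $\press_\Sigma$ is the same for every deterministic sofic approximation $\Sigma$. Since Theorem~\ref{thm:isingeq} says $\mu^+$ and $\mu^-$ attain the supremum in Chung's variational principle for every $\Sigma$, their pressure $\press_\Sigma(\mu^\pm)$ equals this $\Sigma$-independent constant; and since $u(\mu^\pm)$ is a property of the measures alone (no $\Sigma$ appears in it), subtracting it gives that $\h_\Sigma(\mu^\pm)=\press_\Sigma(\mu^\pm)+u(\mu^\pm)$ is also $\Sigma$-independent. Your spin-flip observation $u(\mu^+)=u(\mu^-)$ is correct and is still needed to equate the entropies of the two states with each other, but you should add the Dembo--Montanari step to get independence of the sofic approximation.
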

\begin{proof}
	\cite[Theorem 2.4]{dembo2010a} shows that the sofic topological pressure is the same over any deterministic sofic approximation. Since, by Theorem \ref{thm:isingeq}, the plus and minus states attain the supremum in Chung's variational principle \cite{chung2013} for every sofic approximation, they always have the same sofic pressure. Since the energy is independent of the sofic approximation, so is the sofic entropy.
\end{proof}

\subsection{Some related work}
The paper \cite{burton1995}, written before the introduction of sofic entropy, show that ``Gibbs'' and ``equilibrium'' are not equivalent on trees when entropy is defined by the limit of normalized Shannon entropies of marginals on finite subtrees. They show that for some interactions there is a unique equilibrium state, which is not Gibbs, and for the Ising model with zero external field the free boundary state is the unique equilibrium measure. This is very different from what happens with sofic entropy.

The earlier paper \cite{follmer1977} also noted that Gibbs states on trees can fail to minimize relative entropy density, and introduced an ``inner'' relative entropy density for which the variational principle does hold.

The question is worth revisiting in the context of sofic entropy: as the authors of \cite{burton1995} point out, the source of some of the problems with trees is their nonamenability. Sofic entropy solves some of these difficulties: most famously, it is a measure conjugacy invariant \cite{bowen2010a, bowen2010b}. In the context of statistical physics, sofic pressure is nondecreasing under Glauber dynamics \cite{shriver2022a} and, as mentioned above, it is generally true with sofic entropy that equilibrium measures are Gibbs \cite{barbieri2022a}.

\subsubsection{Related results on the Potts model}
Recent independent work of Coja-Oghlan et al. \cite{coja-oghlan2023} has established analogous behavior for the Potts model on random regular graphs, but the terminology is different. In this section we compare the present work to \cite{coja-oghlan2023}.

The result closest to the present paper is ``phase coexistence'': a typical large random regular graph has some labelings which correspond to the free-boundary state (which they call paramagnetic) but exponentially fewer than the number which correspond to constant-boundary-conditions states (which they call ferromagnetic). The respective exponential growth rates are Bethe free energies of the states -- in this case, this is the same as the $\f$ invariant. Their correspondence between labelings of a finite graph and infinite-volume measures is slightly different form ours: they only require the labelings to use individual colors with the correct frequencies, while we require the average local statistics to match. However, in the temperature range of interest (non-reconstruction/tail-triviality), most of the labelings with the correct color frequencies should be good models for the correct Markov chains, so this should not make a difference. This is because, as discussed below, Markov chains maximize the $\f$ invariant and tail-triviality implies that the $\f$ invariant is the typical entropy value (for Gibbs states of nearest-neighbor interactions -- see Proposition~\ref{prop:tailtrivialf}).

There is also an interesting connection with the main metastability result of \cite{coja-oghlan2023}. Their phase coexistence result is proven by using ``strong non-reconstruction'' to avoid involved second-moment optimization. They then use that the paramagnetic state is a local \emph{maximizer} of the Bethe free energy to argue that there is an exponentially small bottleneck keeping Glauber dynamics near paramagnetic microstates for an exponentially long amount of time. We also use non-reconstruction via Proposition~\ref{prop:tailtrivialf} to avoid doing a second-moment optimization by hand. This result is based on an earlier approach from \cite{shriver2021} that used a weaker but more general metastability result for Glauber evolutions of Gibbs microstates. This does not require the Gibbs states to be local maximizers of the pressure, but it only shows that Glauber dynamics is trapped for an amount of time which grows linearly in the size of the graph (or, in continuous-time Glauber dynamics, for a constant amount of time). The metastability result of \cite{shriver2021} is also more general in that it applies to arbitrary locally tree-like graphs, while \cite{coja-oghlan2023} relies on a counting estimate which holds for typical random graphs.

Note that the Ising free boundary state, which we study in the present paper, is actually a local \emph{minimum} of the $\f$-pressure within the parametrization $\mu_t$ (Proposition~\ref{prop:fnoneq}) so the bottleneck metastability approach of \cite{coja-oghlan2023} will not work. Moreover, the coupling argument for ``strong non-reconstruction'' \cite[Proposition 2.6]{coja-oghlan2023} fails for the Ising model outside the uniqueness regime, so the ``phase coexistence'' result of the present paper (Theorem~\ref{thm:existence}) does not simply follow from \cite{coja-oghlan2023} by taking $q=2$.

See Figures~\ref{fig:pottspressure}, \ref{fig:pottspressure2} for plots of the $\f$-pressures of the families of Markov chains which arise for the Potts model.

We also emphasize that neither the present paper nor \cite{coja-oghlan2023} seems able to completely address phase coexistence when the free-boundary state is reconstructible. Our Theorem~\ref{thm:noneq} states that, for all temperatures below the reconstruction threshold, the set of free-boundary microstates is given exponentially small weight by the Boltzmann measure on any locally tree-like graph. But we do not know whether this weight is nonzero for a typical random regular graph. At low enough temperatures it must be zero (since free-boundary microstates would give bisections with very small cut density, which don't typically exist \cite{dembo2017}), but it seems to be unknown what happens just below the reconstruction threshold.

\begin{figure}
\begin{center}
	\includegraphics{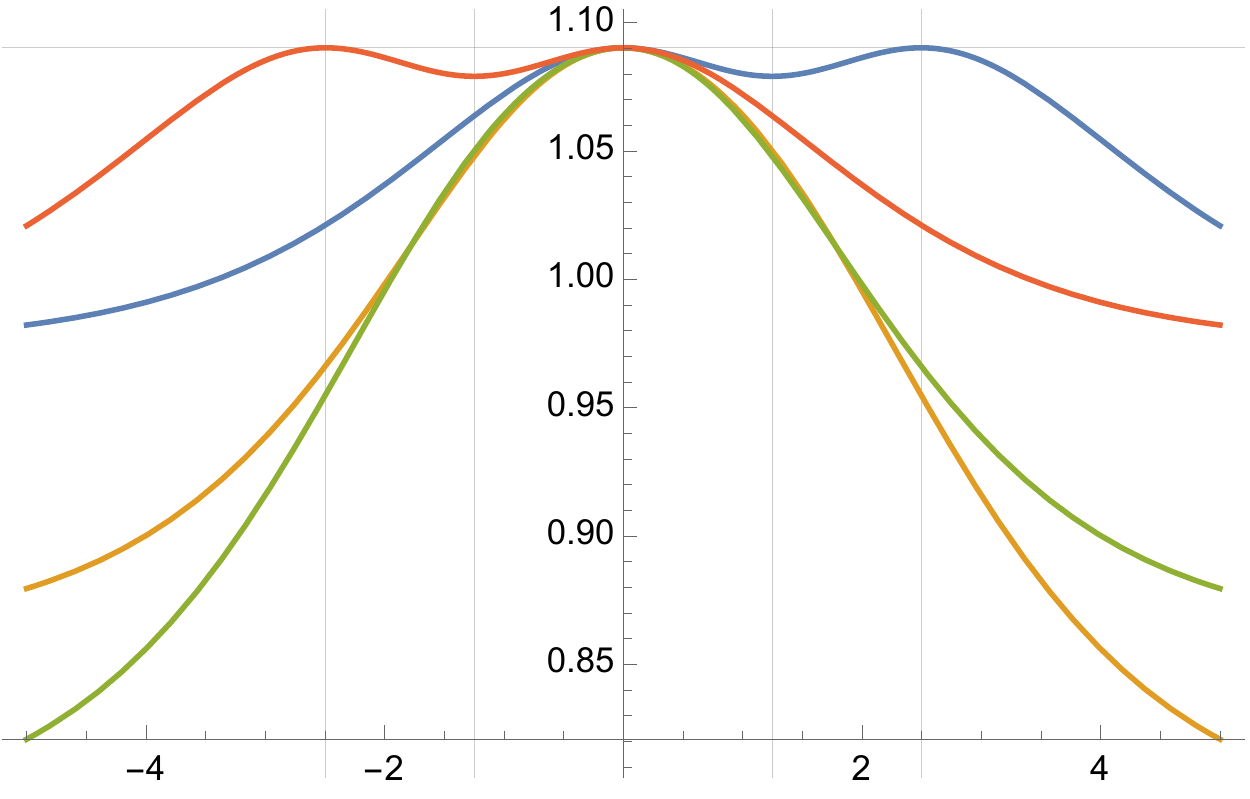}
\end{center}
\caption{For $q=5$, ignoring most permutations of the colors, there are $q-1=4$ families of Markov chains arising from the belief propagation equations, analogous to the single family $\mu_t$ defined in Section~\ref{sec:isingintro} below. For every family, $t=0$ corresponds to the free boundary (paramagnetic) state. The plot shows the $\f$-pressures of these four families at the value of $J$ where the free boundary state is overtaken by the constant boundary conditions (ferromagnetic/ordered) states. Note that there are also local minima of the pressure here: these correspond to more Markov chains which are Gibbs states, and hence have metastable microstates in the sense of \cite{shriver2021}.}
\label{fig:pottspressure}
\end{figure}
\begin{figure}
\begin{center}
	\includegraphics{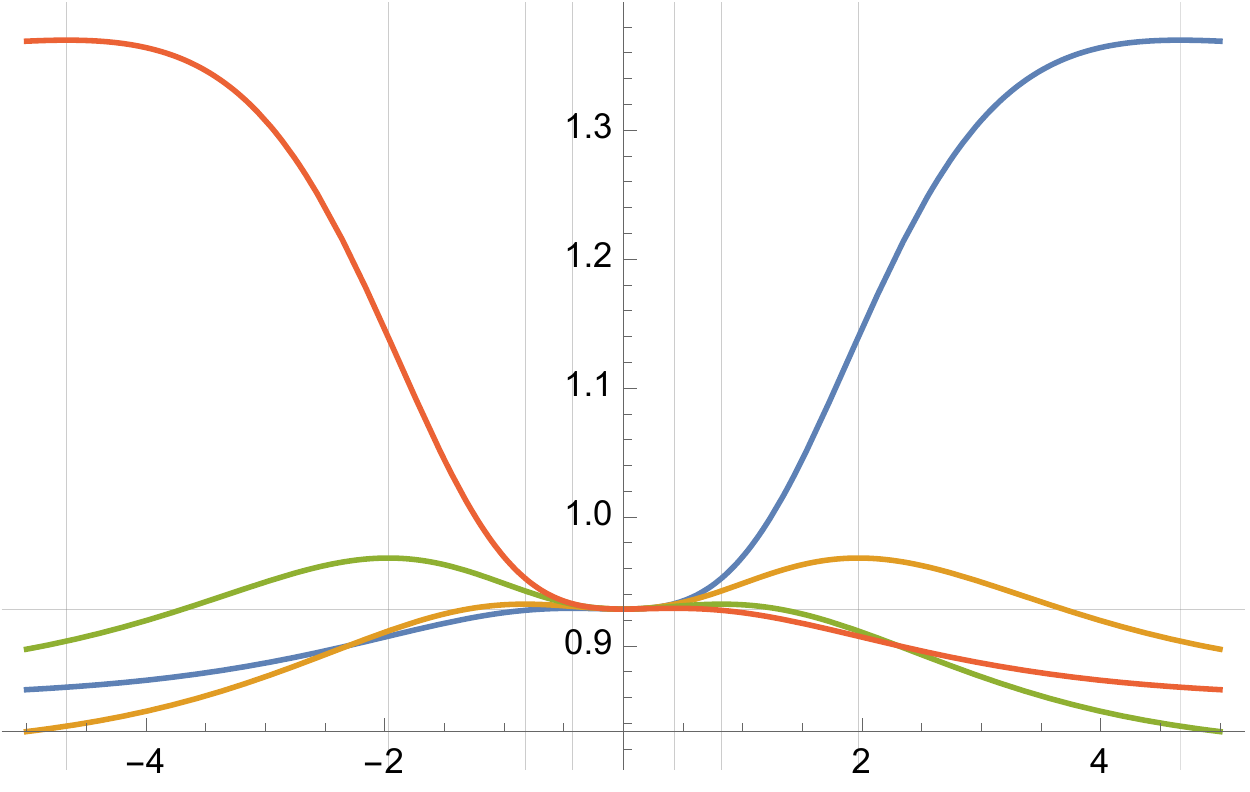}
\end{center}
\caption{At higher values of $J$, the other families contain additional Markov Gibbs states. This behavior is investigated further in \cite{kulske2014}. In the plot, these states are marked by vertical lines and coincide with critical points of the pressure.}
\label{fig:pottspressure2}
\end{figure}

\subsection{Acknowledgements}
This material is based on work completed at the University of Texas at Austin while supported by NSF Grant DMS 1937215.

The author thanks Tim Austin, Sebasti\'{a}n Barbieri, Lewis Bowen, Raimundo Brice\~{n}o, and Lorenzo Sadun for helpful conversations. Thanks to Tim Austin and Lewis Bowen for many helpful comments on earlier drafts.

Plots were produced using gnuplot and Mathematica.

\section{Definitions}
\label{sec:definitions}
Although the main results are stated for the Ising model on a tree, some results below (for example, Theorem~\ref{thm:eqgeneral}) are proven in greater generality. Let $\Gamma$ be a countable sofic group and $\A$ be a finite set.


\subsection{Sofic entropy and sofic groups}
We give a brief summary here to establish notation. The concept of sofic entropy is originally due to Lewis Bowen \cite{bowen2010b} but we follow the notation of \cite{austin2016} for shift systems.

If $V$ is a finite set, given $\mb{x} \in \A^V$, $v \in V$, $\sigma \colon \Gamma \to \Sym(V)$ we define the \emph{pullback name} $\Pi_v^\sigma \mb{x} \in \A^\Gamma$ by
	\[ \Pi_v^\sigma \mb{x} (\gamma) = \mb{x}\big( \sigma(\gamma) (v) \big) . \]
The \emph{empirical distribution} of $\mb{x}$ over $\sigma$ is
	\[ P_{\mb{x}}^\sigma = \big( v \mapsto \Pi_v^\sigma \mb{x} \big)_* \Unif(V) = \frac{1}{\abs{V}} \sum_{v \in V} \delta_{\Pi_v^\sigma \mb{x}} . \]
If $\calO$ is a weak*-open subset of $\Prob(\A^\Gamma)$, we define
	\[ \Omega(\sigma, \calO) = \{ \mb{x} \in \A^V \st P_{\mb{x}}^\sigma \in \calO \} . \]
If $\calO$ is a small neighborhood of some $\mu$ then we can think of any $\mb{x} \in \Omega(\sigma, \calO)$ as a ``good model'' for $\mu$ over $\sigma$ in that its ``local statistics'' are close to $\mu$.

If $\Sigma = \big( \sigma_n \colon \Gamma \to \Sym(V_n) \big)_{n \in \NN}$ is a sequence of maps then the sofic entropy of $\mu \in \Prob(\A^\Gamma)$ over $\Sigma$ is defined by
	\[ \h_\Sigma(\mu) = \inf_{\calO \ni \mu} \limsup_{n \to \infty} \frac{1}{\abs{V_n}} \log \abs*{\Omega(\sigma_n, \calO)} . \]
This is the (upper) exponential growth rate of the number of good models for $\mu$ over $\Sigma$.

Given $F \Subset \Gamma$ and $\delta > 0$, a map $\sigma \colon \Gamma \to \Sym(V)$ is called $(F,\delta)$-\emph{sofic} if
	\[ \frac{1}{\abs{V}} \abs*{\big\{ v \in V \st \sigma(\gamma)(v) \ne v \ \forall \gamma \in F \setminus\{1_\Gamma\} \big\}}  > 1-\delta \quad \text{(almost free)}\]
and
	\[ \frac{1}{\abs{V}} \abs*{\big\{ v \in V \st \sigma(\gamma_1 \gamma_2)(v) = \sigma(\gamma_1)\big(\sigma(\gamma_2)(v)\big) \ \forall\gamma_1, \gamma_2 \in F \big\}}  > 1-\delta \quad \text{(almost action)}. \]
A sequence $\Sigma = \{\sigma_n \colon \Gamma \to \Sym(V_n)\}_{n=1}^\infty$ is a sofic approximation (to $\Gamma$) if $\abs{V_n} \to \infty$ and for every $F,\delta$, the map $\sigma_n$ is $(F,\delta)$-sofic for all large enough $n$.

A countable group $\Gamma$ is called sofic if there is at least one sofic approximation to $\Gamma$.

If $\Sigma$ is a sofic approximation then $\h_\Sigma(\cdot)$ is a measure-conjugacy invariant \cite{bowen2010b}, and $\h_\Sigma(\mu) = -\infty$ if $\mu$ is not invariant under the shift action of $\Gamma$.

\subsection{Random sofic approximations, $\f$-entropy}
Suppose we have a sequence $\Sigma = (\sigma_n)_{n \in \NN}$ of \emph{random} maps $\sigma_n \colon \Gamma \to \Sym(V_n)$. Then $\Sigma$ is called a (random) sofic approximation if for any $(F,\delta)$ and any $c>0$ we have 
	\[ \PP( \sigma_n \text{ is } (F,\delta) \text{-sofic}) > 1 - e^{-cn} \]
for all large enough $n$. The sofic entropy of $\mu$ over a random $\Sigma$ is
	\[ \h_\Sigma(\mu) = \inf_{\calO \ni \mu} \limsup_{n \to \infty} \frac{1}{\abs{V_n}} \log \EE \abs*{\Omega(\sigma_n, \calO)} . \]
The expectation is taken over the random $\sigma_n$. 

In the case of a free group $\Gamma = \FF_r = \langle s_1, \ldots, s_r \rangle$, one natural choice is to let $\sigma_n$ be a uniformly random \emph{homomorphism} in $\Hom(\Gamma,\Sym(n))$. We call this sequence, which is a random sofic approximation, $\f$. In \cite{bowen2010} it is shown that $\h_{\f}(\mu)$ is equal to the ``$\f$ invariant'' $\f(\mu)$, a measure-conjugacy invariant for free-group actions introduced in \cite{bowen2010a}. For general systems, it should be noted that the two are equal only when there is a finite-entropy generating partition, but this is satisfied for finite-alphabet shift systems. More recently, $\f(\mu)$ has sometimes been called ``RS entropy'' (where RS stands for ``replica-symmetric'') or ``annealed entropy'' to emphasize analogy with related ideas in statistical physics \cite{bowen2019,backhausz2022}.

Still in the case of a free group, if $\mu \in \Prob^\Gamma(\A^\Gamma)$ is a Markov chain then there is an easy-to-compute formula for $\f(\mu)$. 
For $\gamma \in \Gamma$ let $\pi_\gamma \colon \A^\Gamma \to \A$ denote projection onto the $\gamma$ coordinate. Then, as shown in \cite{bowen2010c},
	\[ \f(\mu) = (1-r) \shent_\mu (\pi_e) + \sum_{i = 1}^r \shent_\mu (\pi_e \mid \pi_{s_i}) .  \]

\subsection{Equilibrium measures and pressure}

We sometimes want to work with measures supported on some closed, shift-invariant subset $\calX \subset \A^\Gamma$, called a \emph{subshift}. For example, independent sets can be considered as labelings of $\Gamma$ by elements of $\{0,1\}$ such that no two $1$'s are adjacent (in the Cayley graph for some fixed generating set). 

The same approach to sofic pressure on a subshift appears in \cite[Section 3.4]{barbieri2022a}.

Let $u \colon \calX \to \RR$ be a continuous function. This will play the role of a ``specific energy'' or ``energy per site'' functional without having to perform a spatial average (which is not available for nonamenable groups).

Since $\A^\Gamma$, and hence $\calX$, is compact, $u$ has minimum and maximum values $u^{\min}$ and $u^{\max}$.

For $\mu \in \Prob^\Gamma(\calX)$, we define the \emph{energy density} of $\mu$ by
	\[ u(\mu) = \int_{\calX} u(x)\, \mu(dx) . \]
	
The \emph{pressure} of $\mu$ over some (random or deterministic) sofic approximation $\Sigma$ is then defined by
	\[ \press_\Sigma(\mu) = \h_\Sigma(\mu) - u(\mu). \]
	
Any $\mu$ maximizing the pressure over $\Sigma$ is called $\Sigma$-equilibrium. At least one such $\mu$ always exists by semicontinuity, but in general there can be many.

We will also need to work with pressure for measures on finite ``model spaces'' $\A^{V}$, but this requires dealing with a small issue: if $\sigma \colon \Gamma \to \Sym(V)$ is any map, then we would want to define the total energy of a configuration $\mb{x} \in \A^V$ by summing up the energy contributions from each vertex, each of which can be determined by pulling back to a configuration on $\Gamma$:
	\[ U^\sigma(\mb{x}) = \sum_{v \in V} u(\Pi_v^\sigma \mb{x}) = \abs{V} \cdot \int_{\calX} u(x)\, P_{\mb{x}}^\sigma(dx) . \]
In the case of the Ising model, this is fine. But in general we cannot be sure that every pullback name $\Pi_v^\sigma \mb{x}$ is in the subshift $\calX$ (i.e. the domain of $u$). So we need to pick a continuous extension $\tilde{u} \colon \A^\Gamma \to \RR$ and define
	\[ \tilde{U}^\sigma(\mb{x}) = \sum_{v \in V} \tilde{u}(\Pi_v^\sigma \mb{x}) = \abs{V} \cdot \int_{\A^\Gamma} \tilde{u}(x)\, P_{\mb{x}}^\sigma(dx) . \]
If $\mb{x}$ is a good model for some $\mu \in \Prob^\Gamma(\calX)$, then $\frac{1}{\abs{V}}\tilde{u}(\mb{x}) \approx \tilde{u}(\mu) = u(\mu)$.

For $\zeta \in \Prob(\A^V)$, we then define the average energy
	\[ \tilde{U}^\sigma(\zeta) = \int_{\A^V} \tilde{U}^\sigma(\mb{x}) \, \zeta(d\mb{x}) = \abs{V} \cdot \int_{\A^\Gamma} \tilde{u}(\mb{x})\, P_\zeta^\sigma(d\mb{x}) \]
and the pressure
	\[ \widetilde{\Press}^\sigma(\zeta) = \shent(\zeta) -  \tilde{U}^\sigma(\zeta). \]
It is straightforward to show using Lagrange multipliers that the unique $\zeta$ maximizing $\widetilde{\Press}^\sigma(\cdot)$ is the measure $\tilde{\xi}_\sigma$ with $\tilde{\xi}_\sigma\{\mb{x}\} = \tilde{Z}_\sigma^{-1} \exp(-\tilde{U}^\sigma(\mb{x}))$, where the normalizing constant $\tilde{Z}_\sigma = \sum_{\mb{x}} \exp(-\tilde{U}^\sigma(\mb{x}))$ is called the \emph{partition function}. It is also straightforward to check that $\widetilde{\Press}^\sigma(\xi_\sigma) = \log \tilde{Z}_\sigma$.

But this $\tilde{\xi}_\sigma$ may depend on the choice of extension $\tilde{u}$, and it may not be supported on good models for measures on $\calX$. To fix this, given an open neighborhood $\calU \supset \Prob^\Gamma(\calX)$ let $\tilde{\xi}_\sigma^{\calU}$ be the measure supported on $\Omega(\sigma, \calU)$ with maximal $\tilde{P}^\sigma$. Again using Lagrange multipliers, we get that
	\[ \tilde{\xi}_\sigma^\calU \{\mb{x}\} = \tilde{Z}_{\sigma,\calU}^{-1} \exp(-\tilde{U}^\sigma(\mb{x})) \1_{\mb{x} \in \Omega(\sigma, \calU)} \]
where
	\[ \tilde{Z}_{\sigma,\calU} = \sum_{ \mb{x} \in \Omega(\sigma, \calU)} \exp(-\tilde{U}^\sigma(\mb{x})) . \]

The \emph{topological pressure} in this setting can be defined by
	\[ \press_\Sigma = \inf_{\calU \supset \Prob^\Gamma(\calX)} \limsup_{n \to \infty} \frac{1}{n} \log \EE_{\sigma_n} \tilde{Z}_{\sigma_n,\calU} . \]
The infimum over neighborhoods $\calU$ of $\Prob^\Gamma(\calX)$ eliminates the dependence on the extension $\tilde{u}$.

The sofic, deterministic version of this quantity was introduced in \cite{chung2013}, where it was also shown that, for deterministic $\Sigma$,
	\[ \press_{\Sigma} = \sup \{ \press_\Sigma(\mu) \st \mu \in \Prob^\Gamma(\calX) \} . \]
In other words, $\press_\Sigma$ is the value of ${\press}_\Sigma(\mu)$ for every $\Sigma$-equilibrium $\mu$.

\subsection{The Ising Model}
\label{sec:isingintro}
In this paper we mostly work with the ferromagnetic Ising model. For a free group $\FF_r$ with generating set $\langle s_1, \ldots, s_r \rangle$ the Ising model can be defined by setting
	\[ u(\mb{x}) = -\frac{J}{2} \sum_{i=1}^r \big( \mb{x}(e) \mb{x}(s_i) + \mb{x}(e) \mb{x}(s_i^{-1}) \big) \]
for $\mb{x} \in \{-1, +1\}^{\FF_r}$, where $J>0$ is some fixed ``interaction strength.'' We can think of each edge with the same label on either end as having energy $-J$ and each edge with opposite labels as having energy $+J$: we then assign half the energy of each edge to each of the vertices it's incident on. Then the energy assigned to the identity $e$ is given by the formula above.

Often a parameter called ``inverse temperature'' $\beta$ is included in addition to or instead of $J$ (usually not in the definition of $u$, but rather as a multiplicative factor often appearing along with $u$), but mathematically this is redundant in this setting. Here we follow the notation of \cite{georgii2011}. But we do use the standard terminology of ``low temperature'' (equivalent to large $J$) and ``high temperature'' (small $J$).

Depending on $J$ and $r$, there are up to three completely homogeneous Markov chains which are Gibbs measures for the Ising model. Following \cite[Section 12.2]{georgii2011}, for each $t \in \RR$, let
	\[ \alpha(t)=\frac{e^{-2J}+e^{-2t}}{2e^{-2J}+2\cosh(2t)} \]
and
	\[ \beta(t)=\frac{1}{e^{2(J+t)}+1} .\]
Then let $\mu_t$ be the completely homogeneous (i.e. invariant under the full automorphism group of the Cayley graph) Markov chain on $\{-1,+1\}^\Gamma$ with single-site marginal the row vector
	\[ \left(\begin{array}{cc}
		\PP\{-1\} & \PP\{+1\}
	\end{array} \right)
	=
	\left(\begin{array}{cc}
		\alpha(t) & 1-\alpha(t)
	\end{array} \right) \]
and transition matrix
	\[ \left(\begin{array}{cc}
		\PP\{-1 \to -1\} & \PP\{-1 \to +1\} \\
		\PP\{+1 \to -1\} & \PP\{+1 \to +1\}
	\end{array} \right)
	=
	\left(\begin{array}{cc}
		1-\beta(-t) & \beta(-t) \\
		\beta(t) & 1-\beta(t)
	\end{array} \right) . \]
In \cite[Section 12.2]{georgii2011} it is shown that the Gibbs measures for the Ising model on $\Gamma = \FF_r$ which are completely homogeneous Markov chains are exactly the measures $\mu_t$ where $t$ is a solution to 
	\[ t = \frac{2r-1}{2} \log \frac{\cosh(t+J)}{\cosh(t-J)} . \]
(Note that $d$ in \cite{georgii2011} denotes the branching factor, not the degree.)

For any $J,r$ we have a solution $t=0$, which is called the \emph{free boundary} Gibbs state $\mu^{\FB}$. For $J > J_{uniq}(r) \coloneqq \arctanh(\frac{1}{2r-1})$, there is a pair of solutions $t_+, t_-$ which give two additional states called the plus and minus boundary condition states, respectively. In fact $t_+ = - t_-$. We write $\mu^+ = \mu_{t^+}$ for the plus state and $\mu^- = \mu_{t^-}$ for the minus state. If $J \leq J_{uniq}(r)$ then $t=0$ is the only solution, and in fact $\mu_0 = \mu^{\FB}$ is the only Gibbs measure. For that reason $J_{uniq}(r)$ is called the uniqueness threshold.

There is a second threshold called the reconstruction threshold, $J_{rec}(r) = \arctanh \frac{1}{\sqrt{2r - 1}}$. The free boundary state is tail-trivial if and only if $0 < J \leq J_{rec}(r)$ \cite{bleher1995}. Tail-triviality is equivalent to being an extreme point in the convex set of all (not necessarily shift-invariant) Gibbs measures. Many authors also use the term ``pure.''

\section{The free boundary state is $\f$-nonequilibrium}
In this section we prove Proposition \ref{prop:fnoneq}. Throughout, the external field is assumed to be zero.

We prove Proposition \ref{prop:fnoneq} by showing that $t=0$ is a local minimum of $t \mapsto \press_{\f}(\mu_t)$ (where $\mu_t$ is as defined in Section \ref{sec:isingintro}). There may be other parametrizations which work for this purpose.


\subsection{Specific energy}
In this subsection we find an expression for the specific energy $u(\mu_t)$.

The energy of the configuration $\mbf{x} \in \{\pm 1\}^\Gamma$ due to interactions with the identity $e \in \Gamma$ is 
	\[ u(\mbf{x}) = \frac{1}{2} \sum_{v \text{ adj. to } e} -J \mbf{x}(e) \mbf{x}(v) = -\frac{J}{2} \sum_{i = 1}^r [ \mbf{x}(e) \mbf{x}(s_i) + \mbf{x}(e) \mbf{x}(s_i^{-1})] . \]
The factor of $\frac{1}{2}$ appears because each edge is involved in the interaction of two particles, so we assign half the energy of the edge to each. The corresponding definition in \cite{georgii2011} is Equation (15.22).

Each of the $2r$ products of adjacent spins has the same distribution under $\mu_t$, and
\begin{align*}
	\EE_{\mbf{x} \sim \mu_t} \mbf{x}(e) \mbf{x}(s_i)
		&= \PP\{\mbf{x}(e) = \mbf{x}(s_i)\} - \PP\{ \mbf{x}(e) \ne \mbf{x}(s_i)\} \\
		&= \left[ \alpha(t) (1-\beta(-t)) + (1-\alpha(t))(1-\beta(t)) \right] - \left[ \alpha(t) \beta(-t) + (1-\alpha(t)) \beta(t) \right] \\
		&= \left[ \alpha(t) -\alpha(t)\beta(-t) + 1 - \beta(t) - \alpha(t) + \alpha(t)\beta(t) \right] - \left[ \alpha(t) \beta(-t) + \beta(t)-\alpha(t)\beta(t) \right] \\
		&= 1 - 2\alpha(t) \beta(-t) - 2\beta(t) + 2 \alpha(t)\beta(t) \\
		&= 1 + 2\alpha(t)[\beta(t) - \beta(-t)] - 2\beta(t).
\end{align*}
So the specific energy is
	\[ u(\mu_t) \coloneqq \EE_{\mbf{x} \sim \mu_t} u(\mbf{x}) = -Jr \left( 1 + 2\alpha(t)[\beta(t) - \beta(-t)] - 2\beta(t) \right) . \]

\subsection{$\f$ invariant}
In this subsection we find an expression for $\f(\mu_t)$.

Since $\mu_t$ is a Markov chain, $\f(\mu_t) = F(\mu_t)$ \cite{bowen2010c}.
\begin{align*}
	F(\mu_t)
		&= (1-r) \shent_{\mbf{x} \sim \mu_t} (\mbf{x}(e)) + \sum_{i=1}^r \shent_{\mbf{x} \sim \mu_t} (\mbf{x}(s_i) | \mbf{x}(e)) \\
		&= (1-r) \shent(\alpha(t)) + r \left[ \alpha(t) \shent(\beta(-t)) + (1-\alpha(t)) \shent(\beta(t))\right]
\end{align*}
where in the second line we use $\shent$ to denote the binary entropy function $\shent(p) = -p \log p - (1-p) \log (1-p)$.

\subsection{Proof of Proposition \ref{prop:fnoneq}}
We have shown so far that
\begin{align*}
	\press_{\f}(\mu_t)
		&=  \f(\mu_t) - u(\mu_t) \\
		&= Jr \left( 1 + 2\alpha(t)[\beta(t) - \beta(-t)] + 2\beta(t) \right) \\
		&\hspace{1in} + (1-r) \shent(\alpha(t)) - r \left[ \alpha(t) \shent(\beta(-t)) + (1-\alpha(t)) \shent(\beta(t))\right] .
\end{align*}
The derivative is quite complicated, but using Mathematica \cite{Mathematica}, for example, we can then calculate $\left.\frac{d}{dt}\right|_{t=0} \press_{\f}(\mu_t) = 0$ and
	\[ \left.\frac{d^2}{dt^2}\right|_{t=0} \press_{\f}(\mu_t) = (\tanh (J)+1) \left[ (2r-1) \tanh (J)-1 \right] . \]
Since $\tanh J + 1 > 0$ for all $J$, this is positive exactly when
	\[ J > \arctanh[(2r-1)^{-1}] . \]
So for all temperatures below the uniqueness threshold, the free boundary state $\mu_0$ is a local minimum of $t \mapsto \press_{\f}(\mu_t)$, so cannot be $\f$-equilibrium.


%
%

\section{Pressure over a typical sofic approximation (Proof of Theorem \ref{thm:existence})}
Suppose that two Gibbs states $\mu, \nu$ are both tail-trivial but have different $\f$-pressures. For the Ising model, by \cite[Corollary 16]{shriver2021} this implies that $\mu,\nu$ both satisfy a standard kind of ``second moment'' criterion. For $\mu$, for example, this means that if $\lambda$ is any self-joining of $\mu$ (that is, $\lambda$ is a shift-invariant measure on $(\{+1, -1\}^2)^{\FF_r}$ with both $\{+1,-1\}^{\FF_r}$-marginals equal to $\mu$) then $\f(\lambda) \leq 2 \f(\mu)$.

The next proposition shows that this second-moment criterion implies that for a ``typical'' sofic approximation $\Sigma$, the $\Sigma$-entropy of both measures are the same as their $\f$-invariants. Combined with Proposition \ref{prop:fnoneq}, this proves Theorem \ref{thm:existence}, since $\mu^{\FB}$ is tail-trivial at temperatures down to the reconstruction threshold and $\mu^+,\mu^-$ are always tail-trivial.

\begin{prop}
\label{prop:tailtrivialf}
	Let $\Gamma = \mathbb{F}_r$ denote the rank-$r$ free group. If $\mu \in \Prob^\Gamma(\A^\Gamma)$ is such that $\f(\lambda)\leq 2\f(\mu)$ for any self-joining $\lambda$ of $\mu$, there exists a sequence $\{S_n \subset \Hom(\Gamma, \Sym(n)\}_{n=1}^\infty$ of sets of homomorphisms with $\PP(S_n) \to 1$ such that if $\sigma_n \in S_n$ for each $n$ then
	\begin{enumerate}
		\item $\Sigma = \{\sigma_n\}_{n=1}^\infty$ is a sofic approximation and
		\item $\h_\Sigma(\mu) = \f(\mu)$.
	\end{enumerate}
\end{prop}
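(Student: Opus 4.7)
The plan is a second-moment / Paley-Zygmund concentration argument, diagonalized over a countable weak$^*$-neighborhood basis $\calO_k \searrow \{\mu\}$. Write $N_\calO(\sigma) \coloneqq \abs{\Omega(\sigma,\calO)}$. It suffices to show that for each $\calO$ in the basis and each $\epsilon>0$, the event
\[
e^{n(\f(\mu)-\epsilon)} \leq N_\calO(\sigma_n) \leq e^{n(\f(\mu)+\epsilon)}
\]
has probability tending to $1$. A standard diagonal construction then produces $S_n$ with $\PP(S_n)\to 1$ such that, for any $\sigma_n\in S_n$, $\tfrac{1}{n}\log N_{\calO_k}(\sigma_n)\to \f(\mu)$ for every $k$, giving $\h_\Sigma(\mu)=\f(\mu)$.

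The upper half is routine. By the definition of $\f(\mu)$ we have $\EE N_\calO(\sigma_n) \leq e^{n(\f(\mu)+\epsilon/2)}$ for small enough $\calO$ and large $n$, so Markov yields $\PP(N_\calO(\sigma_n) \geq e^{n(\f(\mu)+\epsilon)}) \leq e^{-n\epsilon/2}$, decaying exponentially.

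The lower half is where the self-joining hypothesis enters. Each pair in $\Omega(\sigma_n,\calO)^2$ is a single element of $(\A\times\A)^{V_n}$ whose empirical distribution over $\sigma_n$ is a joining of two measures in $\calO$, hence lies near some self-joining $\lambda$ of $\mu$. By compactness of the space of self-joinings, I would cover it by finitely many small neighborhoods centered at $\lambda_1,\dots,\lambda_K$; using $\f(\lambda_i)\leq 2\f(\mu)$ together with the definition of the $\f$-invariant on $(\A\times\A)^\Gamma$ then yields
\[
\EE N_\calO(\sigma_n)^2 \leq K\, e^{n(2\f(\mu)+\epsilon)}
\]
for small enough $\calO$ and large $n$. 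Paley-Zygmund then gives $\PP\bigl(N_\calO(\sigma_n)\geq \tfrac12 \EE N_\calO(\sigma_n)\bigr)\geq \tfrac{1}{4K}e^{-2n\epsilon}$.

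The main obstacle is that this Paley-Zygmund bound is only positive (possibly subexponentially small), not $\to 1$. To close the gap I would try: (i) sharpening the second-moment estimate to $\EE N_\calO^2 = (1+o(1))(\EE N_\calO)^2$, in which case Chebyshev delivers genuine concentration of $N_\calO$ around its mean; (ii) a contiguity or planted-model argument exploiting the product structure $\sigma_n=(\sigma_n(s_1),\dots,\sigma_n(s_r))$ of independent uniform permutations, transferring concentration from a planted Markov configuration; or (iii) a direct concentration inequality (Azuma--McDiarmid along a carefully chosen permutation-swap martingale) for $\tfrac{1}{n}\log N_\calO$ in the random-homomorphism model. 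Soficity of $\sigma_n$ with probability $1-e^{-\Omega(n)}$ for any fixed $(F,\delta)$ is standard and is absorbed into $S_n$ without affecting $\PP(S_n)\to 1$.
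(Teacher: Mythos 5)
Your skeleton matches the paper's: Markov's inequality for the upper bound, Paley--Zygmund with the self-joining second-moment hypothesis for the lower bound, and a diagonalization over a neighborhood basis to assemble the sets $S_n$. But the proof is incomplete exactly where you flag it: Paley--Zygmund only yields $\PP\{N_\calO(\sigma_n) \geq \theta\, \EE N_\calO(\sigma_n)\} \geq (1-\theta)^2 e^{-3\delta n}$, a subexponentially small lower bound, and none of your three proposed fixes is actually carried out. Moreover, as stated, each has a problem. Option (i) is unavailable: the hypothesis $\f(\lambda) \leq 2\f(\mu)$ only controls exponential growth rates, so the best available second-moment bound is $\EE N_\calO^2 \leq e^{n(2\f(\mu)+\delta)}$, never $(1+o(1))(\EE N_\calO)^2$. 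Option (iii), applying Azuma--McDiarmid to $\tfrac{1}{n}\log N_\calO$ for a \emph{fixed} neighborhood $\calO$, fails the bounded-differences test: a single permutation switching moves every empirical distribution by at most $2/n$, which can push an uncontrolled number of microstates across the boundary of $\calO$, so $\log N_\calO$ can jump macroscopically between adjacent homomorphisms.

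The paper's resolution is a variant of your (iii) applied to a different functional. It defines $\rho_\mu(\sigma,h) = \inf\{\varepsilon>0 : \tfrac{1}{n}\log\abs*{\Omega(\sigma,\mu,\varepsilon)} \geq h\}$, the minimal radius needed to capture $e^{nh}$ microstates. This quantity \emph{does} change by at most $2/n$ per switching (the count at radius $\varepsilon$ over $\sigma$ is dominated by the count at radius $\varepsilon+2/n$ over the switched map), so it concentrates exponentially around its mean (Lemma~\ref{lem:concentration}). The subexponential Paley--Zygmund bound then says the event $\{\rho_\mu(\sigma_n,h)\leq\varepsilon\}$ has probability at least $(1-\theta)^2 e^{-3\delta n}$ with $\delta$ arbitrarily small, which is incompatible with exponential concentration unless $\limsup_n \EE\rho_\mu(\sigma_n,h)=0$ for every $h<\f(\mu)$ (Lemma~\ref{lem:rhozero}); that in turn gives the high-probability lower bound on $N_\calO$ that your argument needs. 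The missing idea is precisely this exchange of the roles of the radius and the count before invoking the switching martingale.
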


\subsection{Concentration}
For $h \in [0, \log|\A|]$ and $\sigma \in \Hom(\Gamma, \Sym(n))$, let
	\[ \rho_\mu (\sigma, h) = \inf \left\{ \varepsilon>0 \st \frac{1}{n} \log \abs*{\Omega(\sigma, \mu, \varepsilon)} \geq h \right\} . \]
Note that $\rho_\mu(\sigma, h) \leq \diam(\A^n)$. For fixed $h$ and $\mu$, this quantity exhibits exponential concentration as a function of a random homomorphism $\sigma$:

\begin{lemma}
\label{lem:concentration}
	If $\sigma_n \sim \Unif(\Hom(\Gamma, \Sym(n)))$ then
	\[ \PP \left\{ \abs*{ \rho_\mu (\sigma_n, h) - \EE [\rho_\mu(\sigma_n, h)]} > t \right\} \leq 2 \exp\left( \frac{-n t^2}{8r} \right) . \]
\end{lemma}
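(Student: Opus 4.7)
The plan is to apply the bounded differences inequality (Azuma--Hoeffding) to a Doob martingale obtained by revealing $\sigma_n$ one coordinate at a time. Since $\FF_r$ is free on $s_1,\ldots,s_r$, a uniform $\sigma_n \in \Hom(\FF_r,\Sym(n))$ is the same data as $r$ independent uniform random permutations $\tau_i := \sigma_n(s_i) \in \Sym(n)$, so $\rho_\mu(\sigma_n, h)$ is a function of $(\tau_1,\ldots,\tau_r) \in \Sym(n)^r$.

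The core estimate is the following Lipschitz claim: if $\sigma$ and $\sigma'$ agree on all generators except $s_i$, with $\sigma'(s_i) = (a\ b) \circ \sigma(s_i)$ for some $a,b \in [n]$, then
\[
	\abs{\rho_\mu(\sigma, h) - \rho_\mu(\sigma', h)} \leq \frac{2}{n}.
\]
Such a local modification alters the action of any fixed word $\gamma \in \FF_r$ at only a bounded (in terms of $\abs{\gamma}$, not $n$) number of points of $[n]$. Hence for every $\mb{x} \in \A^n$ and every finite window $F \subset \FF_r$, the pullback names $\Pi_v^\sigma \mb{x}$ and $\Pi_v^{\sigma'} \mb{x}$ agree on $F$ for all but a bounded number of $v \in [n]$. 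Under the weak-$*$ metric that defines the open balls $\ball{\mu}{\varepsilon}$ used in $\Omega(\sigma,\mu,\varepsilon)$, this yields $d(P_\mb{x}^\sigma, P_\mb{x}^{\sigma'}) \leq 2/n$ uniformly in $\mb{x}$, giving $\Omega(\sigma,\mu,\varepsilon) \subseteq \Omega(\sigma',\mu,\varepsilon + 2/n)$ and hence the stated shift of $\rho_\mu$.

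Given this Lipschitz estimate, I would build the Doob martingale by revealing the values $\tau_i(j)$ in the order $(i,j) \in \{1,\ldots,r\}\times\{1,\ldots,n\}$, setting $M_{(i-1)n + j} = \EE\bigl[\rho_\mu(\sigma_n, h) \bigm| \tau_1,\ldots,\tau_{i-1}, \tau_i(1),\ldots,\tau_i(j)\bigr]$. Conditioned on the past, $\tau_i(j)$ is uniform over the set $R \subseteq [n]$ of still-unused values; any two completions with $\tau_i(j) = a$ versus $\tau_i(j) = b$ (both in $R$) can be coupled by the natural value-swap so that the resulting permutations differ by the single transposition $(a\ b)$ on the output of $\tau_i$. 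By the Lipschitz claim, $\rho_\mu$ then differs by at most $2/n$, so $\abs{M_k - M_{k-1}} \leq 2/n$ almost surely for each of the $rn$ martingale increments. Azuma--Hoeffding gives
\[
	\PP\bigl(\abs{\rho_\mu(\sigma_n, h) - \EE\rho_\mu(\sigma_n, h)} > t\bigr) \leq 2\exp\!\left(-\frac{t^2}{2 \cdot rn \cdot (2/n)^2}\right) = 2\exp\!\left(-\frac{nt^2}{8r}\right),
\]
which is the claimed bound.

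The main obstacle is the Lipschitz step, specifically ensuring the $2/n$ bound is uniform in $\mb{x}$ and insensitive to the depth of observables in the metric. One must choose the weak-$*$ metric on $\Prob(\A^\Gamma)$ with appropriate weights on observables of each radius so that the pointwise affected-vertex count (which grows with radius) aggregates into a clean $2/n$ bound after summation. With the right weighting this constant is indeed $2$, giving the stated exponent $-nt^2/(8r)$; the rest of the argument is standard martingale concentration.
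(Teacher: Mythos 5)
Your proposal is correct and follows essentially the same route as the paper: the paper's proof is a short sketch invoking the standard switching/martingale argument (citing \cite{shriver2021}), with the same $2/n$ Lipschitz bound per switching and the same Azuma--Hoeffding computation $2\exp(-t^2/(2nr(2/n)^2)) = 2\exp(-nt^2/(8r))$. Your explicit caveat about choosing the weak-$*$ metric so that the switching bound is uniformly $2/n$ is a real point, but it is the same implicit assumption the paper makes.
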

\begin{proof}
	 This is a consequence of a standard kind of martingale concentration argument using ``switchings''; a version for the permutation model appears in \cite{shriver2021}. If two homomorphisms $\sigma_1, \sigma_2$ differ by a ``switching'' then for any microstate, its empirical distributions over $\sigma_1, \sigma_2$ are distance at most $2/n$ apart. So the minimum distance from $\mu$ to get any specified number of microstates changes by at most $2/n$. Hence
	 \[ \PP \left\{ \abs*{ \rho_\mu (\sigma_n, h) - \EE [\rho_\mu(\sigma_n, h)]} > t \right\} \leq 2 \exp\left( \frac{-t^2}{2nr(2/n)^2} \right). \qedhere\]
\end{proof}

It is not immediately clear whether $\EE [\rho_\mu(\sigma_n, h)]$ converges as $n \to \infty$, but we can define the upper limit
	\[ \overline\rho_\mu (h) = \limsup_{n \to \infty} \EE [\rho_\mu(\sigma_n, h)]. \]
This is a nondecreasing function of $h$.
\begin{lemma}
\label{lem:rhozero}
	If the product self-joining $\mu \times\mu$ has maximal $\f$ invariant, then $\overline\rho_\mu(h) = 0$ for every $h < \f(\mu)$.
\end{lemma}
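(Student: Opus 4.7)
The plan is a standard second moment argument combined with the concentration estimate from Lemma~\ref{lem:concentration}. First I would establish a uniform upper bound
\[
\limsup_{n\to\infty} \tfrac{1}{n}\log \EE\abs{\Omega(\sigma_n, \mu, \varepsilon)}^2 \ \leq\ 2\f(\mu) + \eta(\varepsilon),\qquad \eta(\varepsilon) \downarrow 0.
\]
To prove it, view a pair $(\mb{x}, \mb{y})$ of $\varepsilon$-good models for $\mu$ over $\sigma_n$ as a single configuration in $(\A \times \A)^n$; its empirical distribution is a joining of its two (near-$\mu$) marginal empiricals, hence sits near the compact set of self-joinings of $\mu$. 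Cover the latter by finitely many small balls around joinings $\lambda_1, \dots, \lambda_k$, invoke the hypothesis $\f(\lambda_i) \leq 2\f(\mu)$, and add up the standard first-moment bounds implicit in the definition of each $\f(\lambda_i)$.

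Next I would apply Paley--Zygmund. By the definition of $\f(\mu)$, $\EE\abs{\Omega(\sigma_n, \mu, \varepsilon)} \geq e^{n(\f(\mu)-\delta)}$ along some subsequence for any $\delta > 0$; combined with the second moment bound this gives
\[
\PP\!\bigl(\abs{\Omega(\sigma_n, \mu, \varepsilon)} \geq \tfrac{1}{2}\EE\abs{\Omega}\bigr) \ \geq\ \tfrac{1}{4} e^{-n(2\delta+\eta(\varepsilon))}.
\]
For $h < \f(\mu)$ and $\delta < \f(\mu) - h$, the left event is eventually contained in $\{\rho_\mu(\sigma_n, h) \leq \varepsilon\}$, so the same bound holds for $\PP(\rho_\mu \leq \varepsilon)$. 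I would then convert this probability lower bound into an expectation upper bound via Lemma~\ref{lem:concentration}: if $\EE\rho_\mu(\sigma_n, h) > \varepsilon + t$, concentration gives $\PP(\rho_\mu \leq \varepsilon) \leq 2 e^{-nt^2/(8r)}$, which combined with the Paley--Zygmund lower bound forces $t^2 \leq 8r(2\delta + \eta(\varepsilon)) + O(1/n)$. Hence $\EE\rho_\mu(\sigma_n, h) \leq \varepsilon + \sqrt{8r(2\delta + \eta(\varepsilon))} + o(1)$; letting $n \to \infty$ and then $\delta, \varepsilon \to 0$ gives $\overline\rho_\mu(h) = 0$.

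The hard part will be making Paley--Zygmund work along every large $n$ rather than only along the subsequence realizing the limsup of $\tfrac{1}{n}\log\EE\abs{\Omega}$ --- which is what $\overline\rho_\mu(h) = 0$ actually requires. The second moment bound combined with Cauchy--Schwarz forces $\EE\abs{\Omega} \leq e^{n(\f(\mu) + \eta(\varepsilon)/2)}$ at every large $n$, but does not by itself control the liminf. In the Ising Markov setting used in this paper, I expect to resolve this by invoking an explicit product formula for $\EE\abs{\Omega}$ over the random homomorphism model, which shows $\tfrac{1}{n}\log \EE\abs{\Omega(\sigma_n, \mu, \varepsilon)} \to \f(\mu)$ along the full sequence.
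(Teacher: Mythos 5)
Your argument is the paper's argument: Paley--Zygmund with the second-moment exponent controlled by $\sup\{\f(\lambda)\st\lambda\text{ a self-joining of }\mu\}=2\f(\mu)$, followed by the observation that a subexponentially-likely event $\{\rho_\mu(\sigma_n,h)\leq\varepsilon\}$ is incompatible with the exponential concentration of Lemma~\ref{lem:concentration} unless $\EE\rho_\mu(\sigma_n,h)\to 0$. The one place you diverge is exactly the point you flag as ``the hard part'': obtaining the first-moment lower bound $\EE\abs{\Omega(\sigma_n,\mu,\varepsilon)}\geq e^{n(\f(\mu)-\delta)}$ for \emph{all} large $n$ rather than along the subsequence realizing a limsup. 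The paper resolves this not with a model-specific computation but by citing that
\[ \f(\mu) \;=\; \inf_{\varepsilon>0}\ \liminf_{n\to\infty}\ \frac{1}{n}\log\EE\abs*{\Omega(\sigma_n,\mu,\varepsilon)} , \]
i.e.\ the liminf and limsup versions of the annealed good-model count both coincide with the information-theoretic formula for the $\f$-invariant; this is a consequence of the method of \cite{bowen2010c} and holds for arbitrary finite-alphabet shift systems over $\FF_r$. Your proposed workaround --- an explicit product formula for $\EE\abs{\Omega}$ valid for Markov chains --- does yield genuine limits, but it would only establish the lemma for Markov chains, whereas the lemma (and Proposition~\ref{prop:tailtrivialf}, which consumes it) is stated for arbitrary $\mu$ satisfying the self-joining hypothesis. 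With that citation substituted for the workaround, your proof is complete and is the paper's proof.
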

\begin{proof}
	By the Paley-Zygmund inequality, for any $\theta \in (0,1)$
		\[ \PP\left\{ \abs*{\Omega(\sigma_n, \mu, \varepsilon)} \geq \theta \EE\abs*{\Omega(\sigma_n, \mu, \varepsilon)} \right\} \geq (1-\theta)^2 \frac{\big( \EE\abs*{\Omega(\sigma_n, \mu, \varepsilon)} \big)^2}{\EE\! \big[ \abs*{\Omega(\sigma_n, \mu, \varepsilon)}^2 \big]} . \]
	Now by a standard compactness argument
		\[ \inf_{\varepsilon>0} \limsup_{n \to \infty} \frac{1}{n} \log \EE\! \big[ \abs*{\Omega(\sigma_n, \mu, \varepsilon)}^2 \big] = \sup\{ \f(\lambda) \st \lambda \text{ self-joining of $\mu$} \} ,  \]
	which by assumption is equal to $\f(\mu \times \mu) = 2 \f(\mu)$. So for any $\delta>0$, if $\varepsilon>0$ is small enough then for all large enough $n$
		\[ \EE\! \big[ \abs*{\Omega(\sigma_n, \mu, \varepsilon)}^2 \big] \leq \exp\big( n ( 2 \f(\mu) + \delta) \big) .\]
	It is a consequence of \cite{bowen2010c} that
		\[ \f(\mu) = \inf_{\varepsilon>0} \liminf_{n \to \infty} \frac{1}{n} \log \EE \abs*{\Omega(\sigma_n, \mu, \varepsilon)}; \]
	whether a limsup or liminf is used, the method of \cite{bowen2010c} shows that this coincides with the earlier information-theoretic formula for $\f(\mu)$ in \cite{bowen2010}. In particular, for any $\delta>0$, for all $\varepsilon>0$ we have
		\[ \EE \abs*{\Omega(\sigma_n, \mu, \varepsilon)} \geq \exp \big( n( \f(\mu) - \delta) \big) \]
	for all large enough $n$.
	
	So given $\delta>0$, if $\varepsilon$ is small enough then for all large $n$
		\[ \PP\left\{ \abs*{\Omega(\sigma_n, \mu, \varepsilon)} \geq \theta \EE\abs*{\Omega(\sigma_n, \mu, \varepsilon)} \right\} \geq (1-\theta)^2 \frac{\big(\exp \big( n( \f(\mu) - \delta) \big) \big)^2}{ \exp\big( n ( 2 \f(\mu) + \delta) \big)} = (1-\theta)^2 \exp(-3\delta n). \]
		
	Now fix some $h < \f(\mu)$. Pick some $\delta>0$ such that $\f(\mu)-\delta > h$; then for any $\varepsilon>0$ and $\theta \in (0,1)$ for all large enough $n$ we have
		\[ \theta \EE\abs*{\Omega(\sigma_n, \mu, \varepsilon)} \geq \theta \exp(n (\f(\mu) - \delta)) \geq \exp(n h). \]
	So for any $\delta>0$, if $\varepsilon>0$ is small enough then for all large enough $n$
		\[ \PP\{ \rho_\mu(\sigma_n, h) \leq \varepsilon\} \geq \PP\{ \tfrac{1}{n} \log \abs*{\Omega(\sigma_n, \mu, \varepsilon)} \geq h \} \geq (1-\theta)^2 \exp(-3\delta n). \]
	Since $\varepsilon,\delta>0$ can be as small as we want, this is incompatible with the exponential concentration in Lemma~\ref{lem:concentration} unless $\overline\rho_\mu(h) = 0$.
\end{proof}

\subsection{Proof of Proposition~\ref{prop:tailtrivialf}}
Part of the proof is based on the proof of \cite[Theorem 1.1]{airey2022}.

	Let $\gamma_n \sim \Unif(\Hom(\Gamma,\Sym(n))$.
	For each $n \in \NN$ and $\varepsilon,\delta>0$ we define the following two sets of ``bad'' homomorphisms: if $\f(\mu)$ is finite, then
	\begin{align*}
		\mathcal{F}^{\geq}(\varepsilon,\delta,n)
			&= \left\{ \frac{1}{n} \log \abs*{\Omega(\gamma_n, \mu, \varepsilon)} \geq \f(\mu) + \delta \right\} \\
		\mathcal{F}^{\leq}(\varepsilon,\delta,n)
			&= \left\{ \frac{1}{n} \log \abs*{\Omega(\gamma_n, \mu, \varepsilon)} \leq \f(\mu) - \delta \right\}
	\intertext{If $\f(\mu) < 0$ then we set $\mathcal{F}^{\leq}(\varepsilon,\delta,n) = \varnothing$ and in the definition of $\mathcal{F}^{\geq}(\varepsilon,\delta,n)$ we change $\f(\mu) + \delta$ to $0$. For a finite subset $F \subset \Gamma$ we define}
		\mathcal{F}^3(F,\delta, n)
			&= \left\{ \gamma_n \text{ is not } (F,\delta)\text{-sofic} \right\}
	\end{align*}
	All of these sets have small probability:
	\begin{enumerate}
		\item For each $\delta > 0$ and small enough (depending on $\delta$) $\varepsilon > 0$ there exists $\xi>0$ such that for all large $n$
			\[ \PP \left( \mathcal{F}^{\geq}(\varepsilon,\delta,n) \right) < e^{-\xi n} . \]
		This is a consequence of Markov's inequality.
		
		\item For each $\varepsilon, \delta > 0$ there exists $\xi>0$ such that for all large $n$
			\[ \PP \left( \mathcal{F}^{\leq}(\varepsilon,\delta,n) \right) \leq e^{-\xi n} . \]
		\emph{Proof}. If $\f(\mu) < 0$ then this is immediate since $\mathcal{F}^{\leq} = \varnothing$. So assume $\f(\mu) \geq 0$. If the event $\mathcal{F}^{\leq}(\varepsilon,\delta,n)$ occurs then $\rho_\mu(\sigma_n, \f(\mu)-\delta/2) \geq \varepsilon$. But by Lemma~\ref{lem:rhozero} $\overline\rho_\mu(\f(\mu)-\delta/2)=0$, so by concentration of $\rho_\mu(\sigma_n, h)$ (Lemma~\ref{lem:concentration}) this is exponentially unlikely to occur for all large $n$. \hfill \qedsymbol
		
		\item For each finite $F \subset \Gamma$ and $\delta,c > 0$, for all large enough $n$
			\[ \PP \left( \mathcal{F}^3(F,\delta, n) \right) < e^{-cn} . \]
		This is (a weaker version of) \cite[Lemma 3.1]{airey2022} with minor modifications.
	\end{enumerate}
	
	We now use these bounds to construct the sequence $(S_n)$.
	
	Pick a sequence $(\delta_m)_{m \in \NN}$ decreasing with limit 0, then pick a decreasing sequence $(\varepsilon_m)_{m \in \NN}$ small enough that the bound in item 1 above holds.  For $\diamond \in \{ \leq, \geq\}$ let $\calE^{\diamond}_{m,n} = \mathcal{F}^{\diamond}(\varepsilon_m, \delta_m, n)$.
	
	For each $m$ let $\xi_m>0$ be such that $\PP(\calE^{\diamond}_{m,n}) < e^{-\xi_m n}$ for all large enough $n$ and all $\diamond \in \{\leq,\geq\}$. Pick an exhaustion $(F_m)_{m \in \NN}$ of $\Gamma$ and let $\calE^3_{m,n} = \mathcal{F}^3(F_m, \varepsilon_m, n)$.
	
	Choose an increasing sequence $K_m$ so that for all $n > K_m$
	\begin{align*}
		3 \sum_{i=1}^m e^{-\xi_i n}
			&\leq \tfrac{1}{m} , \\
		\PP \left[ \calE^{\diamond}_{i,n} \right]
			&\leq e^{-\xi_i n} \quad \text{ for all } i \in [m],\ \diamond \in \{\leq, \geq\} \\
		\PP \left[ \calE^3_{i,n} \right]
			&\leq e^{-\xi_i n} \quad \text{ for all } i \in [m] .
	\end{align*}
	Now let $\calU_{m,n} = \left( \bigcup_{i=1}^m (\calE^{\geq}_{i,n} \cup\calE^{\leq}_{i,n} \cup \calE^3_{i,n}) \right)^c$. For all $n > K_m$ we have
		\[ \PP \left[ \calU_{m,n} \right] \geq  1 - 3 \sum_{i=1}^m e^{-\xi_i n}\geq 1 - \tfrac{1}{m} . \]
		
	Now for each $m$ and each $n \in (K_m, K_{m+1}]$ let $S_n = \calU_{m,n}$. Note that for each $m,n$ we have $\calU_{m,n} \supset \calU_{m+1,n}$. So if $\Sigma = \{\sigma_n\}_{n = 1}^\infty$ is such that $\sigma_n \in S_n$ for each $n$, this ensures that for each $m$, for all $n > K_m$ we have $\sigma_n \in \calU_{m,n}$. In particular, 
	\begin{enumerate}
		\item For each $m$, for all $n > K_m$ we have $\sigma_n \not\in \calE^{\geq}_{m,n}$. So for all large $n$ we have $\sigma_n \not\in \calF^{\geq}(\varepsilon_m, \delta_m, n)$. If $\f(\mu) \geq 0$ this means
			\[ \limsup_{n \to \infty} \frac{1}{n} \log \abs*{\Omega(\sigma_n, \mu, \varepsilon_m)} \leq \f(\mu) + \delta_m . \]
		If $\f(\mu) = -\infty$ it means $\log \abs*{\Omega(\sigma_n, \mu, \varepsilon_m)} = -\infty$ for all large enough $n$, so
			\[ \limsup_{n \to \infty} \frac{1}{n} \log \abs*{\Omega(\sigma_n, \mu, \varepsilon_m)} = -\infty . \]
		In either case, taking $m \to \infty$ gives $\h_\Sigma(\mu) \leq \f(\mu)$. If $\f(\mu) = -\infty$ then we immediately get $\h_\Sigma(\mu) = \f(\mu)$.
		
		\item For each $m$, for all $n > K_m$ we have $\sigma_n \not\in \calE^{\leq}_{m,n}$. So for all large $n$ we have $\sigma_n \not\in \calF^{\leq}(\varepsilon_m, \delta_m, n)$. If $\f(\mu) \geq 0$ this means
			\[ \liminf_{n \to \infty} \frac{1}{n} \log \abs*{\Omega(\sigma_n, \mu, \varepsilon_m)} \geq \f(\mu) - \delta_m , \]
		and taking $m \to \infty$ gives $\h_\Sigma(\mu) \geq \f(\mu)$.
		 
		 \item For each $m$, for all $n > K_m$ we have $\sigma_n \not\in \calE^3_{m,n}$. So $\Sigma$ is a sofic approximation to $\Gamma$. \qedhere
	\end{enumerate}

\section{Nonequilibrium over every sofic approximation}
\label{sec:thmBproof}
In this section, energy and pressure functions will refer to an Ising model with interaction strength $J>0$ and no external field.

In this section we establish that, for low enough temperatures, the free-boundary state on the rank-$r$ free group $\FF_r$ is nonequilibrium over every sofic approximation. We show that the temperature being at or below the reconstruction threshold is a sufficient condition for $r \geq 2$.

We use a $\Sigma$-independent upper bound for $\h_\Sigma$ to compare the free boundary state to a deterministic state, which has entropy zero over every $\Sigma$. The upper bound is the following:

\begin{lemma}
	Let $\Sigma$ be any sofic approximation to the free group $\FF_r = \langle s_1, \ldots, s_r \rangle$, and let $\mu \in \Prob^{\FF_r}(\A^{\FF_r})$. Let $\mu_k \in \Prob^{\ZZ}(\A^{\ZZ})$ be the marginal of $\mu$ on the copy of $\ZZ$ in $\FF_r$ generated by $s_k$. Then
		\[ \h_\Sigma(\mu) \leq \h^{\KS}(\mu_k) . \]
\end{lemma}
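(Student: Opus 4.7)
The plan is to restrict the sofic approximation $\Sigma$ of $\FF_r$ to the cyclic subgroup $\langle s_k \rangle \cong \ZZ$, producing a sofic approximation of $\ZZ$, and then invoke the standard identification (due to Bowen, extended to general amenable groups by Kerr--Li) of sofic entropy with Kolmogorov--Sinai entropy for actions of amenable groups.

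Concretely, given $\Sigma = (\sigma_n \colon \FF_r \to \Sym(V_n))$, I would define $\tau_n \colon \ZZ \to \Sym(V_n)$ by $\tau_n(m) = \sigma_n(s_k)^m$; this is a genuine homomorphism. To verify that $\Sigma_k := (\tau_n)$ is a sofic approximation to $\ZZ$, I need the almost-free condition for $\tau_n$ on the sets $\{m \in \ZZ \st 0 < |m| \leq M\}$. Taking $F \subset \FF_r$ to be a finite set containing $\{s_k^j \st |j| \leq M\}$ and iterating the almost-action axiom of $\sigma_n$, one obtains $\sigma_n(s_k^m) v = \sigma_n(s_k)^m v = \tau_n(m) v$ for all but an arbitrarily small fraction of $v$; the almost-freeness of $\sigma_n$ on $F$ then forces $\tau_n(m) v \neq v$ for most $v$.

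Next, I would show that for any weak-$*$ neighborhood $\calO_k$ of $\mu_k$ in $\Prob(\A^\ZZ)$, there is a weak-$*$ neighborhood $\calO$ of $\mu$ in $\Prob(\A^{\FF_r})$ such that $\Omega(\sigma_n, \calO) \subset \Omega(\tau_n, \calO_k)$ for all sufficiently large $n$. Let $\rho \colon \A^{\FF_r} \to \A^\ZZ$ be the restriction map to $\langle s_k \rangle$; then $\rho_* \mu = \mu_k$ and $\rho_*$ is weak-$*$ continuous, so if $P_{\mb{x}}^{\sigma_n} \in \calO$ with $\calO$ small then $\rho_* P_{\mb{x}}^{\sigma_n}$ is close to $\mu_k$. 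Compare this with the $\ZZ$-empirical distribution:
\[ P_{\mb{x}}^{\tau_n} = \frac{1}{|V_n|} \sum_v \delta_{\Pi_v^{\tau_n} \mb{x}}, \qquad \rho_* P_{\mb{x}}^{\sigma_n} = \frac{1}{|V_n|} \sum_v \delta_{\rho(\Pi_v^{\sigma_n} \mb{x})}. \]
Since $\rho(\Pi_v^{\sigma_n} \mb{x})(m) = \mb{x}(\sigma_n(s_k^m) v)$ and $\Pi_v^{\tau_n}\mb{x}(m) = \mb{x}(\sigma_n(s_k)^m v)$, these two sequences of pullback names agree on a ball of arbitrary prescribed radius $M$ in $\ZZ$ for all but a small fraction of $v$, by the iterated almost-action property. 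Hence $P_{\mb{x}}^{\tau_n}$ is close to $\rho_* P_{\mb{x}}^{\sigma_n}$, and therefore close to $\mu_k$. Taking logarithms, dividing by $|V_n|$, and then taking limsups and the infimum over $\calO_k$ yields $\h_\Sigma(\mu) \leq \h_{\Sigma_k}(\mu_k)$. The proof concludes by citing $\h_{\Sigma_k}(\mu_k) = \h^{\KS}(\mu_k)$ for sofic approximations to the amenable group $\ZZ$.

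The main obstacle is the bookkeeping of the quantitative relationship between $\calO$ and $\calO_k$: one has to ensure that the error from the almost-action failure of $\sigma_n$ (which controls how $P_{\mb{x}}^{\tau_n}$ differs from $\rho_* P_{\mb{x}}^{\sigma_n}$) can be made smaller than any tolerance required by $\calO_k$. Since cylinder probabilities of bounded radius generate the weak-$*$ topology on $\Prob(\A^\ZZ)$ and the almost-action error can be made arbitrarily small by choosing $n$ large, this is a routine matter of choosing parameters in the correct order.
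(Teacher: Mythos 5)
Your proposal is correct and follows essentially the same route as the paper: restrict $\Sigma$ to $\langle s_k\rangle \cong \ZZ$, observe that good models for $\mu$ over $\sigma_n$ are good models for $\mu_k$ over the restricted maps, and invoke the identification of sofic entropy with Kolmogorov--Sinai entropy for sofic approximations to $\ZZ$. The only difference is that you spell out the bookkeeping (the iterated almost-action estimate comparing $\sigma_n(s_k^m)$ with $\sigma_n(s_k)^m$ and the resulting closeness of $P_{\mb{x}}^{\tau_n}$ to $\rho_* P_{\mb{x}}^{\sigma_n}$) which the paper leaves implicit.
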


Here $\h^{\KS}$ refers to the standard Kolmogorov-Sinai entropy rate of a $\ZZ$-system.

This is very similar to \cite[Theorem 4.4]{bowen2020}, and has essentially the same proof. The difference is that result considers the action of a subgroup on the same space, while we restrict the space as well. Another way to look at it is that both sides of the inequality are sofic entropies of actions on the same space with the same partition (in this case, the partition generated by the labels at the identity), which is dynamically generating under the action of the whole group but not the subgroup: to prove \cite[Theorem 4.4]{bowen2020} one would need to work with a partition (or pseudometric) which is dynamically generating under the action of the subgroup.

It also follows from an upper bound for Rokhlin entropy \cite[Theorem 1.5]{seward2017}; Seward notes that the sofic version had been noticed independently by several authors, but I am not aware of an existing published reference.

\begin{proof}
	Let $\Sigma_k$ be the restriction of $\Sigma$ to $\langle s_k \rangle \cong \ZZ$. This is a sofic approximation, and any good model for $\mu$ over a map in $\Sigma$ is a good model for $\mu_k$ over the corresponding restricted map in $\Sigma_k$. Finally, by \cite{bowen2012}, the sofic entropy over $\Sigma_k$ is just the KS entropy rate.
\end{proof}

Let $\shent^{\edge}(\mu_k) = \shent_{\mb{X} \sim \mu_k}(X_1 | X_0)$ denote the ``conditional edge entropy.'' If $\mu_k$ is an invariant Markov chain then $\shent^{\edge}(\mu_k) = \h^{\KS}(\mu_k)$ \cite[Equation (4.25)]{cover2006}, and for general invariant measures we have $\shent^{\edge}(\mu_k) \geq \h^{\KS}(\mu_k)$. So, if we define
	\[ \shent_{\min}^{\edge}(\mu) = \min_k \ \shent^{\edge} (\mu_k) \]
then we get the following:

\begin{cor}
	Let $\Sigma$ be any sofic approximation to the free group $\FF_r = \langle s_1, \ldots, s_r \rangle$, and let $\mu \in \Prob^{\FF_r}(\A^{\FF_r})$. Then
		\[ \h_\Sigma(\mu) \leq \shent_{\min}^{\edge}(\mu). \]
\end{cor}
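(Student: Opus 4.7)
The corollary follows almost immediately by combining the preceding lemma with the general comparison between conditional edge entropy and Kolmogorov-Sinai entropy rate. The plan is as follows.

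First, I would apply the preceding lemma separately for each generator $s_k$, $k = 1, \ldots, r$, obtaining $\h_\Sigma(\mu) \leq \h^{\KS}(\mu_k)$ for each $k$. Since the left-hand side does not depend on $k$, we can take the minimum on the right: $\h_\Sigma(\mu) \leq \min_k \h^{\KS}(\mu_k)$.

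Next, I would invoke the inequality $\shent^{\edge}(\mu_k) \geq \h^{\KS}(\mu_k)$ for shift-invariant measures $\mu_k \in \Prob^{\ZZ}(\A^\ZZ)$, which was already noted in the paragraph preceding the corollary. The justification is standard: the KS entropy rate of a $\ZZ$-invariant process equals $\lim_{n \to \infty} \shent(X_0 \mid X_{-1}, \ldots, X_{-n})$, and this limit is nonincreasing in $n$, so in particular it is at most $\shent(X_0 \mid X_{-1}) = \shent^{\edge}(\mu_k)$ (using shift-invariance to reindex). Applying this to each $k$ and taking the minimum gives $\min_k \h^{\KS}(\mu_k) \leq \min_k \shent^{\edge}(\mu_k) = \shent_{\min}^{\edge}(\mu)$.

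Combining the two chains of inequalities yields the desired conclusion. There is no real obstacle here; the content lies entirely in the preceding lemma and the standard entropy-rate inequality, both of which are already cited in the excerpt.
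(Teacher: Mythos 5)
Your proposal is correct and matches the paper's argument exactly: the corollary is obtained by applying the preceding lemma for each generator, bounding $\h^{\KS}(\mu_k)$ above by the conditional edge entropy $\shent^{\edge}(\mu_k)$ via the standard monotonicity of $\shent(X_0 \mid X_{-1},\ldots,X_{-n})$, and taking the minimum over $k$. No gaps.
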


Define $\press^{\edge}(\cdot) = \shent_{\min}^{\edge}(\cdot) - u(\cdot)$. By the above, this is an upper bound for $\press_\Sigma(\cdot)$ for any $\Sigma$.

\begin{prop}
	Let $\delta_+$ be the point mass at the all-$(+1)$ configuration on $\{\pm 1\}^{\FF_r}$ and let $\Sigma$ be any sofic approximation to $\FF_r$. Then
		\[ \press_\Sigma(\delta_+) > \press^{\edge}(\mu^{\FB}) \]
	if and only if
		\[ r > \frac{\phi(1+e^{2J}) - \phi(e^{2J})}{2J} \]
	where
		\[ \phi(t) = t \log t . \]
\end{prop}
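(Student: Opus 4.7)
The plan is to reduce both sides of the target inequality to closed-form expressions in $J$ and $r$ and then rearrange. Since $\delta_+$ is a point mass, $\h_\Sigma(\delta_+) = 0$ for every sofic approximation $\Sigma$, and its energy density is $u(\delta_+) = -\frac{J}{2}\cdot 2r = -Jr$, so $\press_\Sigma(\delta_+) = Jr$ regardless of $\Sigma$.

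Next I would compute $\press^{\edge}(\mu^{\FB})$. At $t=0$, $\alpha(0)=\tfrac{1}{2}$ and $\beta(0)=\beta(-0)=\frac{1}{e^{2J}+1}$, so the one-dimensional marginal $\mu_k$ is the symmetric two-state Markov chain with uniform stationary distribution and flip probability $\beta(0)$. Hence $\shent_{\min}^{\edge}(\mu^{\FB}) = \shent(\beta(0))$, the binary entropy. A direct rearrangement gives
\[ \shent(\beta(0)) = \log\bigl(1+e^{2J}\bigr) - \frac{2J e^{2J}}{1+e^{2J}}. \]
For the energy density, the formula from Section 3.1 specializes at $t=0$ to $u(\mu^{\FB}) = -Jr(1-2\beta(0)) = -Jr\tanh J = -Jr\cdot\frac{e^{2J}-1}{e^{2J}+1}$.

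Combining, $\press_\Sigma(\delta_+) - \press^{\edge}(\mu^{\FB})$ equals
\[ Jr\left(1 - \frac{e^{2J}-1}{e^{2J}+1}\right) - \log\bigl(1+e^{2J}\bigr) + \frac{2J e^{2J}}{1+e^{2J}} = \frac{2Jr}{1+e^{2J}} - \log\bigl(1+e^{2J}\bigr) + \frac{2J e^{2J}}{1+e^{2J}}. \]
This is positive iff $2Jr > (1+e^{2J})\log(1+e^{2J}) - 2Je^{2J}$. The last step is to recognize $2Je^{2J} = e^{2J}\log(e^{2J}) = \phi(e^{2J})$ and $(1+e^{2J})\log(1+e^{2J}) = \phi(1+e^{2J})$, so the inequality becomes $r > \frac{\phi(1+e^{2J})-\phi(e^{2J})}{2J}$, as claimed.

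The calculation is essentially routine; the only mild subtlety is keeping the formulas for $\alpha, \beta, u$ consistent at $t=0$ and recognizing the closing algebraic identity involving $\phi$. I expect the main ``obstacle'' to be purely bookkeeping of signs in the energy term and verifying that the restriction to the $\langle s_k\rangle$ subgroup indeed gives a Markov chain whose Kolmogorov--Sinai entropy equals $\shent(\beta(0))$, so that $\shent_{\min}^{\edge}$ coincides with this value rather than being strictly smaller.
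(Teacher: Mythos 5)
Your proposal is correct and follows essentially the same route as the paper: compute $\press_\Sigma(\delta_+)=Jr$, evaluate $\shent^{\edge}(\mu^{\FB})=\shent\bigl(\tfrac{1}{1+e^{2J}}\bigr)$ and $u(\mu^{\FB})=-Jr\tanh J$ from the $t=0$ specialization, and rearrange into the $\phi$ identity. The closing worry about the KS entropy of $\mu_k$ is unnecessary here, since $\press^{\edge}$ is defined directly via the conditional edge entropy $\shent(X_1\mid X_0)$, which depends only on the single-edge marginal.
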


\begin{proof}
	Note
		\[ \h_\Sigma (\delta_+) = 0 \quad \text{and} \quad u(\delta_+) = -J r\]
	and, as we calculated above when considering $\f(\mu_t)$,
		\[ \shent^{\edge} (\mu^{\FB}) = \shent \left( \tfrac{1}{1+e^{2J}} \right) \quad \text{and} \quad u(\mu^{\FB}) = Jr \tfrac{ 1 - e^{2J}}{1+e^{2J}} .\]
	So the desired inequality $\press_\Sigma(\delta_+) > \press^{\edge}(\mu^{\FB})$ can be written out explicitly as
	\begin{align*}
		0- (-Jr) &> \left( - \left(\tfrac{1}{1+e^{2J}} \right) \log \left( \tfrac{1}{1+e^{2J}} \right) -\left(\tfrac{e^{2J}}{1+e^{2J}} \right) \log \left( \tfrac{e^{2J}}{1+e^{2J}} \right)\right) - Jr \tfrac{ 1 - e^{2J}}{1+e^{2J}} \\
	\intertext{which we can rearrange to}
		Jr (1+e^{2J}) + Jr (1- e^{2J}) &>  \log( 1+e^{2J}) - e^{2J} \log e^{2J} + e^{2J} \log (1+e^{2J}) \\
	\intertext{and then to the claimed inequality}
		2Jr &> \phi(1+e^{2J}) - \phi(e^{2J}). \qedhere
	\end{align*}
\end{proof}

Since $\phi$ is convex and $\log$ is concave,
	\[  \frac{\phi(1+e^{2J}) - \phi(e^{2J})}{2J} \leq \frac{1 \cdot \phi'(1+e^{2J})}{2J} = \frac{\log(1+e^{2J}) + 1}{2J} \leq \frac{\log(e^{2J}) + 1 \cdot \frac{1}{e^{2J}} + 1}{2J} \]
and a sufficient condition for $\mu^{\FB}$ to be nonequilibrium over every $\Sigma$ is therefore
 	\[ r > 1 + \frac{1+e^{-2J}}{2J} . \tag{$\ast$}\]
This is easier to work with while still being strong enough to prove the following restatement of Theorem \ref{thm:noneq}:

\begin{theorem}
	If $r \geq 2$ and either \[ J \geq 2 \cdot J_{uniq}(r) = 2 \arctanh \frac{1}{2r-1} = \log\frac{r}{r-1} \] or  \[ J \geq J_{rec}(r) = \arctanh [ (2r-1)^{-1/2}] \] then $\mu^{\FB}$ is nonequilibrium over every sofic approximation to $\FF_r$.
\end{theorem}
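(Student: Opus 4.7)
The plan is to invoke the sufficient condition $(\ast)$ derived above, apply a monotonicity reduction, and then verify the resulting inequality at each of the two threshold values of $J$. A short differentiation shows that $J \mapsto 1 + (1+e^{-2J})/(2J)$ is strictly decreasing on $(0,\infty)$ (the derivative has numerator $-4Je^{-2J} - 2(1+e^{-2J}) < 0$). Consequently, if $(\ast)$ holds at some $J_0$ then it holds for all $J \geq J_0$, so it suffices to verify $(\ast)$ at $J = 2J_{uniq}(r) = \log\frac{r}{r-1}$ and at $J = J_{rec}(r) = \arctanh((2r-1)^{-1/2})$ for each integer $r \geq 2$.

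For the first threshold, substitution gives the inequality $2(r-1)\log\frac{r}{r-1} > 1 + ((r-1)/r)^2$. Setting $x = 1/r \in (0, 1/2]$, the power series identity $-2(1-x)\log(1-x)/x = 2 - 2\sum_{j \geq 1} x^j/(j(j+1))$ rearranges it to $x - x^2 > 2\sum_{j \geq 2} x^j/(j(j+1))$. Using the crude bound $2/(j(j+1)) \leq 1/3$ for $j \geq 2$, the right side is at most $x^2/(3(1-x))$, and after clearing denominators the resulting sufficient condition reduces to the quadratic inequality $3x^2 - 7x + 3 > 0$. Its smaller root is $(7-\sqrt{13})/6 \approx 0.566$, which exceeds $1/2$, so the inequality holds throughout $(0, 1/2]$.

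For the second threshold, set $a = \sqrt{2r-1} \geq \sqrt{3}$, so $e^{\pm 2J_{rec}} = (a\pm 1)/(a\mp 1)$ and $r-1 = (a-1)(a+1)/2$; condition $(\ast)$ becomes $h(a) \coloneqq (a-1)(a+1)^2 \log\frac{a+1}{a-1} - 4a > 0$. I would verify $h(\sqrt{3}) > 0$ by direct computation (the two sides $2(\sqrt{3}+1)\log(2+\sqrt{3})$ and $4\sqrt{3}$ can be compared rigorously with a short numerical estimate) and then show $h'(a) > 0$ for $a > 1$: from $h'(a) = (3a^2+2a-1)\log\frac{a+1}{a-1} - 2a - 6$ and the lower bound $\log((a+1)/(a-1)) > 2/a$ (immediate from the series $\log\frac{a+1}{a-1} = 2\sum_{k \geq 0} 1/((2k+1)a^{2k+1})$) we obtain $h'(a) > (6a+4-2/a) - 2a - 6 = 2(2a+1)(a-1)/a > 0$ for $a > 1$.

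The main obstacle is the near-tightness of $(\ast)$ at the thresholds. For Case 1, both sides of the simplified inequality approach each other in relative terms as $r \to \infty$, and the bound $2/(j(j+1)) \leq 1/3$ is essentially as strong as possible: the smaller root of the resulting quadratic lies only just above $1/2$, so this method would not push substantially further (consistent with the remark in the introduction that the constant $\tfrac{1}{2}$ in front of $J_{uniq}$ is not optimal). For Case 2, the margin at $r = 2$ is only a few percent, so the direct check at $a = \sqrt{3}$ is the delicate part; the monotonicity argument then propagates to all larger $r$ cleanly.
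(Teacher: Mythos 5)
Your proof is correct, and its skeleton is the one the paper uses: start from the sufficient condition $(\ast)$, use that $\rho(J)=1+\frac{1+e^{-2J}}{2J}$ is decreasing to reduce to the two threshold values of $J$, and verify the resulting inequalities by elementary calculus. The verifications differ in their details. For the $2J_{uniq}(r)$ threshold the paper also uses the Taylor series of $\log\frac{r}{r-1}$, but truncates after two terms to get $r\log\frac{r}{r-1}>1+\frac{1}{2r}$ and concludes in one line; you keep the full series and control the tail via $2/(j(j+1))\le 1/3$, which costs you the quadratic $3x^2-7x+3>0$ but makes visible how little slack this route has (consistent with the paper's remark that the constant $2$ is not optimal). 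For the $J_{rec}(r)$ threshold the two arguments are structurally different: the paper inverts the threshold function and proves $J_{rec}^{-1}(J)>\rho(J)$ on all of $(0,J_{rec}(2)]$ by exhibiting an affine function dominating a strictly convex one that agrees with it at $J=0$, whereas you substitute $a=\sqrt{2r-1}$ and show $h(a)>0$ at $a=\sqrt 3$ together with $h'(a)>0$ on $(1,\infty)$. Both versions hinge on the same single numerical check at $r=2$ (the paper's check at $J=J_{rec}(2)$ is exactly your $h(\sqrt 3)>0$), and your monotonicity computation, resting on the honest series bound $\log\frac{a+1}{a-1}>2/a$, is just as rigorous as the paper's convexity observation. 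Either route is fine.
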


\begin{proof}
	Define
		\[ \rho(J) = 1 + \frac{1+e^{-2J}}{2J} . \]
	First consider the condition involving $J_{uniq}(t)$. 
	It is easy to see that $\rho(J)$ is decreasing, so it suffices to show that
		\[ r > \rho(2 J_{uniq}(r)) , \quad r \geq 2 . \]
	To check that inequality, we rearrange it as follows:
	\begin{align*}
		r &> 1 + \frac{1 + (\frac{r-1}{r})^2}{2 \log \frac{r}{r-1}} \\
		&= 1 + \frac{1 + \frac{r^2 - 2r + 1}{r^2}}{2 \log \frac{r}{r-1}} \\
		r^3 &> r^2 + \frac{2r^2 - 2r + 1}{2 \log \frac{r}{r-1}} \\
		r^2 (r-1) 2 \log \frac{r}{r-1} &> 2r(r-1)+1 \\
		2r(r-1) \left( r \log \frac{r}{r-1} - 1\right) &> 1.
	\end{align*}
	Now using a standard Taylor series, for $r \geq 2$
		\[ r \log \frac{r}{r-1} = -r \log(1 - \tfrac{1}{r}) = r \left( \tfrac{1}{r} + \tfrac{1}{2r^2} + \cdots \right) > 1 + \tfrac{1}{2r} \]
	so
		\[ 2r(r-1) \left( r \log \frac{r}{r-1} - 1\right) > r-1 \]
	and in particular the desired inequality holds for $r \geq 2$.
	
	Now consider the reconstruction threshold condition. Since $\rho(J)$ is a decreasing bijection $(0,\infty) \to (1,\infty)$, it has a decreasing inverse function $\rho^{-1} \colon (1,\infty) \to (0,\infty)$. Our sufficient condition ($\ast$) can then be written as $J > \rho^{-1}(r)$. We want to show that $J_{rec}(r) > \rho^{-1}(r)$ for $r \geq 2$. It suffices to show that
		\[ J_{rec}^{-1}(J) > \rho(J) \quad \text{for} \quad 0 < J \leq J_{rec}(2) = \arctanh (\tfrac{1}{\sqrt{3}}) : \tag{$\dagger$}\]
	if we can show this, then given $r \geq 2$ we have $0 < J_{rec}(r) \leq J_{rec}(2)$ so
	\begin{align*}
		J_{rec}^{-1}( J_{rec}(r) ) &> \rho( J_{rec}(r) ) \\
		r &> \rho( J_{rec}(r) )\\
		\rho^{-1}(r) &< J_{rec}(r),
	\end{align*}
	where in the last line we use that $\rho^{-1}$ is decreasing.
	
	Now, to show ($\dagger$), first rearrange as follows:
	\begin{align*}
		J_{rec}^{-1}(J) &> \rho(J) \\
		\frac{1}{2 \tanh^2 J} + \frac{1}{2} &> 1 + \frac{1+e^{-2J}}{2J} \\
		\frac{1}{\tanh^2 J} &> \frac{J + 1 + e^{-2J}}{J} \\
		J \cosh^2 J &> \sinh^2 J \, (J+1 + e^{-2J}) \\
		J (e^{2J} + e^{-2J} + 2) &> (e^{2J} + e^{-2J} - 2) (J+1 + e^{-2J}) \\
		4J + 2 &> e^{2J} + e^{-2J} + 1 + e^{-4J} - 2e^{-2J}\\
		4J+2 &>  2 \cosh 2J + (e^{-2J} - 1)^2 .
	\end{align*}
	Now it is easy to check that the right-hand side is a strictly convex function of $J$, that both sides are equal to 2 when $J=0$, and that strict inequality holds when $J = J_{rec}(2)$. It  follows that the inequality holds on $(0, J_{rec}(2)]$.
\end{proof}

\textbf{Remarks.}
\begin{enumerate}
	\item It is easy to see numerically that $J_{rec}(r) > 2 J_{uniq}(r)$ for $r \geq 3$. In particular, an alternative proof of the $J_{rec}$ condition could be to verify that inequality and then check the $r=2$ case separately. Even though the $J_{rec}$ condition gives a weaker result for most values of $r$, we include it to better complement Theorem~\ref{thm:existence}.
	\item The constant 2 in the condition $J \geq 2 J_{uniq}(r)$ does not appear to be optimal. Numerically, it seems that without the reduction to the simpler function $\rho(J)$ it is possible to reduce the condition to $J \geq 1.65 J_{uniq}(r)$. If we only ask for a condition for sufficiently large $r$ (instead of all $r \geq 2$) then it seems possible to reduce the constant down to at least 1.39. Ideally that constant would be 1, but it does \emph{not} appear that this method (using this particular upper bound on sofic entropy to compare to $\delta_+$) could achieve that.

Comparison with the plus state $\mu^+$ (which is equilibrium by Theorem \ref{thm:isingeq}) instead of $\delta_+$ appears more promising in the high-degree limit; see Figure~\ref{fig:pressurecomp}. More specifically, the following appears to be true:

\begin{conj}
	For every $\varepsilon > 0$, for all large enough $r$ if $J \geq (1+\varepsilon) J_{uniq}(r)$ then $\press^{\edge}(\mu^{\FB}) < \press_\Sigma(\mu^+)$. In particular, $\mu^{\FB}$ is $\Sigma$-nonequilibrium for every $\Sigma$.
\end{conj}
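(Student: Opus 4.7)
The plan is to reduce the conjecture to a Curie--Weiss asymptotic analysis. By Theorem~\ref{thm:isingeq} (and the corollary following it), $\mu^+$ is $\Sigma$-equilibrium for every sofic approximation $\Sigma$, so $\press_\Sigma(\mu^+) = \press_\f(\mu^+)$ depends only on $r$ and $J$, and the target becomes the $\Sigma$-independent inequality $\press^{\edge}(\mu^{\FB}) < \press_\f(\mu^+)$. I would split the range $J \geq (1+\varepsilon) J_{uniq}(r)$ into a deep regime $J \geq 2 J_{uniq}(r)$, where Theorem~\ref{thm:noneq} already gives $\press^{\edge}(\mu^{\FB}) < \press_\Sigma(\delta_+) \leq \press_\Sigma(\mu^+)$ (the second inequality because $\mu^+$ is equilibrium), and a near-critical window $(1+\varepsilon) J_{uniq}(r) \leq J \leq 2 J_{uniq}(r)$, which is the main work.

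In the near-critical window set $c := (2r-1) J$; since $(2r-1) J_{uniq}(r) \to 1$, the admissible $c$-values fill the compact interval $[1+\varepsilon, 2]$ in the limit. A Taylor expansion of $\log\cosh$ reduces the self-consistency equation $t = \tfrac{2r-1}{2}\log\tfrac{\cosh(t+J)}{\cosh(t-J)}$ to $t = (2r-1)J\tanh t + O((2r-1) J^3(1+t^2))$, with $O(1/r)$ error in the scaling $J = O(1/r)$, so $t_+$ converges uniformly on compact $c$-ranges to the positive solution $t^*(c)$ of the Curie--Weiss fixed-point equation $t = c\tanh t$. The Markov chain formulas derived earlier then give $u(\mu_t) = -Jr \tanh^2 t + O(J^2 r)$ (leading term from substituting $\alpha = \beta = \tfrac{1}{e^{2t}+1}$ at $J=0$), and writing $\f(\mu_t) = \shent(\alpha(t)) + r \Delta(J,t)$ with $\Delta(J,t) = \shent^{\edge}(\mu_t) - \shent(\alpha(t))$, a short Taylor expansion (using $\alpha = \beta$ and $\beta(-t) = 1-\beta(t)$ at $J=0$, from which $\Delta(0,t) = 0$ and a cancellation gives $\partial_J \Delta(0,t) = 0$) yields $\Delta(J,t) = O(J^2)$, hence $r\Delta \to 0$. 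Consequently, with $m := \tanh t^*(c) \in (0,1)$,
\[ \press^{\edge}(\mu^{\FB}) \to \log 2, \qquad \press_\f(\mu^+) \to \shent\!\big(\tfrac{1-m}{2}\big) + \tfrac{c}{2} m^2. \]
The difference is exactly $\tfrac{1}{2} F(m)$ for the Curie--Weiss free-energy gap $F(m) = cm^2 - (1-m)\log(1-m) - (1+m)\log(1+m)$, which is strictly positive for every $c > 1$ (with quartic vanishing $F(m) = m^4/6 + O(m^6)$ at the bifurcation $c = 1$); by continuity, $F(m^*(c))$ is bounded below by some $\delta > 0$ on the compact interval $[1+\varepsilon, 2]$.

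The cleanest way to promote these pointwise limits into the uniform bound needed for the claim is a compactness-and-contradiction argument: if the conjecture failed one could find a sequence $(r_k, J_k)$ with $r_k \to \infty$ and $J_k \in [(1+\varepsilon) J_{uniq}(r_k), 2 J_{uniq}(r_k)]$ along which $\press_\f(\mu^+) \leq \press^{\edge}(\mu^{\FB})$; extracting a subsequence with $(2r_k - 1)J_k \to c_\infty \in [1+\varepsilon, 2]$ and applying the asymptotics would force $F(m^*(c_\infty)) \leq 0$, contradicting Curie--Weiss positivity. The main obstacle is controlling the implicit function argument for $t_+$ uniformly in $c$ near the bifurcation point: the branch of positive solutions to $t = c\tanh t$ emerging at $c = 1$ is non-analytic there, so some care is required to keep the error terms in the asymptotic expansion from blowing up as $\varepsilon \to 0$. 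Separately, verifying that the error estimates on $u(\mu_t)$ and $\f(\mu_t)$ really are uniform in $t$ on a neighborhood of $[0, t^*(2)]$ and in $J$ on $[0, 2 J_{uniq}(r)]$ is routine but tedious.
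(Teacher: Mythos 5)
This statement is stated in the paper only as a conjecture, supported by the numerics of Figure~\ref{fig:pressurecomp}; there is no proof in the paper to compare against. Your route is precisely the one the remark preceding the conjecture gestures at (comparison with $\mu^+$ in the high-degree limit), and the core computation checks out: with $c=(2r-1)J$ confined to a compact subset of $(1,\infty)$ one has $\alpha(t_+)\to\tfrac{1-m}{2}$, the edge correlation $\to m^2$, and the single-edge mutual information $-\Delta(J,t)$ is $O(J^2)$ locally uniformly in $t$ (it is smooth, nonnegative for all real $J$, and vanishes at $J=0$, so its $J$-derivative vanishes there), whence $r\Delta\to 0$ and the limiting gap is $\tfrac12 F(m^*(c))$. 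Positivity of $F(m^*(c))$ for $c>1$ is correct: writing $G=\tfrac12 F$, one has $G'(m)=cm-\arctanh m$, which is concave on $(0,1)$, vanishes at $0$ and at $m^*$, and is positive in between, so $G(m^*)>G(0)=0$.

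Two points to tighten. First, the obstacle you flag as the main one — non-analyticity of the branch $m^*(c)$ at the bifurcation $c=1$ — is not actually an obstacle for the statement as quantified: $\varepsilon$ is fixed \emph{before} $r\to\infty$, so $c\geq(1+\varepsilon)(2r-1)\arctanh\frac{1}{2r-1}\geq 1+\varepsilon$ stays in a compact subset of $(1,2]$ on which the fixed point of $t=c\tanh t$ is uniformly non-degenerate ($c\,\mathrm{sech}^2 t^*<1$ by concavity of $c\tanh t-t$), and the perturbation of the finite-$r$ consistency equation is $O(rJ^3)=O(r^{-2})$; the implicit-function step is therefore uniform and your compactness-and-contradiction wrapper closes. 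Second, your opening reduction is cited incompletely: Theorem~\ref{thm:isingeq} together with its corollary gives that $\press_\Sigma(\mu^+)$ takes the \emph{same} value for every deterministic $\Sigma$, but identifying that common value as $\press_{\f}(\mu^+)$ — which is what your asymptotics compute — additionally requires Proposition~\ref{prop:tailtrivialf} applied to $\mu^+$ (legitimate, since $\mu^+$ is tail-trivial at every temperature and hence satisfies the second-moment criterion). With that citation added and the routine uniform error estimates written out, this reads as a correct proof of the conjectured inequality rather than a sketch.
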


It may be possible that something stronger is true: that $\mu^{\FB}$ is always nonequilibrium for $J > J_{uniq}$ as long as $r \geq 2$. But the upper bound $\press^{\edge}$ does not seem sharp enough to achieve this.
\end{enumerate}

\begin{figure}
\includegraphics{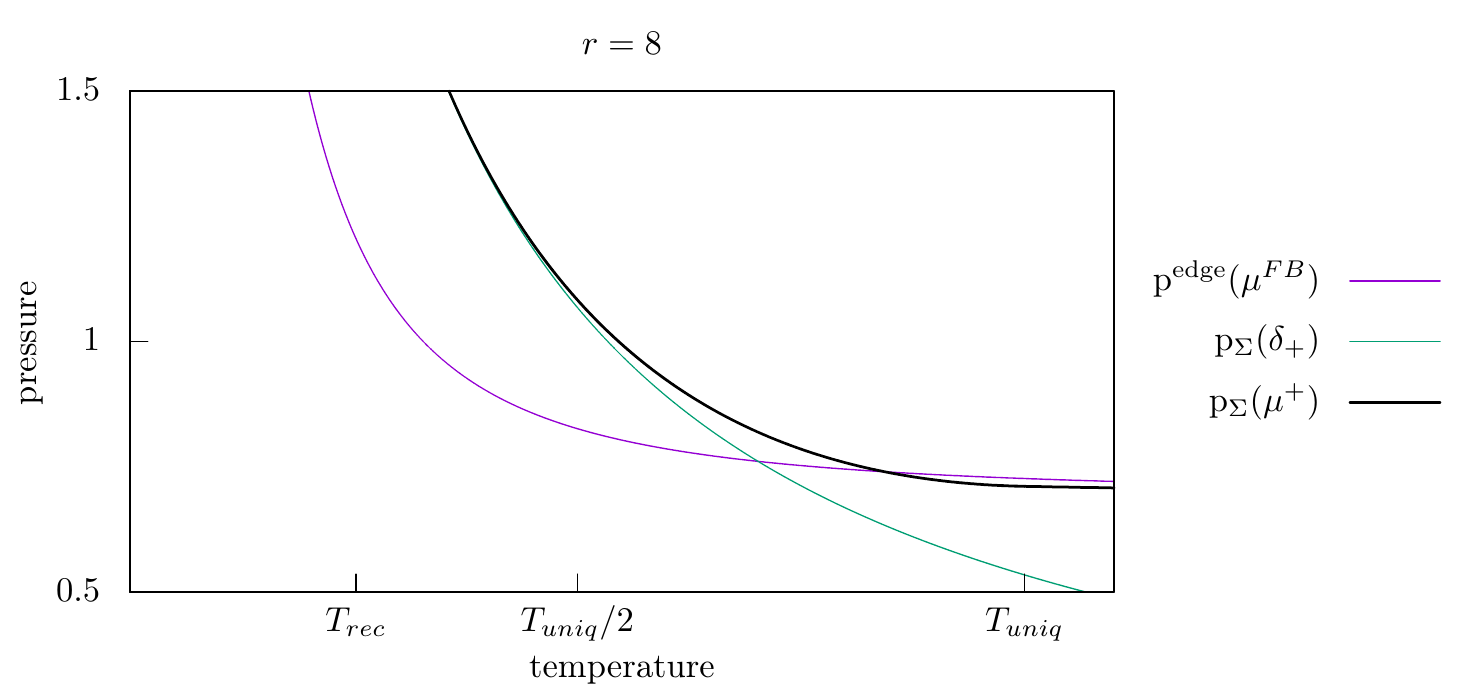}
\includegraphics{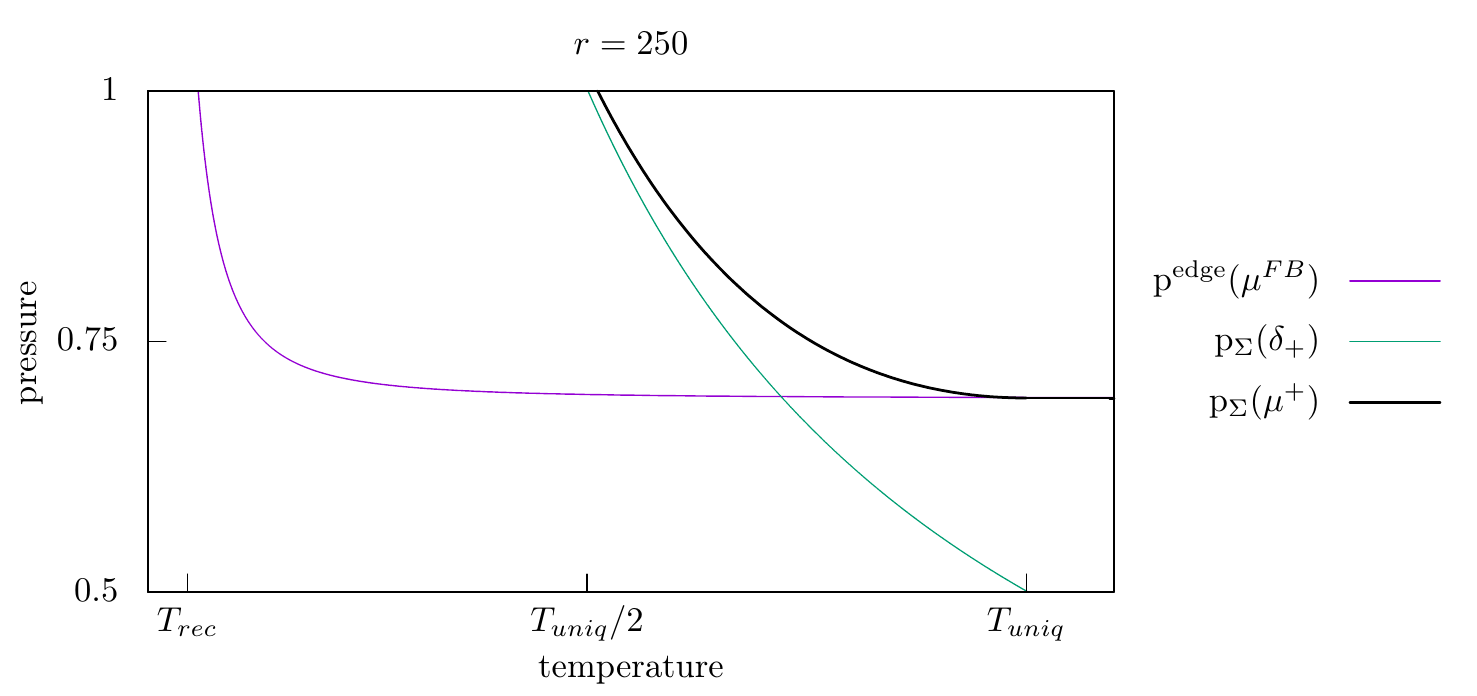}
\caption{Comparison of upper bound $\press^{\edge}$ for pressure of free boundary state with pressures of $\delta_+$ and $\mu^+$ (which is an equilibrium state, so has maximal pressure). Note that the latter two quantities are $\Sigma$-independent. In the notation above, the temperature is $T = 1/J$.
When the rank $r$ is higher, it seems like $\press^{\edge}(\mu^{\FB}) < \press_\Sigma(\mu^+)$ for temperatures much closer to the uniqueness threshold.}
\label{fig:pressurecomp}
\end{figure}

\section{Local limits and equilibrium measures (Proof of Theorem \ref{thm:isingeq})}

Recall that if $\sigma \colon \Gamma \to \Sym(V)$ is a map and $\calO \subset \Prob(\A^\Gamma)$ is any set then we defined $\Omega(\sigma, \calO) = \{ \mb{x} \in \A^V \st P_{\mb{x}}^\sigma \in \calO\}$ to be the set of ``microstates'' (labelings of $V$ by symbols in $\A$) such that the empirical distribution is in $\calO$ (which is typically an open neighborhood of some measure $\mu$ of interest).

Similarly, if $\zeta \in \Prob(\A^V)$ is the law of a random microstate then we can define the average empirical distribution
	\[ P_\zeta^\sigma = \sum_{\mb{x} \in \A^V} P_{\mb{x}}^\sigma \cdot \zeta\{\mb{x}\} \]
and define
	\[ \bOmega(\sigma, \calO) = \{ \zeta \in \Prob(\A^V) \st P_\zeta^\sigma \in \calO \} \]
to be the set of laws of random microstates whose average empirical distribution is in $\calO$. Note that we do \emph{not} require $\zeta$ to be supported on microstates whose individual empirical distributions are in $\calO$: in some cases below there may be no such microstates.

\begin{defn}
	Suppose $\Sigma = \{ \sigma_n \colon \Gamma \to \Sym(V_n)\}$ is a random sofic approximation to $\Gamma$. For each $n \in\NN$ and open neighborhood $\calU \supset \calX$, let $\xi_n^\calU = \widetilde{\xi}_{\sigma_n}^\calU \in \Prob(\A^{V_n})$ denote the restricted finitary Gibbs states for a potential $u$ (with some continuous extension $\tilde{u} \colon \A^\Gamma \to \RR$ if necessary).
	
	We say that $\mu$ is the local weak limit over $\Sigma$ of the finitary Gibbs states for $u$, or, more briefly $\lwlim(u,\Sigma)= \mu$, if for every open neighborhood $\calO \ni \mu$ 
		\[ \limsup_{\calU \downarrow \Prob^\Gamma(\calX)} \limsup_{n \to \infty} \frac{1}{\abs{V_n}} \log \PP\{ \xi_n^\calU \not\in \bOmega(\sigma_n,\calO) \} = -\infty . \]
	The $\limsup$ over $\calU$ here is over the directed set of open neighborhoods of $\Prob^\Gamma(\calX)$.
\end{defn}

Note that some call this notion ``local convergence on average.'' We require a superexponentially small probability of the empirical measure not being close to $\mu$ since even exponentially small events can affect the average pressure; see the proof of the third part of Lemma~\ref{lem:hmodproperties}.

\begin{lemma}
\label{lem:uniquelimits}
	For any fixed $u$, if $\lwlim(u,\Sigma)$ exists for every deterministic sofic approximation $\Sigma$, then there is some $\bar\mu$ such that $\lwlim(u,\Sigma) = \bar\mu$ for every sofic approximation $\Sigma$, random or deterministic.
\end{lemma}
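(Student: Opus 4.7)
The plan is to proceed in two steps. First, I show the local weak limit across all deterministic sofic approximations is unique; this defines $\bar\mu$. Then, I extend to random sofic approximations by a derandomization argument.

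For the first step, given two deterministic sofic approximations $\Sigma^{(1)}, \Sigma^{(2)}$ with $\lwlim(u, \Sigma^{(i)}) = \mu^{(i)}$ (both existing by hypothesis), interleave them into a sequence $\Sigma$ whose odd terms come from $\Sigma^{(1)}$ and even terms from $\Sigma^{(2)}$. The interleaved $\Sigma$ is still a sofic approximation, so by hypothesis has a local weak limit $\mu$. The smallness of the failure probabilities along the full sequence immediately transfers to each subsequence (a subsequential $\limsup$ of nonpositive terms is bounded by the full $\limsup$), so $\lwlim(u, \Sigma^{(i)}) = \mu$, giving $\mu^{(1)} = \mu = \mu^{(2)}$. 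Denote this common value $\bar\mu$.

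For the second step, I argue by contradiction: suppose some random sofic $\Sigma = (\sigma_n)$ has $\lwlim(u, \Sigma) \ne \bar\mu$. Negating the definition yields $\calO \ni \bar\mu$ and $C > 0$ such that every neighborhood $\calU_0$ of $\Prob^\Gamma(\calX)$ admits some $\calU \subseteq \calU_0$ with $\PP\{\xi_n^\calU \notin \bOmega(\sigma_n, \calO)\} > e^{-C\abs{V_n}}$ for infinitely many $n$. Fix a countable decreasing basis $\calU_{0,m} \downarrow \Prob^\Gamma(\calX)$ and an exhaustion $F_m \uparrow \Gamma$. Inductively select nested $\calU_m \subseteq \calU_{0,m} \cap \calU_{m-1}$ and indices $n_m > n_{m-1}$ along which the failure event at $(\calU_m, \calO)$ has probability greater than $e^{-C\abs{V_{n_m}}}$. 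Because the event that $\sigma_{n_m}$ is $(F_m, 1/m)$-sofic has super-exponentially small complement, its intersection with the failure event has positive probability, so we may pick a realization $\sigma^*_{n_m}$ lying in both. Fill the remaining indices with any sofic realizations to obtain a deterministic sofic approximation $\{\sigma^*_n\}$. By Step 1, $\lwlim(u, \{\sigma^*_n\}) = \bar\mu$, so there is a neighborhood $\calV_0$ of $\Prob^\Gamma(\calX)$ such that $\xi_n^\calU \in \bOmega(\sigma^*_n, \calO)$ for every $\calU \subseteq \calV_0$ and all large $n$. For $m$ large enough that $\calU_m \subseteq \calV_0$, applying this with $\calU = \calU_m$ contradicts the construction at $n = n_m$.

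The main obstacle is reconciling a quantifier order in Step 2: the threshold past which the deterministic convergence applies depends on $\{\sigma^*_n\}$, which was itself constructed using the indices $n_m$. I would handle this by a further diagonal refinement---revising $n_m$ upward (using that the failure-probability condition holds along an infinite index set) to exceed the thresholds forced by the preceding construction, and iterating until a stable choice is reached. An alternative strategy would be to extract a single witness neighborhood $\calV_0$ uniform over all admissible deterministic approximations via a compactness argument; that would sidestep the bookkeeping entirely but demands some equicontinuity of the Gibbs measures in $\sigma$.
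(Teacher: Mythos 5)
Your Step 1 (interleaving two deterministic approximations) is exactly the paper's argument and is fine. The gap is in Step 2. Your construction produces, for each $m$, a \emph{single} index $n_m$ at which the chosen realization satisfies $\xi_{n_m}^{\calU_m}\notin\bOmega(\sigma^*_{n_m},\calO)$. But $\lwlim(u,\{\sigma^*_n\})=\bar\mu$ only gives you: there is $\calV_0$ such that for each fixed $\calU\subseteq\calV_0$ one has $\xi_n^{\calU}\in\bOmega(\sigma^*_n,\calO)$ for all $n$ past some threshold $N(\calU)$. To contradict that you need, for some single fixed $\calU\subseteq\calV_0$, \emph{infinitely many} bad indices; one bad index per $\calU_m$ does not suffice, since badness at $\calU_m$ for $m>m_0$ says nothing about the different measure $\xi_n^{\calU_{m_0}}$. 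This is precisely the quantifier clash you flag, but your proposed repair --- pushing each $n_m$ past the threshold and ``iterating until a stable choice is reached'' --- is circular: $N(\calU_{m_0})$ is a function of the entire sequence $\{\sigma^*_n\}$, which changes each time you revise an index, and nothing forces the process to stabilize.

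The construction can be salvaged without that iteration: the negated definition gives, for each $m$, failure probability exceeding $e^{-C\abs{V_n}}$ at $\calU_m$ for an \emph{infinite} set $A_m$ of indices. Choose pairwise disjoint infinite subsets $A'_m\subseteq A_m$ and, at each $n\in A'_m$, pick a realization lying in the $\calU_m$-failure event intersected with the soficity event (nonempty for large $n$ since soficity fails with superexponentially small probability). The resulting deterministic sofic approximation is then bad at $\calU_m$ along infinitely many indices for \emph{every} $m$, and choosing $m_0$ with $\calU_{m_0}\subseteq\calV_0$ yields the contradiction. The paper's own route is different and avoids sampling realizations: it first upgrades the deterministic statement to a uniform one --- for every $\calO\ni\bar\mu$ and all small enough $\calU$ there exist $(F,\delta)$ such that \emph{every} $(F,\delta)$-sofic $\sigma$ (of large enough size) has $\xi^{\calU}_\sigma\in\bOmega(\sigma,\calO)$ --- and then notes that a random sofic approximation is $(F,\delta)$-sofic with superexponentially high probability. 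Your dismissed ``alternative strategy'' is essentially this; it requires no equicontinuity of the Gibbs measures, only the hypothesis applied to sofic approximations assembled from hypothetical counterexamples, via the same ``infinitely many bad terms per neighborhood'' device as above.
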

\begin{proof}
	Suppose there are distinct $\bar\mu_1 \ne \bar\mu_2$ and deterministic sofic approximations $\Sigma_1 = (\sigma^1_n)_{n=1}^\infty, \Sigma_2 = (\sigma^2_n)_{n=1}^\infty$ to $\Gamma$ such that $\lwlim(u,\Sigma_i) = \mu_i$ for $i=1,2$. Then if $\tilde\Sigma = (\sigma^1_1, \sigma^2_1, \sigma^1_2, \sigma^2_2, \ldots)$ then $\lwlim(u, \tilde\Sigma)$ does not exist. This contradicts the premise that the limit exists over every deterministic $\Sigma$, so there must be some $\bar\mu$ such that $\lwlim(u, \Sigma) = \bar\mu$ if $\Sigma$ is any deterministic sofic approximation to $\Gamma$.
	
	In the deterministic case, ``$\lwlim(u,\Sigma)=\mu$'' means that for any $\calO \ni \mu$, for all small enough $\calU \supset \Prob^\Gamma(\calX)$ we have $\xi_n^\calU \in \bOmega(\sigma_n, \calO)$ for all large enough $n$.
	
	So for every open $\calO \ni \bar\mu$, for all small enough $\calU \supset \Prob^\Gamma(\calX)$ there exist $F,\delta$ such that if $\sigma$ is $(F,\delta)$-sofic then $\xi_\sigma^\calU \in \bOmega(\sigma_n, \calO)$. But if $\Sigma$ is a random sofic approximation then, by definition, $\PP\{ \sigma_n \text{ is $(F,\delta)$-sofic} \}$ approaches 1 superexponentially fast. The result follows.
\end{proof}

Here we use the formulation of the ergodic decomposition of $\mu \in \Prob^\Gamma(\A^\Gamma)$ as a measure with barycenter $\mu$, found for example in \cite[Section 12]{phelps2001}: the ergodic decomposition $\theta$ is the unique element of $\Prob(\Prob^\Gamma(\A^\Gamma))$ such that
	\[ \mu(f) = \int \nu(f)\, \theta(d \nu) \quad \text{ for all continuous } f \colon \A^\Gamma \to \RR \]
and $\theta(E) = 0$ for any measurable set containing no ergodic measures.\footnote{The reference \cite{phelps2001} refers to the Baire rather than Borel $\sigma$-algebra, but these coincide on $\A^\Gamma$ \cite[Exercise 7.2.8]{cohn2013}.}

\begin{defn}
	A (random or deterministic) sofic approximation $\Sigma$ to $\Gamma$ is ``\good{}'' if for any $\mu \in \Prob^\Gamma(\A^\Gamma)$, if $\theta$ is the ergodic decomposition of $\mu$ then
		\[ \h_\Sigma(\mu) \leq \int \h_\Sigma(\nu)\, \theta(d\nu) . \]
\end{defn}

For example, $\f$ is \good{} because the $\f$-invariant satisfies this inequality \cite{seward2016}. Also, if $\Sigma$ is an expander sequence then $\Sigma$ is \good{}: in this case, the left-hand side in the definition is $-\infty$ unless $\mu$ is ergodic.

Since the Kolmogorov-Sinai entropy rate of a shift system over an amenable group satisfies the ergodic decomposition formula \cite[Chapter 5]{moulinollagnier1985}, every sofic approximation to an amenable group is good.

Sofic approximations without this property do exist. The following example was suggested by Lewis Bowen:

\begin{example}
	Let $\mu_1$ be an ergodic measure with some (deterministic) $\Sigma_1$ such that $\h_{\Sigma_1}(\mu_1) = -\infty$; for example we could take $\mu_1$ to be the free-boundary state for the Ising model on a free group $\Gamma$ at temperature low enough for the $\f$-invariant to be negative. Let $\Sigma_2$ be another sofic approximation with $\h_{\Sigma_2}(\mu_1) \geq 0$, and let $\mu_2$ be a Bernoulli shift on $\{\pm 1\}^\Gamma$ (which has positive entropy over any sofic approximation \cite[Proposition 2.2]{bowen2010b}). Let $\mu = \frac{1}{2}(\mu_1+\mu_2)$. Note that since $\mu_1,\mu_2$ are ergodic this is the ergodic decomposition of $\mu$. Let $\Sigma$ be the disjoint union of $\Sigma_1, \Sigma_2$: if we think of sofic approximations as convergent graph sequences, then taking unions at each step of the sequence produces a new sequence with the same limit.
	
	Assuming that the finite sets $V_n$ along the sofic approximations are of essentially the same size, by ergodicity of $\mu_1$ and the assumption that $\h_{\Sigma_1}(\mu_1) = -\infty$, we also have $\h_\Sigma(\mu_1) = -\infty$. But $\h_\Sigma(\mu) \geq \frac{1}{2}(\h_{\Sigma_2}(\mu_1) + \h_{\Sigma_1}(\mu_2)) > -\infty$, so $\Sigma$ is not \good{}. \hfill $\triangleleft$
\end{example}

The following theorem is the main result of this section. It is used in Section~\ref{sec:eqconsequence} below to prove Theorem \ref{thm:isingeq}.

\begin{theorem}
\label{thm:eqgeneral}
	Suppose that for some random sofic approximation $\Sigma$ we have $\lwlim(u,\Sigma) = \bar\mu$. Then
	\begin{enumerate}
		\item $\bar\mu$ is a mixture of equilibrium states: There is some $\mathfrak{p} \in \Prob(\Prob^\Gamma(\calX))$ with barycenter $\bar\mu$ such that $\mathfrak{p}$-a.e.~$\nu$ is $\Sigma$-equilibrium.
		\item The maximal value of $\press_\Sigma(\cdot)$, attained by equilibrium measures, is
		\[ \lim_{\calU \downarrow \Prob^\Gamma(\calX)} \limsup_{n \to \infty} \frac{1}{\abs{V_n}} \log \EE_{\sigma_n} \exp \{ \widetilde{\Press}(\xi_n^\calU) \} . \]
		\item If $\Sigma$ is good, then the ergodic decomposition of $\bar\mu$ gives full measure to the set of $\Sigma$-equilibrium measures.
	\end{enumerate}
\end{theorem}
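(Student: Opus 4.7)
My plan is to establish part 2 first, since the quantitative bounds it provides drive the mixture construction in part 1. The ``$\geq$'' direction follows from the pointwise lower bound
\[ \tilde Z_{\sigma_n, \calU} \geq \abs{\Omega(\sigma_n, \calO)} \exp\bigl( -\abs{V_n}(u(\mu) + \varepsilon) \bigr) \]
for any $\mu \in \Prob^\Gamma(\calX)$ and small enough $\calO \ni \mu$ with $\calO \subset \calU$: taking $\EE_{\sigma_n}$, $\log$, $\limsup_n$, $\inf_\calO$, $\lim_\calU$, and finally supremum over $\mu$ gives the lower bound $\geq \press_\Sigma^{\max}$. For ``$\leq$'', cover $\Prob^\Gamma(\calX)$ by finitely many small cells $P_i$ with representatives $\nu_i$, chosen small enough that $\tilde u$ varies by at most $\varepsilon$ across $P_i$ and $\limsup_n \frac{1}{\abs{V_n}} \log \EE_{\sigma_n} \abs{\Omega(\sigma_n, P_i)} \leq \h_\Sigma(\nu_i) + \varepsilon$ (possible by compactness and upper semicontinuity of $\h_\Sigma$). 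Then $\tilde Z_{\sigma_n, \calU} \leq \sum_i \abs{\Omega(\sigma_n, P_i)} e^{-\abs{V_n}(u(\nu_i) - \varepsilon)}$, and the finite number of summands contributes only a negligible $\log k / \abs{V_n}$ term, yielding $\max_i \press_\Sigma(\nu_i) + O(\varepsilon) \leq \press_\Sigma^{\max} + O(\varepsilon)$.

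For part 1, define the annealed probability measure
\[ \tilde Q_{n,\calU}(B) = \frac{\EE_{\sigma_n} \tilde Z_{\sigma_n, B \cap \calU}}{\EE_{\sigma_n} \tilde Z_{\sigma_n, \calU}} \in \Prob\bigl(\Prob(\A^\Gamma)\bigr) \]
and let $\mathfrak{p}$ be any weak-$*$ subsequential limit along a diagonal through $\calU \downarrow \Prob^\Gamma(\calX)$ and $n \to \infty$ (existing by compactness). The barycenter of $\tilde Q_{n,\calU}$ equals $\EE_{\sigma_n}[\tilde Z_{\sigma_n, \calU} P_{\xi_n^\calU}^{\sigma_n}] / \EE_{\sigma_n} \tilde Z_{\sigma_n, \calU}$; since $\tilde Z$ is deterministically at most $e^{C \abs{V_n}}$ while $\EE \tilde Z$ grows at exponential rate $\press_\Sigma^{\max}$ (by part 2), and since $\lwlim(u,\Sigma) = \bar\mu$ gives \emph{superexponential} decay of $\PP\{P_{\xi_n^\calU}^{\sigma_n} \notin \calO\}$, the bad-$\sigma_n$ event contributes a vanishing fraction to this $\tilde Z$-weighted average, and the barycenter of $\mathfrak{p}$ is $\bar\mu$. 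To show $\mathfrak{p}$ concentrates on equilibrium states, fix $\delta > 0$ and let $B_\delta = \{\press_\Sigma \leq \press_\Sigma^{\max} - \delta\}$, which is closed by upper semicontinuity of $\press_\Sigma$. Running the covering argument from part 2 on $B_{\delta/2} \cap \calU$, with representatives $\nu_i$ chosen so that $\press_\Sigma(\nu_i) \leq \press_\Sigma^{\max} - \delta/2 + O(\varepsilon)$ (using upper semicontinuity of $\press_\Sigma$), gives $\EE_{\sigma_n} \tilde Z_{\sigma_n, B_{\delta/2} \cap \calU} \leq e^{\abs{V_n}(\press_\Sigma^{\max} - \delta/4)}$, so $\tilde Q_{n, \calU}(B_{\delta/2}) \to 0$ exponentially. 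Applying the Portmanteau inequality to the \emph{open} set $U = \{\press_\Sigma < \press_\Sigma^{\max} - \delta/2\}$, which contains $B_\delta$ and is contained in $B_{\delta/2}$, yields $\mathfrak{p}(B_\delta) \leq \mathfrak{p}(U) \leq \liminf_n \tilde Q_{n, \calU}(U) = 0$; letting $\delta \to 0$ along a countable sequence gives $\mathfrak{p}\{\press_\Sigma < \press_\Sigma^{\max}\} = 0$.

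Part 3 is then a short consequence: given any $\Sigma$-equilibrium $\nu$ with ergodic decomposition $\theta_\nu$, the definition of good $\Sigma$ gives
\[ \press_\Sigma^{\max} = \press_\Sigma(\nu) \leq \int \press_\Sigma\, d\theta_\nu \leq \press_\Sigma^{\max}, \]
forcing $\theta_\nu$-a.e.\ $\nu'$ to also be equilibrium. By uniqueness of the ergodic decomposition, the decomposition of $\bar\mu = \int \nu\, d\mathfrak{p}(\nu)$ equals $\int \theta_\nu\, d\mathfrak{p}(\nu)$; since $\mathfrak{p}$-a.e.\ $\nu$ is equilibrium by part 1, the ergodic decomposition of $\bar\mu$ is supported on $\Sigma$-equilibrium measures.

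The main obstacle is reconciling the quenched nature of the $\lwlim$ hypothesis (which controls typical $\sigma_n$) with the annealed quantity $\EE_{\sigma_n} \tilde Z_{\sigma_n, \calU}$ used to define $\mathfrak{p}$: the superexponential decay built into $\lwlim$ is essential, since only then does it dominate the (merely exponential) typical size of $\tilde Z$ and prevent atypical $\sigma_n$ from distorting the annealed barycenter. Some care is also required with the direction of Portmanteau inequalities, since the relevant sets $B_\delta$ are closed level sets of an upper semicontinuous function while the natural exponential-decay bounds apply to slightly-open supersets.
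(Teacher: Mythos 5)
Your overall route is genuinely different from the paper's: instead of introducing the annealed entropy $\h_\Sigma^{\ann}$ and proving it is dominated by a mixture of $\h_\Sigma$-values (the paper's Lemma~\ref{lem:hmodproperties}, proved by a partition-plus-H\"older argument), you build $\mathfrak{p}$ directly as a weak* limit of annealed Gibbs measures $\tilde Q_{n,\calU}$ on empirical distributions and run a Gibbs-conditioning argument. Your part 2 is essentially a direct proof of the annealed variational principle, your part 3 is a clean alternative to the paper's Lemma~\ref{lem:sewgen}, and your identification of the superexponential decay in the $\lwlim$ definition as the thing that beats the exponential fluctuations of $\tilde Z$ in the barycenter computation is exactly right.

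However, the concentration step of part 1 has a genuine gap. The measures $\tilde Q_{n,\calU}$ charge only empirical distributions $P^{\sigma_n}_{\mb{x}}$, which are elements of $\Prob(\A^\Gamma)$ that are essentially never shift-invariant, and on non-invariant measures $\h_\Sigma=-\infty$, hence $\press_\Sigma=-\infty$. So your set $U=\{\press_\Sigma<\press_\Sigma^{\max}-\delta/2\}$, read in the ambient space where Portmanteau must be applied, contains essentially every empirical distribution and $\tilde Q_{n,\calU}(U)$ is close to $1$ rather than tending to $0$; likewise $\Omega(\sigma_n,B_{\delta/2}\cap\calU)$ is essentially all of $\Omega(\sigma_n,\calU)$, so the covering bound cannot give the claimed exponential decay. (If instead you read $U$ as a subset of $\Prob^\Gamma(\calX)$, it is not open in the ambient space and Portmanteau does not apply. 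Separately, sublevel sets of an upper semicontinuous function are not closed --- superlevel sets are --- though you do not really use that claim.) The fix is to fatten points rather than take level sets: for each invariant $\nu$ with $\press_\Sigma(\nu)\leq\press_\Sigma^{\max}-\delta$, choose an ambient-open $\calO_\nu\ni\nu$ with $\limsup_n\frac{1}{\abs{V_n}}\log\EE\abs{\Omega(\sigma_n,\calO_\nu)}\leq\h_\Sigma(\nu)+\delta/4$ and with $\tilde u$ varying by at most $\delta/4$ on $\calO_\nu$; then $\tilde Q_{n,\calU}(\calO_\nu)\to0$ exponentially, so $\mathfrak{p}(\calO_\nu)=0$, and a countable (Lindel\"of) subcover of $\{\nu\in\Prob^\Gamma(\calX)\st\press_\Sigma(\nu)<\press_\Sigma^{\max}\}$ by such neighborhoods finishes the argument. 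The same ambient-versus-invariant issue appears, harmlessly, in your part 2: the cells $P_i$ must be ambient-open sets whose union contains an entire neighborhood $\calU$ of $\Prob^\Gamma(\calX)$, not merely $\Prob^\Gamma(\calX)$ itself.
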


\noindent\textbf{Remarks.}
\begin{itemize}
	\item If the local limit $\bar\mu$ happens to be ergodic then the theorem implies that it is $\Sigma$-equilibrium, without having to assume $\Sigma$ is \good.

\item If $\bar\mu$ is \emph{not} ergodic, then it is not necessarily equilibrium: It will not be $\f$-equilibrium because of the ``error term'' in Seward's ergodic decomposition formula \cite{seward2016}. This can also happen with deterministic sofic approximations: if $\Sigma$ is an expander sequence and $\bar\mu$ is not ergodic then $\h_\Sigma(\bar\mu) = -\infty$ and in particular $\bar\mu$ is not equilibrium. 

\item If $\Sigma$ is deterministic, then the formula for the maximal pressure value reduces to Chung's variational principle \cite{chung2013}.
\end{itemize}

In the proof of the theorem we will use the following ``modified'' entropy and pressure: define
	\[ \h_{\Sigma}^{\ann}(\mu) = \inf_{\calO \ni \mu} \limsup_{n \to \infty} \frac{1}{\abs{V_n}} \log \EE_{\sigma_n} \exp \sup\left\{ \shent(\zeta) \st \zeta \in \bOmega(\sigma_n, \calO) \right\}. \]
If $\mu$ is ergodic, then any $\zeta \in \bOmega(\sigma_n, \calO)$ is mostly supported on good models for $\mu$, so the supremum over Shannon entropies is essentially $\log \abs*{\Omega(\sigma_n, \calO)}$, and $\h_\Sigma^{\ann}(\mu) = \h_\Sigma(\mu)$. For non-ergodic $\mu$, this modified quantity works better for our purposes below than $\h_\Sigma$. We similarly define the pressure as
	\[ \press_{\Sigma}^{\ann}(\mu) =  \lim_{ \calO \downarrow \mu} \limsup_{n \to \infty} \frac{1}{\abs{V_n}} \log \EE_{\sigma_n} \exp \sup \left\{ \widetilde{\Press}(\zeta) \st \zeta \in \bOmega(\sigma_n, \calO) \right\}. \]
Since we assume the specific energy $u$ is continuous, $\press_{\Sigma}^{\ann}(\mu) = \h_{\Sigma}^{\ann}(\mu) - u(\mu)$.

Similar notions of entropy have been considered by Alpeev \cite{alpeev2016}, Austin \cite[Section 6]{austin2016}, and Bowen \cite{bowen2011}.

We need the following properties of $\h^{\ann}$:
\begin{lemma}
\label{lem:hmodproperties}\ 
	\begin{enumerate}
		\item For every $\mu \in \Prob^\Gamma(\A^\Gamma)$, there is some $\mathfrak{p} \in \Prob(\Prob^\Gamma(\A^\Gamma))$ with barycenter $\mu$ such that
				\[ \h_\Sigma^{\ann}(\mu) \leq \int \h_\Sigma(\nu) \, \mathfrak{p}(d\nu) . \]
			If $\Sigma$ is \good{} then we can take $\mathfrak{p}$ to be the ergodic decomposition of $\mu$.
		\item For any $\mu$, $\h_\Sigma^{\ann}(\mu) \geq \h_\Sigma(\mu)$.
		\item If $\lwlim(u,\Sigma) = \bar\mu$, then $\bar\mu$ maximizes $\press_{\Sigma}^{\ann}$ on $\Prob^\Gamma(\calX)$, and
			\[ \press_{\Sigma}^{\ann}(\bar\mu) = \lim_{\calU \downarrow \Prob^\Gamma(\calX)} \limsup_{n \to \infty} \frac{1}{\abs{V_n}} \log \EE_{\sigma_n} \exp\{ \widetilde{\Press}(\xi_n^\calU)\} . \]
	\end{enumerate}
\end{lemma}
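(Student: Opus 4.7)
Part 2 is immediate: the uniform measure on $\Omega(\sigma_n, \calO)$ lies in $\bOmega(\sigma_n, \calO)$ and has Shannon entropy $\log|\Omega(\sigma_n, \calO)|$, so
\[ \EE_{\sigma_n}\! \exp \sup_{\zeta \in \bOmega(\sigma_n, \calO)} \shent(\zeta) \geq \EE_{\sigma_n} |\Omega(\sigma_n, \calO)|, \]
and $\h_\Sigma^{\ann}(\mu) \geq \h_\Sigma(\mu)$ follows on passing to $\inf_\calO \limsup_n \tfrac{1}{|V_n|}\log$.

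For Part 1, the plan is to extract $\mathfrak{p}$ as a weak-$*$ limit of the empirical distributions of near-optimizers of the sup defining $\h_\Sigma^{\ann}(\mu)$. Fix a fine finite cover $\{\calO_j\}_{j \leq k}$ of $\Prob(\A^\Gamma)$ by open sets with representatives $\nu_j \in \calO_j$; after passing to a disjoint refinement, partition $\A^{V_n} = \bigsqcup_j A_j$ with $A_j \subset \Omega(\sigma_n, \calO_j)$. Decompose $\zeta = \sum_j p_j \zeta_j$ with $\zeta_j = \zeta|_{A_j}/p_j$. Concavity of Shannon entropy and $\shent(\zeta_j) \leq \log|A_j|$ give
\[ \shent(\zeta) \leq \sum_j p_j \log|\Omega(\sigma_n, \calO_j)| + \log k, \]
subject to the constraint $\sum_j p_j \nu_j \approx \mu$. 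The key step is handling the random dependence of $(p_j)$ on $\sigma_n$: discretize onto a $\tfrac{1}{M}$-grid (yielding $\leq M^k$ types), and for each fixed discretized $\tilde p$ apply Jensen's inequality for the concave geometric mean,
\[ \EE_{\sigma_n} \prod_j |\Omega(\sigma_n, \calO_j)|^{\tilde p_j} \leq \prod_j \big(\EE_{\sigma_n} |\Omega(\sigma_n, \calO_j)|\big)^{\tilde p_j} . \]
Taking $\limsup_n$, then $M \to \infty$, then refining the cover, the maximizing weights converge along a subsequence (by weak-$*$ compactness of $\Prob(\Prob(\A^\Gamma))$) to a measure $\mathfrak{p}$ with barycenter $\mu$ satisfying $\h_\Sigma^{\ann}(\mu) \leq \int \h_\Sigma(\nu)\,d\mathfrak{p}(\nu)$. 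In the good case, apply goodness to each $\nu \in \supp \mathfrak{p}$ and note that $\int \theta_\nu\,d\mathfrak{p}(\nu)$ is supported on ergodic measures with barycenter $\mu$, hence coincides with the ergodic decomposition of $\mu$ by uniqueness.

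For Part 3, the lower bound uses the superexponential concentration built into $\lwlim(u, \Sigma) = \bar\mu$: on the event $\xi_n^\calU \in \bOmega(\sigma_n, \calO)$ (probability $1 - o(e^{-cn})$ for every $c$), the sup defining $\press_\Sigma^{\ann}(\bar\mu)$ dominates $\widetilde{\Press}(\xi_n^\calU)$, and the contribution from the complementary event is subexponentially smaller since $\widetilde{\Press}$ is uniformly bounded by $|V_n|(\log|\A|+u^{\max}-u^{\min})$. For the upper bound, the Gibbs variational identity $\widetilde{\Press}(\zeta) = \log \tilde Z_{\sigma_n} - D(\zeta \,\|\, \tilde\xi_{\sigma_n})$ gives $\sup_{\bOmega(\sigma_n,\calO)} \widetilde{\Press}(\zeta) \leq \log \tilde Z_{\sigma_n}$, and superexponential concentration of the unrestricted Gibbs $\tilde\xi_{\sigma_n}$ on $\Omega(\sigma_n, \calU)$ (extracted from $\lwlim = \bar\mu$ with $\bar\mu$ interior to $\calU$) yields $\log \tilde Z_{\sigma_n} - \log \tilde Z_{\sigma_n, \calU} = -\log \tilde\xi_{\sigma_n}(\Omega(\sigma_n, \calU)) = o(|V_n|)$ in the iterated limit. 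The maximality of $\bar\mu$ then follows because the same upper bound holds uniformly in $\mu$. I expect the principal technical obstacle to be transferring the superexponential concentration hypothesis stated for $\xi_n^\calU$ to the unrestricted $\tilde\xi_{\sigma_n}$, which requires carefully coordinating the order of the limits in $\calO$ (neighborhoods of $\bar\mu$) and $\calU$ (neighborhoods of $\Prob^\Gamma(\calX)$).
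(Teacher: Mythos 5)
Your Parts 1 and 2 follow essentially the same route as the paper. For Part 2, note one small point you elide: $\Unif(\Omega(\sigma_n,\calO))$ has \emph{average} empirical distribution equal to the average of the $P^{\sigma_n}_{\mb{x}}$ over $\mb{x}\in\Omega(\sigma_n,\calO)$, which lies in $\calO$ only if $\calO$ is convex; the paper handles this by restricting the infimum to convex neighborhoods, which is legitimate since $\Prob(\A^\Gamma)$ is locally convex. For Part 1, your partition-into-cells decomposition, the chain rule bound $\shent(\zeta)\leq\sum_j p_j\log\abs{\Omega(\sigma_n,\calO_j)}+\log k$, the discretization of the random weight vector into finitely many types, and the weighted-geometric-mean form of Jensen (which is exactly the generalized H\"older inequality the paper uses, with your grid discretization playing the role of the paper's $\delta$-ball cover of the weight simplex) reproduce the paper's argument, as does the extraction of $\mathfrak{p}$ as a weak limit and the reduction of the \good{} case to uniqueness of the ergodic decomposition.

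Part 3 is where there is a genuine gap. Your lower bound (split $\EE\exp\widetilde{\Press}(\xi_n^\calU)$ on the event $\xi_n^\calU\in\bOmega(\sigma_n,\calO)$ and its complement, using the uniform bound $\widetilde{\Press}\leq\abs{V_n}(\log\abs{\A}-\tilde u^{\min})$ against the superexponentially small probability) is exactly the paper's computation. But your upper bound routes through the \emph{unrestricted} Gibbs measure $\tilde\xi_{\sigma_n}$ and the identity $\widetilde{\Press}(\zeta)=\log\tilde Z_{\sigma_n}-D(\zeta\,\|\,\tilde\xi_{\sigma_n})$, and then requires $\log\tilde Z_{\sigma_n}-\log\tilde Z_{\sigma_n,\calU}=-\log\tilde\xi_{\sigma_n}(\Omega(\sigma_n,\calU))=o(\abs{V_n})$. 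This cannot be ``extracted from $\lwlim(u,\Sigma)=\bar\mu$'': that hypothesis is phrased entirely in terms of the \emph{restricted} states $\xi_n^\calU$ and says nothing about where the unrestricted $\tilde\xi_{\sigma_n}$ puts its mass. Indeed the claim is false in general, since $\tilde\xi_{\sigma_n}$ depends on the arbitrary continuous extension $\tilde u$ of $u$ off the subshift $\calX$; choosing $\tilde u$ very negative away from $\calX$ makes $\tilde\xi_{\sigma_n}$ concentrate on microstates with empirical distribution far from $\Prob^\Gamma(\calX)$, so that $\log\tilde Z_{\sigma_n}-\log\tilde Z_{\sigma_n,\calU}$ is of order $\abs{V_n}$. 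The fix is to never introduce $\tilde Z_{\sigma_n}$: use instead the restricted variational principle, namely that $\xi_n^\calU$ maximizes $\widetilde{\Press}$ among measures supported on $\Omega(\sigma_n,\calU)$ (equivalently $\widetilde{\Press}(\zeta)=\log\tilde Z_{\sigma_n,\calU}-D(\zeta\,\|\,\xi_n^\calU)$ for such $\zeta$), so that for any $\nu\in\Prob^\Gamma(\calX)$ the quantity $\sup\{\widetilde{\Press}(\zeta)\st\zeta\in\bOmega(\sigma_n,\calV)\}$ is bounded by $\widetilde{\Press}(\xi_n^\calU)$ directly, giving $\press^{\ann}_\Sigma(\nu)\leq\limsup_n\frac{1}{\abs{V_n}}\log\EE_{\sigma_n}\exp\widetilde{\Press}(\xi_n^\calU)\leq\press^{\ann}_\Sigma(\bar\mu)$, which yields both the maximality of $\bar\mu$ and the displayed formula at once.
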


First we show that the lemma implies the theorem. The proof of the lemma is in Section \ref{sec:lemmaproof}.

\begin{proof}[Proof of Theorem \ref{thm:eqgeneral}]
	Part 1 of the lemma implies that
		\[ \press_{\Sigma}^{\ann}(\bar\mu) \leq \int \press_{\Sigma}(\nu) \, \mathfrak{p}(d\nu) , \]
	where we can take $\mathfrak{p}$ to be the ergodic decomposition of $\bar\mu$ if $\Sigma$ is good. Note that $\mathfrak{p} \in \Prob(\Prob^\Gamma(\A^\Gamma))$ must give full measure to $\Prob^\Gamma(\calX)$ in order to have barycenter $\mu \in \Prob^\Gamma(\calX)$: let $\phi \colon \A^\Gamma \to \RR$ denote the distance to $\calX$. Since $\calX$ is closed, $\{ \phi = 0\} = \calX$. Then if $\mathfrak{p}$ has barycenter $\mu$
		\[ \int \nu(\phi) \, \mathfrak{p}(d\nu) = \mu(\phi) = 0 \]
	so for $\mathfrak{p}$-a.e. $\nu$ we have $\nu(\phi) = 0$, which means $\nu$-a.e. $\mathbf{x}$ is in $\calX$.
	
	So since, by parts 2 and 3 of the lemma, for any $\nu \in \Prob^\Gamma(\calX)$ we have $\press_\Sigma(\nu) \leq \press_\Sigma^{\ann}(\nu) \leq \press_\Sigma^{\ann}(\bar\mu)$, in fact $\press_{\Sigma}(\nu) = \press^{\ann}_{\Sigma}(\bar\mu)$ for $\mathfrak{p}$-a.e.~$\nu$.
	
	On the other hand, suppose $\nu$ is a measure which is not $\Sigma$-equilibrium. Then we can pick some $\mu$ with $\press_{\Sigma}(\mu) > \press_{\Sigma}(\nu)$, and using parts 1 and 2 of the Lemma we get the chain of inequalities
		\[ \press_{\Sigma}^{\ann}(\bar\mu) \geq \press_{\Sigma}^{\ann} (\mu) \overset{\text{part 2}}{\geq} \press_{\Sigma}(\mu) > \press_{\Sigma}(\nu), \]
	so in particular $\press_{\Sigma}^{\ann}(\bar\mu) \ne \press_{\Sigma}(\nu)$. But this is not the case for $\mathfrak{p}$-a.e. $\nu$, so $\mathfrak{p}$-a.e. $\nu$ is $\Sigma$-equilibrium. This proves parts 1 and 3 of the theorem.
	
	This fact, along with the above conclusion that $\press_{\Sigma}(\nu) = \press^{\ann}_{\Sigma}(\bar\mu)$ for $\mathfrak{p}$-a.e.~$\nu$, implies that $\press^{\ann}_{\Sigma}(\bar\mu)$ is the value of $\press_{\Sigma}(\nu)$ for $\Sigma$-equilibrium $\nu$. So the formula for $\press^{\ann}_{\Sigma}(\bar\mu)$ in item 3 of the Lemma is a formula for the maximal value of $\press_\Sigma(\cdot)$: this proves part 2 of Theorem~\ref{thm:eqgeneral}.
\end{proof}

The lemma also implies the following:
\begin{prop}
	If $\Sigma$ is deterministic, then $\h_\Sigma^{\ann}(\cdot)$ is the upper concave envelope of $\h_\Sigma(\cdot)$.
\end{prop}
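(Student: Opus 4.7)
The plan is to prove the two inequalities $\h_\Sigma^{\ann} \geq \overline{\h}_\Sigma$ and $\h_\Sigma^{\ann} \leq \overline{\h}_\Sigma$, where $\overline{\h}_\Sigma$ denotes the upper concave envelope of $\h_\Sigma$. Since $\h_\Sigma$ is upper semicontinuous (its definition as an infimum, over open neighborhoods of $\mu$, of quantities depending only on the neighborhood makes the sublevel sets $\{\h_\Sigma < c\}$ open), the envelope $\overline{\h}_\Sigma$ is also upper semicontinuous and, on the compact convex set $\Prob^\Gamma(\A^\Gamma)$, admits the representation
\[ \overline{\h}_\Sigma(\mu) = \sup\left\{ \sum_{i=1}^k p_i \h_\Sigma(\mu_i) \st k \in \NN,\ p_i \geq 0,\ \sum_i p_i = 1,\ \sum_i p_i \mu_i = \mu \right\}. \]

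For the lower bound I first check that $\h_\Sigma^{\ann}$ itself is concave. Given $\mu = p\mu_1 + (1-p)\mu_2$ and a target neighborhood $\calO \ni \mu$, choose neighborhoods $\calO_j \ni \mu_j$ small enough that $p\calO_1 + (1-p)\calO_2 \subset \calO$. If $\zeta^j_n \in \bOmega(\sigma_n, \calO_j)$ nearly attain the Shannon-entropy suprema appearing in the definition of $\h_\Sigma^{\ann}(\mu_j)$, let $\zeta_n = p\zeta^1_n + (1-p)\zeta^2_n$; then $P_{\zeta_n}^{\sigma_n} \in \calO$, so $\zeta_n \in \bOmega(\sigma_n, \calO)$, and concavity of Shannon entropy gives $\shent(\zeta_n) \geq p\shent(\zeta^1_n) + (1-p)\shent(\zeta^2_n)$. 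Passing to the $\limsup$ and then the infimum over $\calO$ yields $\h_\Sigma^{\ann}(\mu) \geq p\h_\Sigma^{\ann}(\mu_1) + (1-p)\h_\Sigma^{\ann}(\mu_2)$. Combined with $\h_\Sigma^{\ann} \geq \h_\Sigma$ (Lemma~\ref{lem:hmodproperties}(2)), this makes $\h_\Sigma^{\ann}$ a concave function dominating $\h_\Sigma$, hence $\h_\Sigma^{\ann} \geq \overline{\h}_\Sigma$.

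For the upper bound I use a partition argument. Fix $\calO \ni \mu$ and partition some neighborhood of $\Prob^\Gamma(\A^\Gamma)$ into finitely many Borel pieces $\calU_1, \ldots, \calU_N$ of diameter at most $\eta$. Given $\zeta \in \bOmega(\sigma_n, \calO)$, set $A_i = \Omega(\sigma_n, \calU_i)$ and $q_i = \zeta(A_i)$. The standard conditional-entropy decomposition gives
\[ \shent(\zeta) \leq \log N + \sum_i q_i \log|A_i|. \]
A routine compactness cover (cover $\overline{\calU_i}$ by small neighborhoods in which $\h_\Sigma$ is controlled, then union-bound $|A_i|$) shows $\limsup_n \tfrac{1}{|V_n|}\log|A_i| \leq \sup_{\nu \in \overline{\calU_i}} \h_\Sigma(\nu)$; upper semicontinuity of $\h_\Sigma$ on the compact set $\overline{\calU_i}$ means this supremum is attained at some $\mu_i \in \overline{\calU_i}$. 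The conditional average empirical distribution $\bar{P}_i = P^{\sigma_n}_{\zeta(\cdot\mid A_i)}$ lies in $\overline{\conv(\calU_i)}$, hence within $O(\eta)$ of $\mu_i$, so $\sum_i q_i \mu_i$ is within $O(\eta)$ of $P_\zeta^{\sigma_n} \in \calO$. Therefore $\sum_i q_i \h_\Sigma(\mu_i) \leq \overline{\h}_\Sigma(\sum_i q_i \mu_i)$, and after extracting a subsequence along which the $q_i$'s and the barycenter converge, upper semicontinuity of $\overline{\h}_\Sigma$ together with the limits $\eta \to 0$ and $\calO \downarrow \{\mu\}$ converts this into $\h_\Sigma^{\ann}(\mu) \leq \overline{\h}_\Sigma(\mu)$.

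The main obstacle is the bookkeeping in this last step: one must simultaneously drive $n \to \infty$ (with $\log N / \abs{V_n} \to 0$ automatic since $N$ depends only on the partition), refine the partition ($\eta \to 0$), and shrink $\calO$, while extracting a convergent sequence of finite barycenters $\sum_i q_i \mu_i$ tending to $\mu$. Upper semicontinuity of $\overline{\h}_\Sigma$, inherited from $\h_\Sigma$, is exactly what is needed to propagate the estimate $\sum_i q_i \h_\Sigma(\mu_i) \leq \overline{\h}_\Sigma(\sum_i q_i \mu_i)$ across this triple limit to the value $\overline{\h}_\Sigma(\mu)$.
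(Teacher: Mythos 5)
Your two-inequality strategy is, at bottom, the same as the paper's: the lower bound comes from concavity of Shannon entropy (the paper asserts concavity of $\h_\Sigma^{\ann}$ for deterministic $\Sigma$ in exactly the same one-line way), and your upper bound is essentially an inline re-derivation, in the deterministic case, of part 1 of Lemma~\ref{lem:hmodproperties} (partition $\Prob^\Gamma(\A^\Gamma)$ into small-diameter pieces, apply the chain rule $\shent(\zeta)\le \log N+\sum_i q_i\log|A_i|$, control each $|A_i|$ by a compactness cover, and track the barycenters $\sum_i q_i\mu_i\to\mu$). The paper instead just cites that lemma and then pairs the resulting decomposition $\h_\Sigma^{\ann}(\mu)\le\int\h_\Sigma\,d\mathfrak{p}$ against an arbitrary continuous affine majorant $h\ge\h_\Sigma$, which immediately gives $\h_\Sigma^{\ann}(\mu)\le h(\mu)$ and hence the envelope bound. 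You could shorten your argument considerably by doing the same; your version does have the minor advantage of avoiding the H\"older step of the lemma's proof, which is only needed for random $\Sigma$.

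One genuine soft spot: the representation $\overline{\h}_\Sigma(\mu)=\sup\{\sum_i p_i\h_\Sigma(\mu_i)\st\sum_i p_i\mu_i=\mu\}$ over \emph{finite} convex combinations is false in general for upper semicontinuous functions on an infinite-dimensional compact convex set (the correct statement replaces finite combinations by integrals against measures with barycenter $\mu$; for finitely supported measures the supremum can be strictly smaller). Your lower bound leans on this representation: ``$\h_\Sigma^{\ann}$ is concave and dominates $\h_\Sigma$, hence dominates $\overline{\h}_\Sigma$'' only yields domination of the finite-combination concavification, not of the envelope defined as the infimum of continuous affine majorants. The fix is the one the paper uses: observe in addition that $\h_\Sigma^{\ann}$ is upper semicontinuous (being an infimum over neighborhoods of $\mu$ of quantities depending only on the neighborhood), and that any u.s.c.\ concave function dominating $\h_\Sigma$ dominates its closed concave envelope. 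Your upper bound is unaffected, since there you only use the trivial direction (the envelope dominates finite convex combinations) together with its upper semicontinuity.
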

\begin{proof}
	Recall that the upper concave envelope of a function $f$ is the pointwise infimum of the set of continuous affine functions bounded below by $f$ \cite[Section 3]{phelps2001}. Equivalently, it is the unique upper semicontinuous, concave function bounded below by $f$ and bounded above by every continuous affine function bounded below by $f$.
	
	Suppose $h \colon \Prob^\Gamma(\calX) \to \RR$ is a continuous affine function with $h \geq \h_\Sigma$. Then by part 1 of the lemma
	\begin{align*}
		\h_\Sigma^{\ann}(\mu)
			&\leq \int \h_\Sigma(\nu) \, \mathfrak{p}(d\nu) \\
			&\leq \int h(\nu) \, \mathfrak{p}(d\nu) \\
			&= h(\mu).
	\end{align*}
	So $\h_\Sigma^{\ann}$ is bounded above by the upper envelope of $\h_\Sigma$. This is true even if $\Sigma$ is random.
	
	Note also that $\h_\Sigma^{\ann}$ is upper semicontinous and that by part 2 of the lemma $\h_\Sigma^{\ann} \geq \h_\Sigma$. To finish the proof we note that $\h_\Sigma^{\ann}$ is concave if $\Sigma$ is deterministic, which follows from concavity of Shannon entropy.
\end{proof}


\subsection{Proof of Theorem \ref{thm:isingeq}}
\label{sec:eqconsequence}
Whether or not $\Sigma$ is good, in the case of the Ising model on a free group we can then take advantage of symmetry to prove Theorem \ref{thm:isingeq}:
\begin{theorem}
	Let $\Gamma$ be a free group and $\Sigma$ be any sofic approximation to $\Gamma$. For an Ising model on $\Gamma$ at any positive temperature and zero external field, both $\mu^+$ and $\mu^-$ are $\Sigma$-equilibrium.
\end{theorem}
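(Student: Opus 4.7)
The plan splits by temperature. If $J \leq J_{uniq}(r)$, then $\mu^+ = \mu^- = \mu^{\FB}$ is the unique Gibbs state; since $\press_\Sigma$ is upper semicontinuous on the compact space $\Prob^\Gamma(\calX)$ it attains its maximum, and by the Barbieri--Meyerovitch theorem (equilibrium implies Gibbs) any maximizer must be Gibbs, hence equals $\mu^{\FB}$. So the claim is immediate in this regime.

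Assume henceforth $J > J_{uniq}(r)$, so $\mu^+ \neq \mu^-$. The first step is to invoke the Dembo--Montanari local weak convergence theorem for Ising on locally tree-like graphs \cite{montanari2012}: for any deterministic sofic approximation $\Sigma$ to $\FF_r$, the finitary Ising Gibbs states converge locally weakly to $\bar{\mu} \coloneqq \tfrac{1}{2}(\mu^+ + \mu^-)$. Combined with Lemma~\ref{lem:uniquelimits}, this extends to $\lwlim(u, \Sigma) = \bar{\mu}$ for every sofic approximation, random or deterministic. Theorem~\ref{thm:eqgeneral}(2) then identifies the maximum value of $\press_\Sigma$ as
\[ M \coloneqq \press_\Sigma^{\ann}(\bar{\mu}) = \lim_{\calU \downarrow \Prob^\Gamma(\calX)} \limsup_{n \to \infty} \frac{1}{\abs{V_n}} \log \EE_{\sigma_n} \exp\bigl(\widetilde{\Press}(\xi_n^\calU)\bigr). \]

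The crux is then a symmetry-breaking comparison showing $\press_\Sigma^{\ann}(\mu^+) \geq M$. Zero external field makes the finitary Gibbs measure (and its restriction $\xi_n^\calU$, taking $\calU$ swap-invariant) invariant under $\mb{x} \mapsto -\mb{x}$. Decomposing by sign of the majority vote -- the tied set carries sub-exponential mass and contributes a vanishing correction -- one obtains $\xi_n^\calU = \tfrac{1}{2}\xi_n^{\calU,+} + \tfrac{1}{2}\xi_n^{\calU,-}$ with disjointly-supported, swap-image pieces. Since the Shannon entropy of a mixture of disjointly-supported equally-weighted measures gains an additive $\log 2$ while the energy is affine and swap-invariant, a direct computation gives (up to a $o(\abs{V_n})$ correction)
\[ \widetilde{\Press}(\xi_n^\calU) = \widetilde{\Press}(\xi_n^{\calU,+}) + \log 2. \]
The symmetry-breaking content of \cite{montanari2012} further says that the average empirical distribution of $\xi_n^{\calU,+}$ converges to $\mu^+$, so $\xi_n^{\calU,+} \in \bOmega(\sigma_n, \calO^+)$ for any neighborhood $\calO^+ \ni \mu^+$ once $\calU$ is sufficiently small and $n$ is large. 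Plugging $\xi_n^{\calU,+}$ as a competitor in the sup defining $\press_\Sigma^{\ann}(\mu^+)$, and noting that $\log 2$ is killed by the $\tfrac{1}{\abs{V_n}}$ normalization, yields $\press_\Sigma^{\ann}(\mu^+) \geq M$. Since $\mu^+$ is ergodic, the remark before Lemma~\ref{lem:hmodproperties} gives $\press_\Sigma^{\ann}(\mu^+) = \press_\Sigma(\mu^+)$, which combined with the automatic bound $\press_\Sigma(\mu^+) \leq M$ forces equality. The argument for $\mu^-$ is identical, and this approach sidesteps any question of whether $\Sigma$ is good.

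The main obstacle I expect is invoking \cite{montanari2012} in exactly the form needed for arbitrary sofic approximations: not only the local weak convergence of the full Gibbs measure to $\bar{\mu}$, but the sharper ``symmetry-breaking'' statement that the majority-positive conditional converges to $\mu^+$. Lemma~\ref{lem:uniquelimits} handles the random-to-deterministic reduction cleanly, but this sharper convergence is the substantive analytic input and must be extracted from the Dembo--Montanari arguments rather than from the machinery developed in Theorem~\ref{thm:eqgeneral}. Handling the tied configurations (for even-degree graphs) and the other sub-exponential bookkeeping is routine given the concentration.
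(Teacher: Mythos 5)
Your proposal reaches the right conclusion but by a genuinely different route from the paper, and the difference matters. The paper also starts from $\lwlim(u,\Sigma)=\bar\mu=\tfrac12(\mu^++\mu^-)$ via \cite{montanari2012} and Theorem~\ref{thm:eqgeneral}, but it only uses part 1 of that theorem: there is some $\mathfrak{p}$ with barycenter $\bar\mu$ giving full mass to equilibrium states, and any such $\mathfrak{p}$ is necessarily supported on convex combinations of $\mu^+$ and $\mu^-$. It then transfers equilibrium status from an \emph{unknown} convex combination $\nu$ to $\mu^+$ by the map $\Phi$ that fixes configurations in a shift-invariant set $E^+$ of full $\mu^+$-measure and flips sign elsewhere: $\Phi_*\nu=\mu^+$ for every such $\nu$, $\Phi$ is $2$-to-$1$ so does not decrease sofic entropy \cite[Proposition 5.7]{bowen2020}, and the spin-flip symmetry preserves energy, so $\press_\Sigma(\mu^+)\geq\press_\Sigma(\nu)$. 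This requires nothing from \cite{montanari2012} beyond convergence of the full, unconditioned finitary Gibbs states.

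Your route instead conditions $\xi_n^{\calU}$ on the sign of the majority and needs the sharper statement that the conditioned measures $\xi_n^{\calU,+}$ converge locally to $\mu^+$ (uniformly enough over the sofic approximation, and with superexponential failure probability in the random case). You correctly flag this as the substantive input, but it is more than a technical extraction issue: it is exactly the dependency the paper's $\Phi$-trick is designed to eliminate, and as written your argument is incomplete without it. Two smaller points: the claim that the tied/near-tied set carries negligible Gibbs mass is itself part of that same symmetry-breaking input, not a separate routine estimate; and your final step uses $\press_\Sigma^{\ann}(\mu^+)=\press_\Sigma(\mu^+)$ for ergodic $\mu^+$, which the paper only asserts in a remark --- the paper's proof avoids relying on it by working with $\press_\Sigma$ of the mixture directly. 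Your handling of the uniqueness regime via Barbieri--Meyerovitch is fine, though it is simpler to note that there $\bar\mu=\mu^{\FB}$ is ergodic and apply the first remark after Theorem~\ref{thm:eqgeneral}.
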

\begin{proof}
	Since $\mu^+$, $\mu^-$ are distinct ergodic measures, they are mutually singular. Let $E^+$ be a shift-invariant set with $\mu^+(E^+) = 1$ and $\mu^-(E^+)=0$. Define $\Phi \colon \{\pm 1\}^\Gamma \to \{\pm 1\}^\Gamma$ by
		\[ \Phi(\mb{x}) = \left\{ \begin{array}{ll}
				\mb{x}, & \mb{x} \in E^+ \\
				-\mb{x}, & \mb{x} \not\in E^+
				\end{array} \right. \]
	Then $\Phi_*\mu^+ = \mu^+$ since $\Phi(\mb{x}) = \mb{x}$ $\mu^+$-a.s., and $\Phi_*\mu^- = \mu^+$ since $\Phi(\mb{x}) = -\mb{x}$ $\mu^-$-a.s. and each of $\mu^+$ and $\mu^-$ is the image of the other under flipping signs. Therefore if $\mu$ is any convex combination of $\mu^+,\mu^-$ we have $\Phi_*\mu = \mu^+$. Since $\Phi$ is finite-to-1 it does not decrease sofic entropy \cite[Proposition 5.7]{bowen2020}, and by symmetry it does not change the energy. Therefore $\Phi$ does not decrease pressure, and it suffices to find some $\Sigma$-equilibrium $\mu$ which is a convex combination of $\mu^+, \mu^-$.
	
	By \cite{montanari2012}, $\lwlim(u,\Sigma) = \bar\mu = \frac{1}{2}(\mu^+ + \mu^-)$. Since there is some $\mathfrak{p}$ with barycenter $\bar\mu$ such that $\mathfrak{p}$-a.e.~$\nu$ is equilibrium (by Theorem \ref{thm:eqgeneral}), this finishes the proof (if $\mathfrak{p}$ has barycenter $\bar\mu$, $\mathfrak{p}$-a.e.~$\nu$ must be a convex combination of $\mu^+, \mu^-$, the ergodic components of $\bar\mu$).
\end{proof}

It was also claimed in Theorem~\ref{thm:isingeq} that, with zero external field, $\mu^+$ and $\mu^-$ are the only $\f$-equilibrium states. This follows from the following lemma, Proposition~\ref{prop:fnoneq}, and the fact that $\mu^+$, $\mu^-$, and $\mu^{\FB}$ are the only Ising Gibbs states which are completely homogeneous Markov chains \cite[Section 12.2]{georgii2011}. A tree-indexed Markov chain is called completely homogeneous if the transition kernel is the same across every edge.

The lemma is stated in slightly further generality than is required for the Ising model. This requires making one more definition: we say that a subshift $\calX \subset \A^{\FF_r}$ is a \emph{topological Markov chain} if there is some $\{0,1\}$-valued $\abs{\A} \times \abs{\A}$ matrix $M$ such that $\calX$ is the set of all $\mb{x} \in \A^{\FF_r}$ such that if $\gamma \in \FF_r$ and $i \in [r]$ then $M_{\mb{x}(\gamma),\mb{x}(s_i \gamma)} = 1$. In other words, there is some set of pairs of symbols in $\A$ (encoded by $M$) which are allowed to appear on either end of a directed edge, and $\calX$ is the set of \emph{all} microstates for which only allowed pairs appear.

Some authors refer to this as a ``subshift of finite type.''

\begin{lemma}
	Let $\calX$ be a subshift of some $\A^{\FF_r}$ which is a topological Markov chain. Suppose that $\mu \in \Prob^{\FF_r}(\calX)$ is $\f$-equilibrium for some ``nearest-neighbor'' energy function $u \colon \calX \to \RR$, i.e. one of the form
		\[ u(\mb{x}) = B(\mb{x}(e)) + \sum_{i = 1}^r J(\mb{x}(e), \mb{x}(s_i)) \]
	for functions $B \colon \A \to \RR$ and $J \colon \A^2 \to \RR$.
	
	Then $\mu$ is a completely homogeneous Markov chain.
\end{lemma}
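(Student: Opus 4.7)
The plan is to reduce the problem to pair marginals, exploit the rigidity of tree-indexed Markov chains, and finish with a concavity comparison.

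For any shift-invariant $\nu$, write $\alpha^\nu := (\pi_e)_*\nu$ and $\rho_i^\nu := (\pi_e, \pi_{s_i})_*\nu$. The nearest-neighbor form of $u$ makes $u(\nu)$ depend on $\nu$ only through these marginals, and the same is true of the explicit Markov formula
\[ F(\nu) := (1-r) \shent_\nu(\pi_e) + \sum_{i=1}^r \shent_\nu(\pi_e \mid \pi_{s_i}). \]
The critical analytic input I would use is the bound $\f(\nu) \le F(\nu)$, valid for every shift-invariant $\nu$ with equality precisely when $\nu$ is a shift-invariant Markov random field on the Cayley tree. This is a known consequence of the microstate-counting construction of $\f$ in \cite{bowen2010, bowen2010c}: the annealed product count governing first moments of good-model counts is exactly $\exp(\abs{V_n} F(\nu))$, and this is saturated only in the Markov case. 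Establishing this cleanly is where the real technical work concentrates.

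Granting this, let $\bar\mu$ be the shift-invariant Markov random field on the Cayley tree with transitions $P_i(a,b) := \rho_i^\mu(a,b)/\alpha^\mu(a)$. Existence and uniqueness come for free because the Cayley graph is a tree, so no cycle-compatibility is required. The topological Markov chain hypothesis forces each $\rho_i^\mu$ to be supported on allowed pairs (since $\mu$ lives on $\calX$), hence $P_i$ uses only allowed transitions and $\bar\mu(\calX) = 1$. Because $\bar\mu$ shares every pair marginal with $\mu$, we have $u(\bar\mu) = u(\mu)$ and $F(\bar\mu) = F(\mu)$, yielding
\[ \press_\f(\bar\mu) = F(\bar\mu) - u(\bar\mu) = F(\mu) - u(\mu) \ge \f(\mu) - u(\mu) = \press_\f(\mu). \]
$\f$-equilibrium of $\mu$ forces equality; then $\f(\mu) = F(\mu)$, and the equality condition identifies $\mu$ as a Markov chain. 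Since pair marginals determine the chain, $\mu = \bar\mu$.

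To upgrade to \emph{complete} homogeneity, I would use a symmetrization argument: $u(\nu)$ sees $(\rho_i^\nu)$ only through $\sum_i \rho_i^\nu$, while $F(\nu)$ contains $\sum_i \shent(\rho_i^\nu)$. Replacing every $\rho_i^\mu$ by the common average $\bar\rho := \frac{1}{r}\sum_j \rho_j^\mu$ preserves $u$ and, by strict concavity of Shannon entropy, strictly increases $F$ unless all $\rho_i^\mu$ already agree (and the resulting chain is still supported on $\calX$ since averaging only redistributes weight among already-allowed pairs). Equilibrium of $\mu$ thus forces $\rho_1^\mu = \cdots = \rho_r^\mu$, giving a single transition kernel across every generator. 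The main obstacle is the precise invocation of $\f \le F$ with its equality condition; with that in hand the rest is a routine variational comparison.
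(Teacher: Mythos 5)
Your proposal is correct and follows essentially the same route as the paper: your $\bar\mu$ is exactly the Markov approximation $\mu^1$ used there, and your key input $\f(\nu)\le F(\nu)$ with equality iff $\nu$ is Markov is precisely \cite[Theorem 11.1]{bowen2010c} (so it is a citation, not new technical work), combined with the same observations that the topological-Markov-chain hypothesis keeps $\bar\mu$ supported on $\calX$ and that $u$ sees only pair marginals. The only divergence is the homogeneity step: the paper replaces all kernels by the one maximizing $\shent(X_e,X_i)-\EE J(X_e,X_i)$, which strictly speaking only forces those $r$ scalar terms to coincide, whereas your averaging-plus-strict-concavity argument genuinely forces the pair marginals $\rho_i^\mu$ themselves to coincide — a slightly cleaner way to close that step.
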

\begin{proof}
	Let $\mu^1$ be the Markov approximation to $\mu$. Note that since $\calX$ is a topological Markov chain, $\mu^1$ is still supported on $\calX$. By \cite[Theorem 11.1]{bowen2010c}, $\f(\mu^1) \geq \f(\mu)$, with equality if and only if $\mu = \mu^1$, i.e. $\mu$ is a Markov chain. Since $u(\mu)$ only depends on the single-site and single-edge marginals, $u(\mu) = u(\mu^1)$. So $\press_{\f}(\mu^1) \geq \press_{\f}(\mu)$, with equality if and only if $\mu$ is a Markov chain. In particular, if $\mu$ is $\f$-equilibrium then it is a Markov chain.
	
	We have assumed that $\mu$ is shift-invariant, but we need to explain why it is completely homogeneous: a priori there may be a different Markov transition kernel for each generator $s_i$ (i.e. each edge type). To see that this is not possible for an equilibrium measure, note that (since $\mu$ is Markov) if we let $X_e, X_1, \ldots X_r$ denote random spins at the sites $e, s_1, \ldots s_r \in \FF_r$ distributed according to $\mu$ then
	\begin{align*}
		\press_{\f}(\mu) &= \f(\mu) - u(\mu)  \\
			&= \left[ (1-2r) \shent(X_e) + \sum_{i=1}^r \shent(X_e, X_i) \right] - \EE\left[ B(X_e) + \sum_{i=1}^r J(X_e, X_r) \right] \\
			&= \left[ (1-2r) \shent(X_e) - \EE B(X_e) \right] + \sum_{i=1}^r \left[ \shent(X_e, X_i) - \EE J(X_e, X_i)\right] .
	\end{align*}
	So whichever $i\in [r]$ corresponds to the largest term in the sum, we should just use that transition kernel for every edge type to maximize the pressure.
\end{proof}

\begin{cor}
	If $\lwlim(u,\f) = \bar\mu$, then $\bar\mu$ is a mixture of shift-invariant Markov chains which are Gibbs with respect to $u$.
\end{cor}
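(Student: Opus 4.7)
The plan is to combine Theorem~\ref{thm:eqgeneral} with the preceding lemma, and then finish with a critical-point computation to upgrade ``Markov chain'' to ``Gibbs.''

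First I apply Theorem~\ref{thm:eqgeneral}(1) to the random sofic approximation $\Sigma = \f$: from the hypothesis $\lwlim(u,\f) = \bar\mu$ we obtain some $\mathfrak{p} \in \Prob(\Prob^\Gamma(\calX))$ with barycenter $\bar\mu$ such that $\mathfrak{p}$-almost every $\nu$ is $\f$-equilibrium. Since we are in the nearest-neighbor-on-a-topological-Markov-chain setting required by the preceding lemma, each such $\nu$ must be a completely homogeneous, hence shift-invariant, Markov chain. This already gives the ``mixture of shift-invariant Markov chains'' half of the claim.

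It remains to verify that each such $\nu$ is Gibbs with respect to $u$. The space of completely homogeneous Markov chains on $\calX$ is finite-dimensional, parametrized by a row-stochastic kernel $Q$ on $\A$ supported on the allowed pairs of the topological Markov chain, with single-site marginal the stationary distribution of $Q$. Using the Markov-chain formula for $\f$ recalled in Section~2.2 together with the fact that $u(\nu)$ depends only on the joint law of $\mathbf{x}(e), \mathbf{x}(s_1), \dots, \mathbf{x}(s_r)$, the restricted pressure $Q \mapsto \press_\f(\nu_Q)$ is a smooth function on this parameter region. Since $\nu$ is globally $\f$-equilibrium it is in particular a critical point of this restricted pressure, and a direct Lagrange-multiplier computation (the Ising instance of which is precisely the fixed-point equation recalled in Section~\ref{sec:isingintro}, following \cite[Section~12.2]{georgii2011}) shows that the critical-point equations coincide with the DLR single-site conditional equations for a completely homogeneous tree-indexed Markov chain to be Gibbs for $u$; equivalently, they are the belief-propagation fixed-point equations for the nearest-neighbor interaction on the $2r$-regular tree.

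The main obstacle I expect is carrying out this Lagrange computation cleanly in the general nearest-neighbor setting rather than the Ising special case, in particular handling the boundary case in which $Q$ has some zero entries on allowed pairs: there the critical-point equations must be applied on the appropriate face of the parameter simplex, and one must verify that the reduced kernel still satisfies the full DLR equations (equivalently, that any ``forced'' zero of the Markov kernel is consistent with the Gibbs conditionals). In the Ising setting this issue does not arise and the conclusion follows immediately from the explicit one-parameter family $\mu_t$ of Section~\ref{sec:isingintro}.
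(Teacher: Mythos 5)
The first half of your argument is exactly the paper's: apply Theorem~\ref{thm:eqgeneral}(1) to $\Sigma = \f$ to get a barycentric decomposition $\mathfrak{p}$ of $\bar\mu$ into $\f$-equilibrium states, then invoke the preceding lemma to conclude each component is a completely homogeneous (hence shift-invariant) Markov chain. Where you diverge is the Gibbs half. The paper's route is much shorter and avoids any variational computation: the local weak limit $\bar\mu$ is itself an invariant Gibbs measure, and the set of Gibbs measures is a face of the simplex of invariant measures (\cite[Theorem 14.15(c)]{georgii2011}, which transfers from $\ZZ^d$ to general countable groups), so \emph{any} representation of $\bar\mu$ as a mixture of invariant measures must give full weight to Gibbs measures. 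Your route instead tries to prove a Lanford--Ruelle-type statement by hand: global $\f$-pressure maximization implies stationarity of the Bethe free energy on the finite-dimensional Markov family, and stationarity is supposed to be equivalent to the DLR equations via belief propagation.

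That second route has a genuine gap as written. The central step --- that the critical-point equations of the restricted pressure coincide with the DLR conditionals for a general nearest-neighbor interaction on a general topological Markov chain --- is asserted, not carried out; the Ising fixed-point equation from Section~\ref{sec:isingintro} is evidence that such a correspondence holds, but it is not a proof in the general setting the corollary covers. Worse, you yourself identify the boundary case (kernels with zero entries on allowed pairs) as unresolved: a global maximizer sitting on a face of the parameter region need not satisfy the interior first-order conditions, so the chain of implications breaks precisely there. There is also a smaller unaddressed point: shift-invariance of a tree-indexed chain forces a reversibility constraint linking the kernel to its stationary marginal, so the parameter space is not simply ``all row-stochastic kernels supported on allowed pairs,'' and the Lagrange conditions must be taken on that constrained set. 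I would recommend replacing the entire second half with the paper's one-line argument via the face property of the Gibbs simplex; alternatively, note that deducing Gibbsianity from equilibrium is the content of \cite{barbieri2022a}, though one would then need to check that result applies to the random sofic approximation $\f$.
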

\begin{proof}
	The limit $\bar\mu$ is a mixture of $\f$-equilibrium states by Theorem \ref{thm:eqgeneral}. These must be Markov chains by the previous lemma. They must be Gibbs since $\bar\mu$ is an invariant Gibbs measure, and hence cannot be a mixture of non-Gibbs invariant measures \cite[Theorem 14.15(c)]{georgii2011} (note this theorem is stated for actions of $\ZZ^d$ but the same proof works for any countable group).
\end{proof}

Note that the assumption of the previous result holds in particular if the local limit exists over every deterministic sofic approximation, by Lemma \ref{lem:uniquelimits}. This explains the observation made in \cite{montanari2012} that it is possible to identify the limit of Ising models on locally tree-like graphs, not just prove that it exists: as long as the limit does exist (and is independent of the sofic approximation), we have shown here that it must be something simple.


\subsection{Proof of Lemma \ref{lem:hmodproperties}}
\label{sec:lemmaproof}
\subsubsection{Part 1}
Let
		\[  \h_\Sigma^{\ann}(\mu; k) = \limsup_{n \to \infty} \frac{1}{\abs{V_n}} \log \EE_{\sigma_n} \exp \sup\left\{ \shent(\zeta) \st \zeta \in \bOmega(\sigma_n, \ball{\mu}{1/k}) \right\} .\]
	Then, since the radius-$1/k$ balls form a basis for the topology at $\mu$,
	\begin{align*}
		\h_\Sigma^{\ann}(\mu)
			&= \inf_{\calO \ni \mu} \limsup_{n \to \infty} \frac{1}{\abs{V_n}} \log \EE_{\sigma_n} \exp \sup\left\{ \shent(\zeta) \st \zeta \in \bOmega(\sigma_n, \calO) \right\} \\
			&= \lim_{k \to \infty} \h_\Sigma^{\ann}(\mu; k).
	\end{align*}
	For each $n,k$, let $\zeta_{n,k} \in \bOmega(\sigma_n, \overline{\ball{\mu}{1/k}}) \subset \Prob(\A^n)$ such that
		\[ \shent(\zeta_{n,k}) = \sup\{\shent(\zeta) \st \zeta \in \bOmega(\sigma_n, \ball{\mu}{1/k})\} . \]
 Keep in mind that this depends on $\sigma_n$. Then
 			\[ \h_\Sigma^{\ann}(\mu; k) = \limsup_{n \to \infty} \frac{1}{\abs{V_n}} \log \EE_{\sigma_n} \exp \shent(\zeta_{n,k}) . \]
	
	Now let $I$ be a finite set and let $\pi \colon \Prob^\Gamma(\A^\Gamma) \to I$ be a measurable map. We think of this as creating a labeled finite partition of $\Prob^\Gamma(\A^\Gamma)$. For each $i \in I$ let $\zeta_{n,k}^i \in \Prob(\A^n)$ be $\zeta_{n,k}$ conditioned on the set $\{ \mb{x} \in \A^n \st P_{\mb{x}}^{\sigma_n} \in \pi^{-1}\{i\}\}$ (maintaining the implicit dependence on $\sigma_n$). Let
		\[ p_{n,k} = (\mb{x} \mapsto \pi (P^{\sigma_n}_{\mb{x}}))_* \zeta_{n,k} \in \Prob(I) \]
	or, in other words,
		\[ p_{n,k}(i) = \zeta_{n,k} \big\{ \mb{x} \in \A^n \st P_{\mb{x}}^{\sigma_n} \in \pi^{-1}\{i\} \big\} . \]
	Then $\zeta_{n,k} = \sum_{i \in I} p_{n,k}(i) \zeta_{n,k}^i$, and by the chain rule for conditional entropy
		\[ \shent(\zeta_{n,k}) = \sum_{i \in I} p_{n,k}(i) \shent(\zeta_{n,k}^i) + \shent(p_{n,k}) \leq \sum_{i \in I} p_{n,k}(i) \shent(\zeta_{n,k}^i) + \log\abs{I} . \]
	Combining with the above, it follows that
		\[ \h_\Sigma^{\ann}(\mu; k) \leq \limsup_{n \to \infty} \frac{1}{\abs{V_n}} \log \EE_{\sigma_n} \exp \sum_{i \in I} p_{n,k}(i) \shent(\zeta_{n,k}^i) . \]
	
	Let
		\[ \mathfrak{P}_k = \left\{ p \in \Prob(I) \st \exists \left\{ \mu_i \in \overline{\conv}(\pi^{-1}\{i\}) \right\}_{i \in I} \text{ s.t. } \sum_{i \in I} p(i) \mu_i \in \ball{\mu}{1/k} \right\} . \]
	(Here $\overline{\conv}$ is the closed convex hull. Later we will pick $\pi,I$ so that all of these have small diameter.) Then by definition $p_{n,k} \in \mathfrak{P}_k$ for every $n,k$: we can take $\mu_i = P_{\zeta_{n,k}^i}^{\sigma_n}$ since $P_{\zeta_{n,k}}^{\sigma_n} \in \ball{\mu}{1/k}$.
	
	Give $\Prob(I)$ the metric $d(p_1,p_2) = \max_{i \in I} \abs{p_1(i) - p_2(i)}$. Given $\delta>0$, since $\Prob(I)$ is compact we can cover $\mathfrak{P}_k$ with finitely many radius-$\delta$ balls $\{\ball{p_j}{\delta}\}_{j \in J}$ with $p_j \in \mathfrak{P}_k$ for all $j \in J$. Then
	\begin{align*}
		\EE_{\sigma_n} \exp \sum_{i \in I} p_{n,k}(i) \shent(\zeta_{n,k}^i)
			&\leq \EE_{\sigma_n} \sum_{j \in J} \1_{p_{n,k} \in \ball{p_j}{\delta}} \exp \sum_{i \in I} p_{n,k}(i) \shent(\zeta_{n,k}^i) \\
			&\leq \EE_{\sigma_n} \sum_{j \in J} \1_{p_{n,k} \in \ball{p_j}{\delta}}\cdot \exp \sum_{i \in I} (p_j(i)+\delta) \log\abs*{\Omega(\sigma_n, \pi^{-1}\{i\})} \\
			&\leq \EE_{\sigma_n} \sum_{j \in J} 1\cdot \exp \sum_{i \in I} (p_j(i)+\delta) \log\abs*{\Omega(\sigma_n, \pi^{-1}\{i\})} \\
			&= \sum_{j \in J} \EE_{\sigma_n} \exp \sum_{i \in I} (p_j(i)+\delta) \log\abs*{\Omega(\sigma_n, \pi^{-1}\{i\})} \\
			&\leq \abs{J} \max_{j \in J} \EE_{\sigma_n} \exp \sum_{i \in I} (p_j(i)+\delta) \log\abs*{\Omega(\sigma_n, \pi^{-1}\{i\})}
	\end{align*}
	so
	\begin{align*}
		\h_\Sigma^{\ann}(\mu;k)
			&\leq \limsup_{n \to \infty} \frac{1}{\abs{V_n}} \log \left(\abs{J} \max_{j \in J} \EE_{\sigma_n} \prod_{i \in I} \abs*{\Omega(\sigma_n, \pi^{-1}\{i\})}^{p_j(i)+\delta} \right) \\
			&= \max_{j \in J} \limsup_{n \to \infty} \frac{1}{\abs{V_n}} \log \EE_{\sigma_n} \prod_{i \in I} \abs*{\Omega(\sigma_n, \pi^{-1}\{i\})}^{p_j(i)+\delta} \\
			&= \sup_{p \in \mathfrak{P}_k} \limsup_{n \to \infty} \frac{1}{\abs{V_n}} \log \EE_{\sigma_n} \prod_{i \in I} \abs*{\Omega(\sigma_n, \pi^{-1}\{i\})}^{p(i)+\delta} \\
			&\leq \sup_{p \in \mathfrak{P}_k} \limsup_{n \to \infty} \frac{1}{\abs{V_n}} \log \prod_{i \in I} \left(\EE_{\sigma_n} \abs*{\Omega(\sigma_n, \pi^{-1}\{i\})}^{1+\delta/p(i)} \right)^{p(i)} \tag{H\"older} \\
			&\leq \sup_{p \in \mathfrak{P}_k} \limsup_{n \to \infty} \frac{1}{\abs{V_n}} \sum_{i \in I} p(i) \log \EE_{\sigma_n} \left(\abs*{\Omega(\sigma_n, \pi^{-1}\{i\})}\cdot \abs{\A^n}^{\delta/p(i)} \right) \\
			&\leq \sup_{p \in \mathfrak{P}_k} \sum_{i \in I} p(i) \limsup_{n \to \infty} \frac{1}{\abs{V_n}} \log \EE_{\sigma_n} \abs*{\Omega(\sigma_n, \pi^{-1}\{i\})} + \delta \abs{I} \log\abs{\A} .
	\end{align*}
	We can then take $\delta$ to 0. Writing
		\[ F(\pi^{-1}\{i\}) \coloneqq \limsup_{n \to \infty} \frac{1}{\abs{V_n}} \log \EE_{\sigma_n} \abs*{\Omega(\sigma_n, \pi^{-1}\{i\})} , \]
	we have that for any $I, \pi$
		\[ \h_\Sigma^{\ann}(\mu) \leq \lim_{k \to \infty} \sup_{p \in \mathfrak{P}_k} \sum_{i \in I} p(i) F(\pi^{-1}\{i\}) . \]
	Note that the supremum is decreasing in $k$, so the limit exists. For each $k$ we can choose $p_k \in \mathfrak{P}_k$  which gets within $\varepsilon$ of the supremum, then pick a convergent subsequence $p_{k_n} \to p \in \Prob(I)$ to get
		\[ \h_\Sigma^{\ann}(\mu) \leq \sup_{p \in \mathfrak{P}_I} \sum_{i \in I} p(i) F(\pi^{-1}\{i\}) \tag{*} \]
	where
		\[ \mathfrak{P}_I = \{ p \in \Prob(I) \st \exists \{\mu_i \in \overline{\conv}(\pi^{-1}\{i\}) \}_{i \in I} \text{ s.t. } \sum_{i \in I} p(i) \mu_i = \mu \}, \]
	and moreover there is some $p_I \in \mathfrak{P}_I$ which attains the supremum.
	
	Now take some sequence $I_1, I_2, \ldots$ with maps $\pi_k \colon \Prob^\Gamma(\A^\Gamma) \to I_k$ for each $k \in \NN$ such that $\max_{i \in I_k} \diam(\overline{\conv}(\pi_{k}^{-1}\{i\})) < 1/k$. For each $k$ and $i \in I_k$ let $\nu_{k,i} \in \overline{\pi_k^{-1}\{i\}}$ attain $\h_\Sigma(\nu_{k,i}) = F(\pi_k^{-1}\{i\})$. Define
		\[ \mathfrak{p}_k = \sum_{i \in I_k} p_{I_k}(i) \delta_{\nu_{k,i}} \in \Prob(\Prob^\Gamma(\A^\Gamma)) . \]
	By passing to a subsequence we can assume that $\{\mathfrak{p}_k\}$ has some weak limit $\mathfrak{p}$. By (*), definition of $\mathfrak{p}_k$, and using that $\h_\Sigma$ is upper semicontinuous,
		\[ \h_\Sigma^{\ann}(\mu) \leq \lim_{k \to \infty} \int \h_\Sigma \, d\mathfrak{p}_k
			\leq \int \h_\Sigma\, d\mathfrak{p}. \]
	
	We now show that $\mathfrak{p}$ has barycenter $\mu$; the result then follows, with the comment about the case when $\Sigma$ is \good{} following from Lemma~\ref{lem:sewgen}. First suppose that $g \colon \A^\Gamma \to \RR$ is 1-Lipschitz. For each $k \in \NN$ and $i \in I_k$ pick $\mu_{k,i} \in \overline{\conv}(\pi_k^{-1}\{i\})$ such that $\sum_{i \in I_k} p_{I_k}(i) \mu_{k,i} = \mu$ (using that $p_{I_k} \in \mathfrak{P}_{I_k}$). Then for each $k$ we have
		\[ \int \nu(g)\, \mathfrak{p}_k (d\nu) = \sum_{i \in I_k} p_{I_k}(i) \nu_{k,i}(g) \overset{1/k}{\approx} \sum_{i \in I_{k}} p_{I_k}(i) \mu_{k,i}( g) = \mu g . \]
	Taking $k$ to infinity, we get
		\[ \lim_{k \to \infty} \int \nu(g)\, \mathfrak{p}_k(d\nu) = \mu g . \]
	But since $\nu \mapsto \nu(g)$ is continuous and $\mathfrak{p}_k \to \mathfrak{p}$, this shows that $\int \nu (g)\, d\mathfrak{p} = \mu (g)$ for any 1-Lipschitz $g$. By monotone convergence, the same is true for any continuous $g$, and $\mu$ is the barycenter of $\mathfrak{p}$. \\

	If $\Sigma$ is \good{}, we can take $\mathfrak{p}$ to be the ergodic decomposition of $\mu$ by the following lemma:
\begin{lemma}
\label{lem:sewgen}
	Suppose $\mathfrak{p} \in \Prob(\Prob^\Gamma(\A^\Gamma))$ has barycenter $\mu$, and for each $\nu \in \Prob^\Gamma(\A^\Gamma)$ let $\mathfrak{e}_\nu \in \Prob(\Prob^\Gamma(\A^\Gamma))$ be its ergodic decomposition. Then for any \good{} $\Sigma$
		\[ \int \h_\Sigma(\nu)\, \mathfrak{p}(d\nu) \leq \int \h_\Sigma(\nu)\, \mathfrak{e}_\mu(d\nu) . \]
\end{lemma}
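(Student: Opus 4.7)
The plan is to reduce everything to a statement about uniqueness of the ergodic decomposition. The definition of ``\good{}'' gives us, pointwise in $\nu$, the inequality
\[ \h_\Sigma(\nu) \leq \int \h_\Sigma(\nu')\, \mathfrak{e}_\nu(d\nu'). \]
Integrating both sides against $\mathfrak{p}(d\nu)$ and swapping the order of integration (justified by Tonelli, since $\h_\Sigma$ is upper semicontinuous, hence Borel measurable and bounded above by $\log\abs{\A}$), I would obtain
\[ \int \h_\Sigma(\nu)\, \mathfrak{p}(d\nu) \leq \int \h_\Sigma(\nu')\, \tilde{\mathfrak{p}}(d\nu'), \]
where $\tilde{\mathfrak{p}} \in \Prob(\Prob^\Gamma(\A^\Gamma))$ is defined by $\tilde{\mathfrak{p}}(E) = \int \mathfrak{e}_\nu(E)\, \mathfrak{p}(d\nu)$.

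The remaining task is to identify $\tilde{\mathfrak{p}}$ with $\mathfrak{e}_\mu$. By the characterization of the ergodic decomposition recalled before Theorem~\ref{thm:eqgeneral}, it suffices to show (i) $\tilde{\mathfrak{p}}$ gives full measure to the set of ergodic shift-invariant measures, and (ii) $\tilde{\mathfrak{p}}$ has barycenter $\mu$. Property (i) is immediate because, by definition, each $\mathfrak{e}_\nu$ is supported on ergodic measures. For (ii), if $f \colon \A^\Gamma \to \RR$ is continuous, then evaluation $\mathrm{ev}_f \colon \nu \mapsto \nu(f)$ is continuous on $\Prob^\Gamma(\A^\Gamma)$, and applying Tonelli together with the defining property of the ergodic decomposition gives
\[ \int \nu'(f)\, \tilde{\mathfrak{p}}(d\nu') = \int \! \int \nu'(f)\, \mathfrak{e}_\nu(d\nu')\, \mathfrak{p}(d\nu) = \int \nu(f)\, \mathfrak{p}(d\nu) = \mu(f), \]
which is exactly the statement that $\mu$ is the barycenter of $\tilde{\mathfrak{p}}$.

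Once these two properties are established, uniqueness of the ergodic decomposition yields $\tilde{\mathfrak{p}} = \mathfrak{e}_\mu$, and substituting into the inequality above completes the proof. The only mildly delicate point is ensuring the measurability required to apply Tonelli: specifically, that $\nu \mapsto \mathfrak{e}_\nu(E)$ is Borel measurable for each Borel $E$, which is a standard consequence of the existence of the ergodic decomposition as a measurable disintegration on the Polish space $\Prob^\Gamma(\A^\Gamma)$. Beyond this, the argument is essentially bookkeeping, so I do not anticipate a substantial obstacle.
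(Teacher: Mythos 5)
Your proof is correct and follows essentially the same route as the paper: apply the \good{} inequality pointwise, integrate against $\mathfrak{p}$, and identify the resulting mixture with $\mathfrak{e}_\mu$ via its barycenter, its concentration on ergodic measures, and uniqueness of the ergodic decomposition. The only difference is presentational — you build $\tilde{\mathfrak{p}}$ directly from the kernel $\nu \mapsto \mathfrak{e}_\nu$ (deferring measurability to the standard disintegration fact), whereas the paper constructs the same measure via the Riesz representation theorem and then checks $\theta(\mathfrak{E})=1$ by a $G_\delta$ approximation.
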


\begin{proof}
	Note $\int \h_\Sigma(q)\, \mathfrak{e}_\nu(dq)$ is a measurable function of $\nu$, for example using \cite[Proposition 11.1(ii)]{phelps2001} and that $\h_\Sigma$ is upper semicontinuous. Therefore, using that $\Sigma$ is \good{}, we can write
	\begin{align*}
		\int \h_\Sigma(\nu)\, \mathfrak{p}(d\nu)
			&\leq \int \left[ \int \h_\Sigma(q)\, \mathfrak{e}_\nu(dq) \right] \mathfrak{p}(d\nu).
	\end{align*}
	
	Now $g \mapsto  \int \left[ \int g(q)\, \mathfrak{e}_\nu(dq) \right] \mathfrak{p}(d\nu)$ is a positive linear functional on $C(\Prob^\Gamma(\A^\Gamma))$ so by Riesz representation there is some $\theta \in \Prob(\Prob^\Gamma(\A^\Gamma))$ such that
		\[  \int \left[ \int G(q)\, \mathfrak{e}_\nu(dq) \right] \mathfrak{p}(d\nu) = \int G(\nu)\, \theta(d\nu) \]
	for all $G \in C(\Prob^\Gamma(\A^\Gamma))$. The same holds for semicontinuous $g$ by monotone convergence; in particular it holds for $\h_\Sigma$.
	
	Now for any $g \in C(\A^\Gamma)$, since $\nu \mapsto \nu(g)$ is in $C(\Prob^\Gamma(\A^\Gamma))$ we have
	\begin{align*}
		\int \nu(g)\, \theta(d\nu)
			&= \int \left[ \int q(g)\, \mathfrak{e}_\nu(dq) \right] \mathfrak{p}(d\nu) \\
			&= \int \left[ \nu(g) \right] \mathfrak{p}(d\nu) \tag{barycenter of $\mathfrak{e}_\nu$ is $\nu$} \\
			&= \mu(g) \tag{barycenter of $\mathfrak{p}$ is $\mu$}
	\end{align*}
	Therefore the barycenter of $\theta$ is $\mu$.
	
	Let $\mathfrak{E}$ be the set of ergodic measures in $\Prob^\Gamma(\A^\Gamma)$. We now show that $\theta(\mathfrak{E}) = 1$, which by uniqueness implies $\theta = \mathfrak{e}_\mu$ (see for example the last theorem (unnumbered) in Section 12 of \cite{phelps2001} for the uniqueness theorem in this form). Since $\mathfrak{E}$ is a $G_\delta$ set, there exist open sets $G_1 \supset G_2 \supset \ldots$ such that $\mathfrak{E} = \bigcap_{n=1}^\infty G_n$. For each $n$ let $F_n \in C(\Prob^\Gamma(\A^\Gamma))$ be given by $F_n(\nu) = \bar{d}(\nu, G_n^c)$. Then $F_n \searrow \1_{\mathfrak{E}^c}$ so, using monotone convergence and the definition of $\theta$,
	\begin{align*}
		\theta(\mathfrak{E}^c)
			&= \lim_{n \to \infty} \int F_n\, d\theta \\
			&= \lim_{n \to \infty} \int \left[ \int F_n(g)\, \mathfrak{e}_\nu(dq) \right] \mathfrak{p}(d\nu) \\
			&= \int \left[ \int \1_{g \in \mathfrak{E}^c}\, \mathfrak{e}_\nu(dq) \right] \mathfrak{p}(d\nu) \\
			&= \int \left[ \mathfrak{e}_\nu(\mathfrak{E}^c) \right] \mathfrak{p}(d\nu) \\
			&= 0
	\end{align*}
	since $\mathfrak{e}_\nu(\mathfrak{E}^c) = 0$ for every $\nu$.
\end{proof}

\subsubsection{Part 2}
Since $\Prob(\A^\Gamma)$ is locally convex, we can assume the infimum in the definition of $\h_\Sigma^{\ann}(\mu)$ is over convex neighborhoods $\calO$. For any such $\calO$, we have
	\[ \exp \sup \{ \shent(\zeta) \st \zeta \in \bOmega(\sigma_n, \calO) \} \geq \abs*{\Omega(\sigma_n, \calO)}.\]
If $\Omega(\sigma_n, \calO)$ is empty then this is certainly true; otherwise, with $\zeta = \Unif(\Omega(\sigma_n, \calO))$ we have $\shent(\zeta) = \abs*{\Omega(\sigma_n, \calO)}$ and $\zeta \in \bOmega(\sigma_n, \calO)$ by convexity. Part 2 follows. \\

\subsubsection{Part 3}
Since $\lwlim(u,\Sigma) = \bar\mu$, for any $\calO \ni \bar\mu$ and $c>0$ for all small enough $\calU \supset \Prob^\Gamma(\calX)$ we have
	\[ \PP_{\sigma_n} \{\xi_n^\calU \not\in \bOmega(\sigma_n, \calO)\} \leq e^{-c \abs*{V_n}}  \]
for all large enough $n$.
So, using that $\xi_n^\calU$ minimizes $A$ on $\sigma_n$ among measures supported on microstates with empirical distribution in $\calU$, for any $\nu \in \Prob^\Gamma(\calX)$ and $\calO \ni \bar\mu$, for all small enough $\calU \supset \Prob^\Gamma(\calX)$ we have
\begin{align*}
	\press^{\ann}_{\Sigma}(\nu)
		&= \lim_{ \calV \downarrow \nu} \limsup_{n \to \infty} \frac{1}{\abs*{V_n}} \log \EE_{\sigma_n} \exp \sup \left\{ \widetilde{\Press}(\zeta) \st \zeta \in \bOmega(\sigma_n, \calV) \right\} \\
		&\leq \limsup_{n \to \infty} \frac{1}{\abs*{V_n}} \log \EE_{\sigma_n} \exp( \widetilde{\Press}(\xi_n^\calU) ) \\
		&= \limsup_{n \to \infty} \frac{1}{\abs*{V_n}} \log \EE_{\sigma_n} \left( \1_{\{\xi_n^\calU \in \bOmega(\sigma_n, \calO)\}} \exp( \widetilde{\Press}(\xi_n^\calU) ) + \1_{\{\xi_n^\calU \not\in \bOmega(\sigma_n, \calO)\}}\exp( \widetilde{\Press}(\xi_n^\calU) ) \right) \\
		&\leq \limsup_{n \to \infty} \frac{1}{\abs*{V_n}} \log \left( \EE_{\sigma_n} \exp \sup\{ \widetilde{\Press}(\zeta) \st \zeta \in \bOmega(\sigma_n, \calO) \} + e^{-c\abs*{V_n}} \exp( \log |\A|^{\abs*{V_n}} - \abs*{V_n}\cdot \tilde{u}^{min}  )\right) \\
		&= \max \left\{ \limsup_{n \to \infty} \frac{1}{\abs*{V_n}} \log \EE_{\sigma_n} \exp \sup\{ \widetilde{\Press}(\zeta) \st \zeta \in \bOmega(\sigma_n, \calO) \},\ \log|\A| - \tilde{u}^{min} - c \right\} . 
\end{align*}
We can then send $c$ to infinity and $\calO \downarrow \bar\mu$ to get $\press_\Sigma^{\ann}(\nu) \leq \press_\Sigma^{\ann}(\bar\mu)$.

For the second claim, note that we have just shown that for any $\nu \in \Prob^\Gamma(\calX)$
	\[ \press_\Sigma^{\ann}(\nu) \leq \lim_{\calU \downarrow \Prob^\Gamma(\calX)}\limsup_{n \to \infty} \frac{1}{\abs{V_n}} \log \EE_{\sigma_n} \exp( \widetilde{\Press}(\xi_n^\calU) ) \leq \press_\Sigma^{\ann}(\bar\mu) . \]
Taking $\nu = \bar\mu$ gives the result.

\bibliographystyle{alpha}
\bibliography{refs}

\newcommand{\etalchar}[1]{$^{#1}$}
\begin{thebibliography}{CGG{\etalchar{+}}23}

\bibitem[ABL22]{airey2022}
Dylan Airey, Lewis Bowen, and Yuqing Lin.
\newblock A topological dynamical system with two different positive sofic
  entropies.
\newblock {\em Transactions of the American Mathematical Society, Series B},
  9(2):35--98, February 2022.

\bibitem[Alp16]{alpeev2016}
A.~Alpeev.
\newblock The {{Entropy}} of {{Gibbs Measures}} on {{Sofic Groups}}.
\newblock {\em Journal of Mathematical Sciences}, 215(6):649--658, June 2016.

\bibitem[Aus16]{austin2016}
Tim Austin.
\newblock Additivity {{Properties}} of {{Sofic Entropy}} and {{Measures}} on
  {{Model Spaces}}.
\newblock {\em Forum of Mathematics, Sigma}, 4, 2016.

\bibitem[BBS22]{backhausz2022}
{\'A}gnes Backhausz, Charles Bordenave, and Bal{\'a}zs Szegedy.
\newblock Typicality and entropy of processes on infinite trees.
\newblock {\em Annales de l'Institut Henri Poincar\'e, Probabilit\'es et
  Statistiques}, 58(4), November 2022.

\bibitem[BM22]{barbieri2022a}
Sebasti{\'a}n Barbieri and Tom Meyerovitch.
\newblock The {{Lanford}}\textendash{{Ruelle}} theorem for actions of sofic
  groups.
\newblock {\em Transactions of the American Mathematical Society}, November
  2022.

\bibitem[Bow10a]{bowen2010c}
Lewis Bowen.
\newblock The ergodic theory of free group actions: Entropy and the
  f-invariant.
\newblock {\em Groups, Geometry, and Dynamics}, pages 419--432, 2010.

\bibitem[Bow10b]{bowen2010}
Lewis Bowen.
\newblock A measure-conjugacy invariant for free group actions.
\newblock {\em Annals of Mathematics}, 171(2):1387--1400, March 2010.

\bibitem[Bow10c]{bowen2010a}
Lewis Bowen.
\newblock Measure conjugacy invariants for actions of countable sofic groups.
\newblock {\em Journal of the American Mathematical Society}, 23(1):217--217,
  January 2010.

\bibitem[Bow10d]{bowen2010b}
Lewis Bowen.
\newblock Non-abelian free group actions: {{Markov}} processes, the
  {{Abramov}}\textendash{{Rohlin}} formula and {{Yuzvinskii}}'s formula.
\newblock {\em Ergodic Theory and Dynamical Systems}, 30(6):1629--1663,
  December 2010.

\bibitem[Bow11]{bowen2011}
Lewis Bowen.
\newblock Entropy for expansive algebraic actions of residually finite groups.
\newblock {\em Ergodic Theory and Dynamical Systems}, 31(03):703--718, June
  2011.

\bibitem[Bow12]{bowen2012}
Lewis Bowen.
\newblock Sofic entropy and amenable groups.
\newblock {\em Ergodic Theory and Dynamical Systems}, 32(2):427--466, April
  2012.

\bibitem[Bow19]{bowen2019}
Lewis~P. Bowen.
\newblock A {{BRIEF INTRODUCTION TO SOFIC ENTROPY THEORY}}.
\newblock In {\em Proceedings of the {{International Congress}} of
  {{Mathematicians}} ({{ICM}} 2018)}, pages 1847--1866, {Rio de Janeiro,
  Brazil}, May 2019. {WORLD SCIENTIFIC}.

\bibitem[Bow20]{bowen2020}
Lewis Bowen.
\newblock Examples in the entropy theory of countable group actions.
\newblock {\em Ergodic Theory and Dynamical Systems}, 40(10):2593--2680,
  October 2020.

\bibitem[BPS95]{burton1995}
Robert~M. Burton, C.~E. Pfister, and Jeffrey~E. Steif.
\newblock The variational principle for {{Gibbs}} states fails on trees.
\newblock {\em Markov Processes and Related Fields}, 1:387--406, 1995.

\bibitem[BRZ95]{bleher1995}
P.~M. Bleher, J.~Ruiz, and V.~A. Zagrebnov.
\newblock On the purity of the limiting gibbs state for the {{Ising}} model on
  the {{Bethe}} lattice.
\newblock {\em Journal of Statistical Physics}, 79(1-2):473--482, April 1995.

\bibitem[CGG{\etalchar{+}}23]{coja-oghlan2023}
Amin {Coja-Oghlan}, Andreas Galanis, Leslie~Ann Goldberg, Jean~Bernoulli
  Ravelomanana, Daniel {\v S}tefankovi{\v c}, and Eric Vigoda.
\newblock Metastability of the {{Potts Ferromagnet}} on {{Random Regular
  Graphs}}.
\newblock {\em Communications in Mathematical Physics}, February 2023.

\bibitem[Chu13]{chung2013}
Nhan-Phu Chung.
\newblock Topological pressure and the variational principle for actions of
  sofic groups.
\newblock {\em Ergodic Theory and Dynamical Systems}, 33(5):1363--1390, October
  2013.

\bibitem[Coh13]{cohn2013}
Donald~L. Cohn.
\newblock {\em Measure {{Theory}}}.
\newblock Birkh\"auser {{Advanced Texts Basler Lehrb\"ucher}}. {Springer New
  York}, {New York, NY}, 2013.

\bibitem[CT06]{cover2006}
T.~M. Cover and Joy~A. Thomas.
\newblock {\em Elements of Information Theory}.
\newblock {Wiley-Interscience}, {Hoboken, NJ}, second edition, 2006.

\bibitem[DM10]{dembo2010a}
Amir Dembo and Andrea Montanari.
\newblock Ising models on locally tree-like graphs.
\newblock {\em The Annals of Applied Probability}, 20(2):565--592, April 2010.

\bibitem[DMS17]{dembo2017}
Amir Dembo, Andrea Montanari, and Subhabrata Sen.
\newblock Extremal cuts of sparse random graphs.
\newblock {\em The Annals of Probability}, 45(2):1190--1217, March 2017.

\bibitem[FS77]{follmer1977}
H.~F{\"o}llmer and J.~L. Snell.
\newblock An "{{Inner}}" {{Variational Principle}} for {{Markov Fields}} on a
  {{Graph}}.
\newblock {\em Zeitschrift f\"ur Wahrscheinlichkeitstheorie und Verwandte
  Gebiete}, 39(3):187--195, 1977.

\bibitem[Geo11]{georgii2011}
Hans-Otto Georgii.
\newblock {\em Gibbs Measures and Phase Transitions}.
\newblock Number~9 in De {{Gruyter}} Studies in Mathematics. {de Gruyter},
  {Berlin}, second edition, 2011.

\bibitem[KRK14]{kulske2014}
C.~K{\"u}lske, U.~A. Rozikov, and R.~M. Khakimov.
\newblock Description of the {{Translation-Invariant Splitting Gibbs Measures}}
  for the {{Potts Model}} on a {{Cayley Tree}}.
\newblock {\em Journal of Statistical Physics}, 156(1):189--200, July 2014.

\bibitem[MMS12]{montanari2012}
Andrea Montanari, Elchanan Mossel, and Allan Sly.
\newblock The weak limit of {{Ising}} models on locally tree-like graphs.
\newblock {\em Probability Theory and Related Fields}, 152(1-2):31--51,
  February 2012.

\bibitem[MO85]{moulinollagnier1985}
Jean Moulin~Ollagnier.
\newblock {\em Ergodic {{Theory}} and {{Statistical Mechanics}}}, volume 1115
  of {\em Lecture {{Notes}} in {{Mathematics}}}.
\newblock {Springer Berlin Heidelberg}, {Berlin, Heidelberg}, 1985.

\bibitem[NSZ22]{nam2022}
Danny Nam, Allan Sly, and Lingfu Zhang.
\newblock Ising {{Model}} on {{Trees}} and {{Factors}} of {{IID}}.
\newblock {\em Communications in Mathematical Physics}, 389(2):1009--1046,
  January 2022.

\bibitem[Phe01]{phelps2001}
Robert~R. Phelps, editor.
\newblock {\em Lectures on {{Choquet}}'s {{Theorem}}}, volume 1757 of {\em
  Lecture {{Notes}} in {{Mathematics}}}.
\newblock {Springer Berlin Heidelberg}, {Berlin, Heidelberg}, 2001.

\bibitem[Sew16]{seward2016}
Brandon Seward.
\newblock Finite entropy actions of free groups, rigidity of stabilizers, and a
  {{Howe-Moore}} type phenomenon.
\newblock {\em Journal d'Analyse Math\'ematique}, 129(1):309--340, July 2016.

\bibitem[Sew17]{seward2017}
Brandon Seward.
\newblock Weak containment and {{Rokhlin}} entropy.
\newblock May 2017.

\bibitem[Shr21]{shriver2021}
Christopher Shriver.
\newblock Metastability and maximal-entropy joinings of {{Gibbs}} measures on
  finitely-generated groups.
\newblock {\em arXiv:2103.04467 [math-ph]}, March 2021.

\bibitem[Shr22]{shriver2022a}
Christopher Shriver.
\newblock Free {{Energy}}, {{Gibbs Measures}}, and {{Glauber Dynamics}} for
  {{Nearest-Neighbor Interactions}}.
\newblock {\em Communications in Mathematical Physics}, November 2022.

\bibitem[Tem84]{tempelman1984}
A.~A. Tempelman.
\newblock Specific characteristics and variational principle for homogeneous
  random fields.
\newblock {\em Zeitschrift f\"ur Wahrscheinlichkeitstheorie und Verwandte
  Gebiete}, 65(3):341--365, 1984.

\bibitem[{Wol}21]{Mathematica}
{Wolfram Research, Inc.}
\newblock Mathematica, {{Version}} 13.0.0, 2021.

\end{thebibliography}

\end{document}